\documentclass[12pt]{amsart}
\addtolength{\hoffset}{- 0.5in}
\addtolength{\voffset}{-0.8in}
\addtolength{\textwidth}{1.3in}
\addtolength{\textheight}{1.4in}
\usepackage{amscd}
\usepackage[mathscr]{eucal}
\usepackage{amsfonts}
\usepackage{amsmath}
\usepackage{amsthm}
\usepackage{amssymb}
\usepackage{latexsym}
\usepackage{tabularx,array}
\newfont{\msam}{msam10}

\newtheorem{theorem}[]{Theorem}
\newtheorem{proposition}[]{Proposition}
\newtheorem{corollary}[]{Corollary}
\newtheorem{lemma}[]{Lemma}
\theoremstyle{definition}

\pagestyle{plain}
\newtheorem{prop}[]{Proposition}

\let\nc\newcommand

\nc{\la}{\label}

\def\bthm{\begin{theorem}}
\def\ethm{\end{theorem}}
\def\blemma{\begin{lemma}}
\def\elemma{\end{lemma}}
\def\bproof{\begin{proof}}
\def\eproof{\end{proof}}
\def\bprop{\begin{proposition}}
\def\eprop{\end{proposition}}
\def\bcor{\begin{corollary}}
\def\ecor{\end{corollary}}

\def\Z{\mathbb{Z}}
\def\R{\mathbb{R}}

\def\V{\mathcal{V}}

\def\A{\mathbb{A}}

\def\K{\mathbb{K}}

\def\c{\mathbb{C}}

\nc{\Hom}{{\rm{Hom}}}
\nc{\Ext}{{\rm{Ext}}}
\nc{\HOM}{\underline{\rm{Hom}}}
\nc{\EXT}{\underline{\rm{Ext}}}
\nc{\TOR}{\underline{\rm{Tor}}}
\nc{\End}{{\rm{End}}}
\nc{\GL}{{\rm{GL}}}
\nc{\PGL}{{\rm{PGL}}}
\nc{\SL}{{\rm{SL}}}
\nc{\PSL}{{\rm{PSL}}}
\nc{\Rep}{{\rm{Rep}}}
\nc{\ad}{{\rm{ad}}}
\nc{\dlim}{\varinjlim}

\newcommand{\rar}{\rightarrow}
\newcommand{\dk}{\mathbb D_2}
\newcommand{\hb}{\hbar}
\newcommand{\hh}{\text{HH}}
\newcommand{\W}{\mathbb W}
\newcommand{\Ww}{\mathcal W}
\newcommand{\Aa}{\mathcal A}
\newcommand{\Aad}{{\mathcal A}_{\text{Dunkl}}}
\newcommand{\C}{\mathcal C}
\newcommand{\cH}{\mathcal H}

\begin{document}

\title{Hochschild (co)homology of the Dunkl operator quantization of $\Z_2$-singularity.}
\author{Ajay Ramadoss}
\address{Departement Mathematik,
 ETH Z\"{u}rich, R\"{a}mistrasse 101, 8092 Z\"{u}rich}
\email{ajay.ramadoss@math.ethz.ch}
\author{Xiang Tang}
\address{Department of Mathematics, Washington University, St.
Louis, MO 63130, USA} \email{xtang@math.wustl.edu}
\maketitle

\begin{abstract}We study Hochschild (co)homology groups of the Dunkl
operator quantization of $\Z_2$-singularity constructed by Halbout and
Tang. Further, we study traces on this algebra and prove a local
algebraic index formula.
\end{abstract}

\section{Introduction}

Quantizations of symplectic orbifolds were constructed by many
authors, i.e. \cite{MM}, \cite{P}, and \cite{T}. The main idea in
the constructions is that the classical quantization methods on a
symplectic manifold $M$ can be made equivariantly with respect to the action of a
finite group $G$, and therefore can be generalized to the
orbifold $M/G$. Let us denote the algebra of quantized functions by
$\A_{M/G}((\hbar))$. With the efforts of many authors, i.e.
\cite{DE}, \cite{NPPT}, \cite{PPT}, \cite{PPT2}, \cite{PPTT}, we
have come to understand the Hochschild and cyclic (co)homology groups of and
algebraic index theory on $\A_{M/G}((\hbar))$.\\

In \cite{HT}, modeled by the Dunkl operator, Halbout and Tang
constructed an algebra $\mathfrak{A}_{M/\Z_2}((\hbar_1))((\hbar_2))$ as an interesting deformation of
the algebra $\A_{M/\Z_2}((\hbar))$ when $G=\Z_2$. This
deformation is a global version of a symplectic reflection algebra
introduced by Etingof and Ginzburg \cite{EG}. We devote this paper
to study Hochschild (co)homology groups of the algebra $\mathfrak{A}_{M/\Z_2}((\hbar_1))((\hbar_2))$. Such study could not only help in
understanding the algebra $\mathfrak{A}_{M/\Z_2}((\hbar_1))((\hbar_2))$, but also shed light on the relationship between
this algebra and the geometry of the singularity. Our hope is that our
efforts in this paper could eventually lead to a full generalization
of the algebra $\mathfrak{A}_{M/\Z_2}((\hbar_1))((\hbar_2))$ to
general $A_n$ type singularities.\\

Halbout and Tang's original construction (See Section 4
of~\cite{HT}) of $\mathfrak{A}_{M/\Z_2}((\hbar_1))((\hbar_2))$ is
through a global gluing procedure. Such a global construction makes
it difficult to compute the cohomology groups of this algebra. Our
solution to this problem is construct a presheaf of algebras on
$M/\Z_2$, the space of global sections of which defines the algebra
$\mathfrak{A}_{M/\Z_2}((\hbar_1))((\hbar_2))$. This crucial
improvement gives the key to compute the Hochschild homology of the
algebra $\mathfrak {A}_{M/\Z_2}((\hbar_1))((\hbar_2))$. It turns out
to coincide with shifted Chen-Ruan cohomology groups of $M/\Z_2$
with coefficients in $\c((\hb_1))((\hb_2))$. To compute the
Hochschild cohomology
$\text{HH}^{\bullet}(\mathfrak{A}_{M/\Z_2}((\hbar_1))((\hbar_2)),\mathfrak{A}_{M/\Z_2}((\hbar_1))((\hbar_2)))$
of the algebra $\mathfrak{A}_{M/\Z_2}((\hbar_1))((\hbar_2))$, we
were first tempted to apply Van Den Bergh duality \cite{Vdb1},
\cite{Vdb2} in the framework of bornological algebras following
\cite{DE}. However, it is not clear that even in this context,
$\mathfrak{A}_{M/\Z_2}((\hbar_1))((\hbar_2))$ has a resolution by
finitely generated bimodules. We therefore, proceed to prove that
there exists $\theta \in
\text{HH}_{2n}(\mathfrak{A}_{M/\Z_2}((\hbar_1))((\hbar_2)))$ such
that the cap product $$\theta \cap -:
\text{HH}^{\bullet}(\mathfrak{A}_{M/\Z_2}((\hbar_1))((\hbar_2)),
\mathfrak{A}_{M/\Z_2}((\hbar_1))((\hbar_2))) \rar
\text{HH}_{2n-\bullet}(\mathfrak{A}_{M/\Z_2}((\hbar_1))((\hbar_2)))$$
is an isomorphism of $\c((\hb_1))((\hb_2))$-modules. This is proven
by verifying a similar assertion for the algebra of quantum
functions on $M/\Z_2$.

With a good understanding of the Hochschild (co)homology groups of
the algebra $\mathfrak{A}_{M/\Z_2}((\hbar_1))((\hbar_2))$, we study
traces on this algebra, together with a related local algebraic
index theorem. Our main new discovery is an explicit trace formula
on the Dunkl-Weyl algebra $\dk((\hbar_1))((\hbar_2))$ generalizing
Fedosov's $\gamma$-trace \cite{PPT}. Using this local trace, we
define an global trace on the algebra
$\mathfrak{A}_{M/\Z_2}((\hbar_1))((\hbar_2))$ following the idea of
\cite{PPT}. We prove a local index formula for the evaluation of
this trace on $1$. This local index formula may be viewed as a
homological detector of the fact that
$\mathfrak{A}_{M/\Z_2}((\hb_1))((\hb_2))$ is a second quantization
of the ring of quantum functions on $M/\Z_2$. By the Hochschild
homology of $\mathfrak{A}_{M/\Z_2}((\hbar_1))((\hbar_2))$, we know
that there should be one more trace on this algebra that forms a
$\c((\hb_1))((\hb_2))$-basis for the space of traces on
$\mathfrak{A}_{M/\Z_2}((\hbar_1))((\hbar_2))$ along with the trace
that we have explicitly constructed. Unfortunately, we have not been
able to give a local explicit construction of the latter trace.
Instead, we succeed in giving an abstract construction of this trace
using functors in the derived category of sheaves. It would be very
interesting to have a better understanding of this trace, and we
plan to come back to this question in the future.

\subsection{Organization of this paper.} This paper is organized as follows. In Section 2, we study
properties of the Dunkl-Weyl algebra $\dk((\hbar_1))((\hbar_2))$. We
compute its Hochschild (co)homology groups. We prove an explicit trace formula on
$\dk((\hbar_1))((\hbar_2))$. In Section 3, we compute the Hochschild
(co)homology groups of the algebra $\mathfrak{A}_{M/\Z_2}((\hbar_1))((\hbar_2))$ with a refined construction of
this algebra. By the end of Section 3, we are left with some technical propositions that have to be proven in order to complete the proofs of the main results in this section. In Section 4, we study traces on $\mathfrak{A}_{M/\Z_2}((\hbar_1))((\hbar_2))$ and prove a local algebraic
index formula. We use our knowledge of trace densities to prove the technical propositions required to complete the main results of Section 3.\\

\noindent{\bf Acknowledgments:} We are especially grateful to Yuri Berest and Giovanni Felder for their encouragement, the very useful discussions we had with them as well as their important suggestions. We would also like to thank Damien
Calaque, Vasiliy Dolgushev and Gilles Halbout for
interesting discussions and suggestions. A.R. is supported by the Swiss National Science Foundation for the project ``Topological quantum mechanics and index theorems" (Ambizione Beitrag Nr.$\text{ PZ}00\text{P}2\_127427/1$). The contribution of A.R. towards this paper is part of work done towards this project. A.R. would also like to thank ETH Z\"{u}rich as well as Cornell University for providing excellent working conditions at various stages in this work. X.T. is partially supported by NSF grant
0900985.

\section{Formal computations.}

In this section, we compute the Hochschild homology of formal
analogs of the symplectic reflection algebras defined by ~\cite{EG}.
As a special case, we obtain the Hochschild homology of the
Dunkl-Weyl algebra $\dk((\hb_1))((\hb_2))$ constructed in
~\cite{HT}. We then provide an explicit formula for the unique
normalized $\c((\hb_1))((\hb_2))$-linear trace $\phi$ on
$\dk((\hb_1))((\hb_2))$. Using $\phi$ and the Hochschild
$2n-2$-cocycle $\tau_{2n-2}$ from ~\cite{FFS}, we construct a
Hochschild $2n-2$-cocycle $\psi_{2n-2}$ of the formal Dunkl-Weyl
algebra $\W_{n-1}((\hb_1)) \otimes_{\c((\hb_1))}
\dk((\hb_1))((\hb_2))$. We then prove a local Riemann-Roch theorem
for a version of this cocycle.

\subsection{Hochschild homology.}

Let $G$ be a finite subgroup of $\text{Sp}(2n,\c)$. Then, $G$ acts
by automorphisms on the Weyl algebra
$$\A_n((\hb_1)):= \frac{\c\langle x_1,...,x_n,y_1,...,y_n\rangle((\hb_1))}{\langle [x_i,y_i]=\hb_1, \text{ }1 \leq i \leq n;\,\,\,[x_i,y_j]=0,\,\,i \neq j \rangle  } \text{.}$$
Let $a_j$ denote the number of conjugacy classes of elements of $G$
having eigenvalue $1$ with multiplicity $j$. Let $\A_n^G((\hbar_1))$
denote the subalgebra of elements in $\A_n((\hb_1))$ fixed by $G$.
One has the following result.

\begin{theorem} \la{one} (~\cite{AFLS})
$$ \hh_j(\A_n^G((\hb_1))) \cong \hh^{2n-j}(\A_n^G((\hb_1)),\A_n^G((\hb_1))) \cong \c((\hb_1))^{a_j} \text{. }$$
\end{theorem}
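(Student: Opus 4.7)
The plan is to follow the classical Alev--Farinati--Lambre--Solotar strategy. First I would replace the invariant subalgebra by the smash product: since $G$ is finite and $\c((\hb_1))$ is a field of characteristic zero, the idempotent $e := \frac{1}{|G|}\sum_{g \in G} g \in \A_n((\hb_1)) \# G$ is full, so $\A_n((\hb_1))^G \cong e(\A_n((\hb_1)) \# G)e \sim_M \A_n((\hb_1)) \# G$, and Morita invariance transports the Hochschild (co)homology. Next, I would invoke the standard decomposition for a finite group smash product,
$$\hh_m(\A_n((\hb_1)) \# G) \;\cong\; \Bigl(\bigoplus_{g \in G} \hh_m(\A_n((\hb_1)),\, {}_1\A_n((\hb_1))_g)\Bigr)^G,$$
where $G$ acts by simultaneous conjugation so the invariant part is a sum over conjugacy classes, with the centralizer $Z_G(g)$ acting on the $g$-th summand.

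The core of the proof is the local computation
$$\hh_m(\A_n((\hb_1)),\, {}_1\A_n((\hb_1))_g) \;\cong\; \begin{cases} \c((\hb_1)) & m = \dim_\c V^g, \\ 0 & \text{otherwise}, \end{cases}$$
for each $g \in G$, where $V = \c^{2n}$ and $V^g$ is its $1$-eigenspace. I would carry this out via the Koszul bimodule resolution $\A_n((\hb_1)) \otimes \Lambda^\bullet V^* \otimes \A_n((\hb_1)) \to \A_n((\hb_1))$. Since $g$ is a finite-order symplectic automorphism, the $g$-stable decomposition $V = V^g \oplus V'$ (with $V'$ the sum of the non-$1$ eigenspaces) has each summand symplectic and $1 - g$ invertible on $V'$. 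This yields a tensor splitting $\A_n((\hb_1)) \cong \A_{V^g}((\hb_1)) \otimes \A_{V'}((\hb_1))$ equivariant for $g$, and a Künneth splitting of the twisted Hochschild complex. On the $V^g$ factor, $g$ acts trivially and the Koszul complex computes the classical $\hh_*$ of the Weyl algebra, namely $\c((\hb_1))$ concentrated in the top degree $\dim V^g$. On the $V'$ factor, $(1-g)^{-1}$ furnishes a contracting homotopy of the twisted Koszul complex away from degree $0$, where it collapses to $\c((\hb_1))$.

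For Hochschild cohomology I would invoke Van den Bergh duality. The Weyl algebra $\A_n((\hb_1))$ is Calabi--Yau of dimension $2n$, and since $G \subset \mathrm{Sp}(2n,\c)$ acts by symplectic (hence volume-preserving) automorphisms, the modular character of the smash product is trivial, so $\A_n((\hb_1)) \# G$ is again Calabi--Yau of dimension $2n$, and so is its Morita equivalent $\A_n^G((\hb_1))$. This yields the duality isomorphism $\hh^{2n-j}(\A_n^G((\hb_1)),\A_n^G((\hb_1))) \cong \hh_j(\A_n^G((\hb_1)))$. Passing to $G$-invariants in the decomposition picks out one canonical one-dimensional summand per conjugacy class $[g]$ with $\dim V^g = j$ (the centralizer acts trivially on the canonical line), producing $\c((\hb_1))^{a_j}$.

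I expect the main obstacle to be the local Koszul computation: one must verify that $(1-g)^{-1}$ on $V'$ genuinely produces a contracting homotopy at the chain level of the twisted Koszul complex, not only on homology, and that the tensor splitting of $\A_n((\hb_1))$ is strictly compatible with the $g$-twist. A secondary delicate point is the Calabi--Yau transfer to the smash product, which again uses symplecticity of $G \subset \mathrm{Sp}(2n,\c)$ in an essential way.
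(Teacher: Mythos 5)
The paper does not prove this theorem; it is quoted verbatim from [AFLS] as known input, so there is no internal proof to compare against. Your reconstruction is essentially the argument in that reference: Morita passage to $\A_n((\hb_1))\#G$ via the averaging idempotent, the conjugation-twisted decomposition of Hochschild homology of a smash product, the $g$-stable symplectic splitting $V=V^g\oplus V'$ (which you correctly observe is an orthogonal symplectic direct sum) feeding a K\"unneth reduction of the twisted Koszul complex with $(1-g)^{-1}$ supplying the contracting homotopy on the $V'$ factor, and the observation that $Z_G(g)$ acts on the one-dimensional top twisted homology through $\det\bigl(\cdot|_{V^g}\bigr)=1$. The only place you depart slightly is the cohomology statement: you get it from Van den Bergh duality for the Calabi--Yau algebra $\A_n((\hb_1))\#G$, invoking $\det g=1$ for $g\in\mathrm{Sp}(2n,\c)$ to trivialize the Nakayama/modular twist on the smash product, whereas [AFLS] run the dual twisted Koszul complex directly in each conjugacy-class block. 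Both routes are correct and both hinge on symplecticity in the same essential way; the duality route is cleaner but imports the (post-AFLS) fact that unimodular smash products of Calabi--Yau algebras remain Calabi--Yau, a point worth citing explicitly if you present the argument this way.
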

\noindent{\textbf{Standing assumption and notations.}} We recall that when the triple $(\c^{2n},\sum_{i=1}^n dx_i \wedge dy_i,G)$ is indecomposable in the sense of ~\cite{EG}, $a_j=0$ for odd $j$ (see equation 2.12 in ~\cite{EG}). We shall henceforth assume that this is the case. For notational brevity, we shall henceforth denote $\A_n^G((\hb_1))$ by $\A_n^G$, keeping in mind that $\A_n^G$ is an algebra over $\c((\hb_1))$.\\

 In particular, $\hh^2(\A_n^G,\A_n^G)$ is a vector space of dimension $a_{2n-2}$ over $\c((\hb_1))$. The number $a_{2n-2}$ is the number of conjugacy classes of {\it symplectic reflections} in $G$. This is nonzero in general. In other words, $\A_n^G$ has nontrivial formal deformations in general.  Let $\theta \in \hh^2(\A_n^G,\A_n^G)$. Then, $\theta$ defines a $\c((\hbar_1))((\hbar_2))$-linear star product $\star$ on $\A_n^G((\hb_2))$ such that
 $$a \star b = ab+\hb_2(....) \text{ for } a,b \in \A_n^G $$ by Theorem 1.3 of ~\cite{EG}.
 Denote the algebra $(\A_n^G((\hb_2)),\star)$ by $B^{\theta}((\hb_2))$. One has the following generalization of a part of Theorem ~\ref{one}.
 \begin{prop} \la{p1}
 $$ \hh_j(B^{\theta}((\hb_2))) \cong \c((\hb_1))((\hb_2))^{a_j} \cong \hh^{2n-j}(B^{\theta}((\hb_2)),B^{\theta}((\hb_2)))\text{.}$$
 \end{prop}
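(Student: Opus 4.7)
\noindent\textbf{Proof proposal for Proposition \ref{p1}.}

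The plan is to transfer the AFLS computation (Theorem~\ref{one}) to the deformed algebra $B^\theta((\hb_2))$ by running the $\hb_2$-adic filtration spectral sequence, and then to exploit the standing assumption $a_j = 0$ for odd $j$ to force degeneration at the $E_1$-page on purely parity grounds.

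First I would reduce to the formal case. Let $B^\theta[[\hb_2]]$ denote the flat $\c((\hb_1))[[\hb_2]]$-algebra $(\A_n^G[[\hb_2]], \star)$; then $B^\theta((\hb_2))$ is its flat localization in $\hb_2$, so because Hochschild (co)homology commutes with flat base change in the regime where everything in sight is degreewise finitely generated, it suffices to prove
$$\hh_j(B^\theta[[\hb_2]]) \;\cong\; \c((\hb_1))[[\hb_2]]^{a_j} \;\cong\; \hh^{2n-j}(B^\theta[[\hb_2]], B^\theta[[\hb_2]]),$$
after which tensoring with $\c((\hb_1))((\hb_2))$ over $\c((\hb_1))[[\hb_2]]$ yields the statement.

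Next I would filter the Hochschild chain complex $C_\bullet := C_\bullet(B^\theta[[\hb_2]])$ by the subcomplexes $\hb_2^p\,C_\bullet$. The associated graded complex is the classical Hochschild chain complex of $\A_n^G$ extended $\c((\hb_1))$-linearly to $\c((\hb_1))[[\hb_2]]$, and Theorem~\ref{one} identifies its homology as $\c((\hb_1))[[\hb_2]]^{a_j}$ in degree $j$, concentrated in even $j$ by the standing hypothesis. Every differential $d_r$ on the resulting spectral sequence changes total homological degree by $1$, so it carries a class in even degree to a zero group in odd degree; the sequence degenerates at $E_1$. Since the $\hb_2$-adic filtration is Hausdorff and complete, the spectral sequence converges strongly and gives the desired identification for homology.

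For cohomology I would run the analogous argument on the cochain complex $C^\bullet(B^\theta[[\hb_2]], B^\theta[[\hb_2]])$ of $\c((\hb_1))[[\hb_2]]$-multilinear, $\hb_2$-adically continuous cochains, whose classical limit computes $\hh^\bullet(\A_n^G, \A_n^G) \cong \c((\hb_1))^{a_{2n-\bullet}}$, again concentrated in even cohomological degree by Theorem~\ref{one}; the same parity argument kills every $d_r$. The main technical obstacle I anticipate is in this cohomological step: the naive Hochschild cochain complex is an infinite product and the $\hb_2$-adic filtration need not be exhaustive on it, so I would need to work throughout in the $\hb_2$-topologized (equivalently, $\hb_2$-adically continuous) setting to ensure exhaustiveness and completeness of the filtration, at which point parity-degeneration and strong convergence yield the second half of the proposition.
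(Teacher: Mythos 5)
Your homology argument coincides with the paper's: both filter $\text{C}_{\bullet}(B^{\theta}[[\hb_2]])$ by powers of $\hb_2$, identify the $E^1$-page with AFLS groups via Theorem~\ref{one}, and let the parity constraint $a_{\text{odd}}=0$ force degeneration; then one exhausts $\text{C}_{\bullet}(B^{\theta}((\hb_2)))$ as $\varinjlim_p \hb_2^{-p}\text{C}_{\bullet}(B^{\theta}[[\hb_2]])$ and passes to the colimit.

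Your cohomology argument, however, is a genuinely different route from the paper's. The paper does not run a dual spectral sequence on cochains at all: it invokes the proof of Theorem~\ref{four}, i.e.\ a Poincar\'e-duality type argument via cap product with a Hochschild $2n$-cycle $\theta$ whose associated graded cap product $\theta_0\cap-$ is already known to be an isomorphism for $\A_n^G$; the remark after the proposition also records the alternative of showing $B^{\theta}((\hb_2))\in VB(2n)$ and applying Van Den Bergh duality. Your proposal of filtering a continuous cochain complex and degenerating on parity grounds would, in principle, give more direct information, but it has a gap precisely at the point you try to wave through at the start: the passage from $\text{HH}^{\bullet}$ over $\c((\hb_1))[[\hb_2]]$ to $\text{HH}^{\bullet}$ over $\c((\hb_1))((\hb_2))$. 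For $\text{Tor}$ (homology) this is automatic since tensor product commutes with localization; for $\text{Ext}$ (cohomology) one needs $\text{Hom}$ to commute with the flat base change, and this fails in general for infinitely generated modules. Your phrase ``in the regime where everything in sight is degreewise finitely generated'' does not apply: the bar resolution and the Hochschild cochain complex are not degreewise finitely generated over the enveloping algebra. The subtlety you do flag at the end (exhaustiveness/completeness of the $\hb_2$-adic filtration on cochains) is a different and secondary issue.

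The gap is fillable, but it requires an input you do not mention: $\A_n^G$ is Morita equivalent to $\A_n\rtimes G$, which admits a finite Koszul resolution by finitely generated free bimodules; hence $\A_n^G$ has a bounded resolution by finitely generated projective bimodules, and this lifts $\hb_2$-adically to $B^{\theta}[[\hb_2]]$. Only after replacing the bar complex by this finitely generated resolution does $\text{Ext}$ commute with the flat extension $\c((\hb_1))[[\hb_2]]\to\c((\hb_1))((\hb_2))$, and only then is the filtered cochain complex degreewise a finite sum of copies of the algebra, so that your $\hb_2$-adic filtration is well behaved and your parity-degeneration argument goes through. The paper's duality-based route sidesteps this resolution bookkeeping by transferring the problem to homology, where the base change is free.
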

\begin{proof}
We note that $B^{\theta}[[\hb_2]]:=(\A^G_n[[\hb_2]],\star)$ is a subalgebra of $B^{\theta}((\hb_2))$. Further, as modules over $\c((\hb_1))[[\hb_2]]$, $$\text{C}_{\bullet}(B^{\theta}((\hb_2)))=\varinjlim_{p \geq 0} \hb_2^{-p} \text{C}_{\bullet}(B^{\theta}[[\hb_2]]) \text{. }$$
Therefore, to show that $\hh_j(B^{\theta}((\hb_2))) \cong \c((\hb_1))((\hb_2))^{a_j}$, it suffices to show that
\begin{equation} \la{a1} \hh_j(B^{\theta}[[\hb_2]]) \cong \c((\hb_1))[[\hb_2]]^{a_j} \text{.}\end{equation}
For this, put
$F_p\text{C}_{\bullet}(B^{\theta}[[\hb_2]]):=\hb_2^{-p}\text{C}_{\bullet}(B^{\theta}[[\hb_2]])$
for $p \leq 0$. This is a complete, increasing exhaustive filtration
on $\text{C}_{\bullet}(B^{\theta}[[\hb_2]])$. The completeness of
this filtration follows from the facts that
$\c[[\hb_2]]=\varprojlim_{k \geq 0}
\frac{\c[[\hb_2]]}{\hb_2^k\c[[\hb_2]]}$ and that
$\text{C}_{\bullet}(B^{\theta}[[\hb_2]])$ is a complex of flat
$\c[[\hb_2]]$-modules. This filtration yields a spectral sequence
such that
$$E^{1}_{pq}=\text{H}_{p+q}(\frac{F_p\text{C}_{\bullet}}{F_{p-1}\text{C}_{\bullet}})
\cong \hb_2^{-p}\c((\hb_1))^{a_{p+q}} \text{ for } p \leq 0 $$
with $E^{1}_{pq}=0$ otherwise. The isomorphism above is a consequence of Theorem ~\ref{one}. Clearly, this spectral sequence is bounded above. Since $a_{p+q}=0$ for $p+q$-odd, $E^{1}_{pq}=E^{\infty}_{pq}$ in this case. By the complete convergence theorem, this spectral sequence converges to $\hh_{p+q}(B^{\theta}[[\hb_2]])$. From this, equation ~\eqref{a1} is immediate, proving that $\hh_j(B^{\theta}((\hb_2))) \cong \c((\hb_1))((\hb_2))^{a_j}$.\\

Since the same proof of Theorem \ref{four} applies to this special
case to compute the Hochschild cohomology of $B^\theta((\hbar_2))$
from its Hochschild homology, we leave the details of this proof to
our readers.
\end{proof}

\noindent{\bf Remark.} Alternatively, by suitably modifying the proof of
Proposition 6 of \cite{DE}, we can prove that $B^\theta((\hbar_2))$
is in the class of $VB(2n)$ (See Proposition 2 of \cite{DE}), then
we can obtain the Hochschild cohomology of $B^\theta((\hbar_2))$
from its Hochschild homology using the Van den Bergh duality
\cite{Vdb1}-\cite{Vdb2}.\\

Let $n=1$, $G=\Z_2$ with the generator $\gamma$ of $G$ acting on
$\c\langle x,y\rangle$ by multiplication by $-1$. Consider the
formal symplectic reflection algebra $$\mathcal R:= \frac{\c\langle
x,y \rangle ((\hb_1))((\hb_2)) \rtimes \Z_2}{\langle
[y,x]=\frac{\hb_1(1+2\hb_2\gamma)}{2} \rangle} \text{.}$$ Recall
that there is a vector space isomorphism between $\c[x,y] \rtimes
{\Z_2}((\hb_1))((\hb_2))$ and $\mathcal R$ identifying the
commutative monomial $x^ay^b$ with the image in $\mathcal R$ of the
(noncommutative) monomial $x^ay^b$. Let
$\textbf{e}:=\frac{1+\gamma}{2}$ and let
$$ \mathcal D_2((\hb_1))((\hb_2)):= \c[x,y]^{\Z_2}((\hb_1))((\hb_2))\text{. }$$
Note that $\mathcal D_2((\hb_1))((\hb_2))$ is isomorphic (as
$\c((\hb_1))((\hb_2))$-modules) to the spherical subalgebra
$\textbf{e}\mathcal R\textbf{e}$ of $\mathcal R$. Explicitly, this
isomorphism identifies $f \in \mathcal D_2((\hb_1))((\hb_2))$ with
$f.\textbf{e} \in \textbf{e}\mathcal R\textbf{e}$. As a result, the
product on $\mathcal R$ induces a star product on $\mathcal
D_2((\hb_1))((\hb_2))$. It was shown in ~\cite{HT} that the star
product on $\mathcal D_2((\hb_1))((\hb_2))$ induces a star product
on $\text{C}^{\infty}(D_{\epsilon})^{\Z_2}((\hb_1))((\hb_2))$ for
any $0<\epsilon \leq\infty$ (where $D_{\epsilon}$ denotes the open
disc of radius $\epsilon$ centered at the origin in $\R^2$). This
was done by exhibiting an explicit Moyal-type formula for the
product on $\mathcal D_2((\hb_1))((\hb_2))$ (Theorem 3.10 of
~\cite{HT}). Let $\dk((\hb_1))((\hb_2))$ denote
$\text{C}^{\infty}(\R^2)^{\Z_2}((\hb_1))((\hb_2))$ equipped with the
star product induced by that on $\mathcal D_2((\hb_1))((\hb_2))$.
The proof of the following proposition is completely analogous to
that of a part of Proposition ~\ref{p1}. It is therefore omitted. More
generally, Proposition ~\ref{p2} holds with $\R^2$ replaced by
$D_{\epsilon}$ in the definition of $\dk((\hb_1))((\hb_2))$. We
remark that $\dk((\hb_1))((\hb_2))$ is a bornological algebra with a
canonical bornology coinciding with both the precompact bornology as well as the von Neumann bornology (see Appendix A of ~\cite{PPTT} for definitions), and that the tensor powers used to define
Hochschild chains of $\dk((\hb_1))((\hb_2))$ are bornological.
\begin{prop} \la{p2}
\begin{eqnarray*}
 \hh_0(\dk((\hb_1))((\hb_2))) \cong \hh_2(\dk(\hb_1))((\hb_2))) \cong \c((\hb_1))((\hb_2))\\
 \hh_i(\dk((\hb_1))((\hb_2))) =0 \text{ for } i \neq 0,2 \\
 \end{eqnarray*}
\end{prop}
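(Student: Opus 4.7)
The plan is to imitate the spectral sequence argument used in the proof of Proposition \ref{p1}, replacing the polynomial algebra $B^{\theta}$ by its smooth analogue. Let $\dk((\hb_1))[[\hb_2]]$ denote the bornological subalgebra $C^{\infty}(\R^2)^{\Z_2}((\hb_1))[[\hb_2]]$ of $\dk((\hb_1))((\hb_2))$ equipped with the restricted star product. Exactly as in Proposition \ref{p1}, one has
$$C_{\bullet}(\dk((\hb_1))((\hb_2))) \cong \varinjlim_{p \geq 0} \hb_2^{-p}\, C_{\bullet}(\dk((\hb_1))[[\hb_2]]),$$
so it is enough to show that $\hh_j(\dk((\hb_1))[[\hb_2]]) \cong \c((\hb_1))[[\hb_2]]$ for $j \in \{0,2\}$ and vanishes otherwise, and then invert $\hb_2$.

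I would then filter by $F_p C_{\bullet} := \hb_2^{-p}\, C_{\bullet}(\dk((\hb_1))[[\hb_2]])$ for $p \leq 0$. This is a complete, exhaustive, increasing filtration, with completeness following (as in Proposition \ref{p1}) from $\hb_2$-adic completeness of $\c((\hb_1))[[\hb_2]]$ together with flatness of the bornological chain modules over it. The reduction $\dk((\hb_1))[[\hb_2]]/\hb_2$ is the Moyal-type algebra $\dk((\hb_1)) = C^{\infty}(\R^2)^{\Z_2}((\hb_1))$, so the associated spectral sequence satisfies
$$E^1_{pq} \cong \hb_2^{-p}\, \hh_{p+q}(\dk((\hb_1))) \quad \text{for } p \leq 0,$$
and vanishes otherwise. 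The numerical input needed here is the smooth/bornological analogue of Theorem \ref{one}: for $n = 1$ and $G = \Z_2$ one has $a_0 = a_2 = 1$ and all other $a_j = 0$, and $\hh_j(\dk((\hb_1))) \cong \c((\hb_1))^{a_j}$. This is extracted from the Nest-Tsygan / Pflaum-Posthuma-Tang-Tseng computation of the Hochschild homology of a Moyal-type deformation of a symplectic orbifold, specialized to $\R^2/\Z_2$.

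Since $a_{p+q} = 0$ whenever $p+q$ is odd, the $E^1$ page is supported on lattice points with $p+q$ even, so every differential $d^r$ for $r \geq 1$ is forced to vanish and $E^{\infty} = E^1$. The spectral sequence is bounded above, and by the complete convergence theorem it converges to $\hh_{p+q}(\dk((\hb_1))[[\hb_2]])$, giving the claim after passing to the colimit in $p$. The main obstacle, and the reason the argument of Proposition \ref{p1} does not transcribe verbatim, is the bornological bookkeeping: one must confirm that completeness of the filtration and flatness of the chains persist when tensor products are interpreted in the bornological sense, and that the $E^1$ identification really reduces to the smooth AFLS-type computation. Granting those points the argument is formal, and no cohomological half is required since the proposition is only about Hochschild homology.
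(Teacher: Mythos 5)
Your proposal is correct and matches the approach the paper intends: the authors explicitly omit the proof, stating it is ``completely analogous'' to Proposition~\ref{p1}, and your write-up is exactly that transcription (filter $C_{\bullet}(\dk((\hb_1))[[\hb_2]])$ by powers of $\hb_2$, identify the $E^1$ page with $\hh_{\bullet}$ of the Moyal-type smooth quantization of $\R^2/\Z_2$ from the NPPT/PPTT orbifold computation, observe degeneration from parity of the $a_j$, apply complete convergence, then invert $\hb_2$). You also correctly flag the only genuinely new point relative to Proposition~\ref{p1}, namely that the $E^1$ identification and the flatness/completeness statements must be read in the bornological category, which is precisely what the paper's accompanying remark on canonical bornologies and bornological tensor powers is addressing.
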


\subsection{The trace formula.}

Proposition ~\ref{p2} shows that $\dk((\hb_1))((\hb_2))$ has, upto scalar, a unique trace. For compatibility of our notation with that of ~\cite{HT}, we put $z:=x+iy$, $\bar{z}:=x-iy$ where $i:=\sqrt{-1}$. In this subsection, we prove the following explicit formula for a trace on $\dk((\hb_1))((\hb_2))$. For a real number $r$, let $\lfloor r \rfloor$ denote the greatest integer $\leq r$.

\begin{theorem} \la{two}
$[1] \neq 0$ in $\hh_0(\dk((\hb_1))((\hb_2)))$. Let $\phi$ denote the unique $\c((\hb_1))((\hb_2))$-linear trace on $\dk((\hb_1))((\hb_2))$ such that $\phi(1)=1$. Then,
\begin{equation} \la{a2} \phi(f)= \sum_{k=0}^{\infty} \frac{(i\hb_1)^k}{(k!)^2}.\large({\prod}_{l=1}^k{ \large\{\frac{l}{2}+{(-1)}^{l+1}\frac{2\lfloor\frac{l+1}{2}\rfloor\hb_2}{l+1}\large\}}\large).\frac{\partial^{2k}f}{\partial z^k \partial \bar{z}^k}(0,0)\end{equation} for any $f(z,\bar{z}) \in \text{C}^{\infty}(\R^2)^{\Z_2} \subset \dk((\hb_1))((\hb_2))$.
\end{theorem}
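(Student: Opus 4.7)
The plan is to take the right-hand side of \eqref{a2} as the definition of $\phi$, verify it is a $\c((\hb_1))((\hb_2))$-linear trace with $\phi(1) = 1$, and then invoke Proposition \ref{p2} for uniqueness. The formula is well-defined on $\dk((\hb_1))((\hb_2))$: each summand has $\hb_1$-order at least $k$ shifted by the $\hb_1$-order of $f$, so the series converges in $\c((\hb_1))((\hb_2))$. Substituting $f = 1$ kills every term except $k = 0$, giving $\phi(1) = 1$; once cyclicity is established, this in particular shows $[1] \neq 0$ in $\hh_0(\dk((\hb_1))((\hb_2)))$.

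The main task is the cyclicity $\phi(f \star g) = \phi(g \star f)$. By the explicit Moyal-type star product formula of Theorem 3.10 of \cite{HT}, $(f, g) \mapsto f \star g$ is a $\c((\hb_1))((\hb_2))$-linear bidifferential operator, so every derivative $\partial_z^k \partial_{\bar z}^k(f \star g - g \star f)(0,0)$ depends only on the jets of $f$ and $g$ at $0$. Since \eqref{a2} likewise factors through the $\infty$-jet of its argument at $0$, cyclicity on all of $\dk((\hb_1))((\hb_2))$ reduces, by $(\hb_1, \hb_2)$-adic continuity, to cyclicity on the polynomial subalgebra $\mathcal D_2((\hb_1))((\hb_2)) \cong \textbf{e} \mathcal R \textbf{e}$, i.e., to an identity in the spherical subalgebra of $\mathcal R$.

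To verify cyclicity on $\textbf{e} \mathcal R \textbf{e}$, I would work in the PBW basis $\{z^a \bar z^b \textbf{e}\}$, where $z = x+iy$ and $\bar z = x-iy$, and use the defining relation of $\mathcal R$ rewritten as $[\bar z, z] = -i\hb_1(1 + 2\hb_2 \gamma)$. Combined with $\gamma z = -z\gamma$, a short induction yields the normal-ordering identity $\bar z\,z^k = z^k \bar z - k i\hb_1 z^{k-1} + \beta_k z^{k-1}\gamma$ with $\beta_k = -2i\hb_1\hb_2$ for odd $k$ and $\beta_k = 0$ for even $k$. This parity alternation is precisely what produces the sign $(-1)^{l+1}$ and the weight $\lfloor(l+1)/2\rfloor/(l+1)$ in \eqref{a2}. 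Iterating the identity and projecting via $\gamma\textbf{e} = \textbf{e}$ gives a direct computation of $\phi(z^k \bar z^k) = (i\hb_1)^k \prod_{l=1}^k\{\frac{l}{2}+(-1)^{l+1}\frac{2\lfloor(l+1)/2\rfloor\hb_2}{l+1}\}$, matching \eqref{a2}, while $\phi(z^a\bar z^b) = 0$ for $a \neq b$ because such monomials lie in $[\textbf{e} \mathcal R \textbf{e}, \textbf{e} \mathcal R \textbf{e}]$.

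The main obstacle is the combinatorial verification that the $\phi$ so described is in fact cyclic on generators of $\mathcal D_2$, for instance on $z^2, \bar z^2$, and $z\bar z$. Each identity $\phi(f \star g - g \star f) = 0$ unpacks, via the normal-ordering identity and its iterates, into a recursion on the weights $\frac{l}{2} + (-1)^{l+1}\frac{2\lfloor(l+1)/2\rfloor \hb_2}{l+1}$, and the precise form of these weights is exactly what makes the recursion close. Once this verification is complete, Proposition \ref{p2} together with $\phi(1) = 1$ identifies $\phi$ as the unique normalized trace, completing the proof.
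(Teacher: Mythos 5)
Your proposal takes a genuinely different logical route from the paper's, and as written it has a real gap precisely at the step you flag as ``the main obstacle.'' The paper never verifies cyclicity of an a priori defined functional. Instead it works directly in $\hh_0(\mathcal D_2((\hb_1))((\hb_2)))=\mathcal D_2((\hb_1))((\hb_2))/[\mathcal D_2((\hb_1))((\hb_2)),\mathcal D_2((\hb_1))((\hb_2))]$. Using the recursion obtained from $[z^2,\bar z^p]$ (equations~\eqref{a6}--\eqref{a9}) together with the derivation identity $[z\bar z,g]=i\hb_1(z\partial_z g-\bar z\partial_{\bar z}g)$ (equation~\eqref{aht}, which kills off-diagonal monomials $z^p\bar z^q$, $p\neq q$), the paper shows that $[f]=\phi(f)\cdot[1]$ in $\hh_0$ for every polynomial $f$, with $\phi(f)$ the closed formula~\eqref{a2}. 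At that point cyclicity of $\phi$ on $\mathcal D_2$ is automatic --- any linear functional that factors through $\hh_0$ is a trace --- and the only remaining work is the jet/density argument to pass from polynomial $f$ to $f\in\text{C}^{\infty}(\R^2)^{\Z_2}$. You instead declare~\eqref{a2} as a definition and propose to check cyclicity directly, which you do not carry out.

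Beyond being unexecuted, the plan for that step is conceptually incomplete. You propose checking cyclicity ``on generators'' $z^2,\bar z^2,z\bar z$, i.e., $\phi([g,f])=0$ for $g$ a generator and $f$ arbitrary. But $[\mathcal D_2,\mathcal D_2]$ is not the sum of the images of $\text{ad}(g)$ over a generating set: for $a=g_1g_2$ one has $[a,b]=g_1[b,g_2]^{\text{op}}$ terms via $[g_1g_2,b]=g_1[g_2,b]+[g_1,b]g_2$, and to move the spectator factors $g_1$, $g_2$ around inside $\phi$ one would already need the trace property one is trying to establish --- a circularity. The $\hh_0$ argument in the paper sidesteps this entirely. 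Your normal-ordering identity $\bar z z^k=z^k\bar z-ki\hb_1 z^{k-1}+\beta_k z^{k-1}\gamma$ and the parity alternation of $\beta_k$ are essentially the paper's equation~\eqref{a5}, and the weights you extract are the correct ones, so the raw computation is on track; what is missing is the reinterpretation of that computation as an identity among classes in $\hh_0(\mathcal D_2)$ rather than as a to-be-verified cyclicity property of a defined functional. Once you make that shift, cyclicity on polynomials is free, $[1]\neq 0$ follows from $\hh_0\neq 0$, and the remaining density argument for smooth $f$ proceeds as you sketch.
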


Before proving Theorem~\ref{two}, we remark that equation ~\eqref{a2} can be interpreted as a character formula for the spherical subalgebra $\mathcal D_2$ of the formal symplectic reflection algebra $\mathcal R$. It would, for instance, be interesting to have analogous formulas for spherical subalgebras of arbitrary formal symplectic reflection algebras. It would then be interesting to compare generalizations of ~\eqref{a2} for formal Cherednik algebras with character formulas in ~\cite{BEG} and ~\cite{BEG1}.

\begin{proof}
We first show ~\eqref{a2} for any polynomial $f(z,\bar{z})$ in $\mathcal D_2((\hb_1))((\hb_2))$.  We shall perform our computations in $\mathcal R$, keeping in mind that $\mathcal D_2((\hb_1))((\hb_2))$ is the spherical subalgebra $\textbf{e}.\mathcal R.\textbf{e}$ of $\mathcal R$ ($\textbf{e}:=\frac{1+\gamma}{2}$). To begin with,
\begin{equation} \la{a3} [z,\bar{z}]=i\hb_1(1+2\hb_2\gamma) \text{ in } \mathcal R \text{.} \end{equation}
It follows that
\begin{eqnarray}
  \nonumber [\bar{z},z^2]=-i\hb_1(1+2\hb_2\gamma)z-i\hb_1.z(1+2\hb_2\gamma)= -2i\hb_1z-2i\hb_1\hb_2(\gamma.z+z.\gamma)=-2i\hb_1z \\
  \la{a4} \implies [z^2,\bar{z}]=2i\hb_1z \text{ in } \mathcal R \text{.}
\end{eqnarray}
It also follows that
\begin{eqnarray}
\nonumber [z,\bar{z}^q]=\sum_{k=1}^q \bar{z}^{k-1}i\hb_1(1+2\hb_2\gamma)\bar{z}^{q-k}\\
\nonumber = qi\hb_1\bar{z}^{q-1}+2i\hb_1\hb_2\bar{z}^{q-1} \sum_{k=1}^{q} {(-1)}^{q-k}.\gamma\\
\la{a5} = qi\hb_1\bar{z}^{q-1}+i\hb_1\hb_2\bar{z}^{q-1}(1-{(-1)}^q)\gamma \text{ in } \mathcal R \text{.}
\end{eqnarray}
Therefore,
\begin{eqnarray}
\nonumber [z^2,\bar{z}^p]=\sum_{k=1}^p \bar{z}^{k-1}.2i\hb_1z.\bar{z}^{p-k}\\
\nonumber \implies [\frac{z^2}{2i\hb_1},\bar{z}^p]= \sum_{k=1}^p \bar{z}^{k-1}.z.\bar{z}^{p-k} = pz\bar{z}^{p-1}-\sum_{k=1}^p [z,\bar{z}^{k-1}].\bar{z}^{p-k}\\
\nonumber = pz\bar{z}^{p-1} - \sum_{k=1}^p (k-1)i\hb_1\bar{z}^{p-2}-\sum_{k=1}^p i\hb_1\hb_2\bar{z}^{p-2}(1-{(-1)}^{k-1}).{(-1)}^{p-k}\gamma \\
\nonumber = pz\bar{z}^{p-1}-\frac{p(p-1)}{2}i\hb_1\bar{z}^{p-2}-i\hb_1\hb_2\bar{z}^{p-2}\gamma.\sum_{k=1}^p (1-{(-1)}^{k-1}).{(-1)}^{p-k}\\
\la{a6} =pz\bar{z}^{p-1}-\frac{p(p-1)}{2}i\hb_1\bar{z}^{p-2}-i\hb_1\hb_22{(-1)}^p\lfloor \frac{p}{2} \rfloor \bar{z}^{p-2}\gamma \text{ in } \mathcal R \text{.}
\end{eqnarray}

Hence,
\begin{eqnarray}
\nonumber [\frac{z^2}{2i\hb_1},z^{p-2}\bar{z}^p]=z^{p-2}[\frac{z^2}{2i\hb_1},\bar{z}^p]\\
\la{a7} = pz^{p-1}\bar{z}^{p-1}-\frac{p(p-1)}{2}i\hb_1z^{p-2}\bar{z}^{p-2}-i\hb_1\hb_22{(-1)}^p\lfloor \frac{p}{2} \rfloor z^{p-2}\bar{z}^{p-2}\gamma \text{ in } \mathcal R \text{.}
\end{eqnarray}
Note that $z^2$ and $z^{p-2}\bar{z}^{p}$ are elements of $\mathcal D_2((\hb_1))((\hb_2))$ for any $p \geq 2$. It follows from this and \eqref{a7} (with $p=k+1$) that
\begin{equation} \la{a8} [z^k\bar{z}^k]=[i\hb_1\frac{k}{2}z^{k-1}\bar{z}^{k-1} +\frac{2i\hb_1\hb_2{(-1)}^{k+1}\lfloor \frac{k+1}{2} \rfloor}{k+1}z^{k-1}\bar{z}^{k-1}\gamma] \text{ in } \hh_0(\mathcal D_2((\hb_1))((\hb_2))) \text{. }\end{equation}
Note that for $f,g \in \mathcal D_2((\hb_1))((\hb_2))$,
$$[f,g]_{\mathcal D_2((\hb_1))((\hb_2))}=[f.\textbf{e},g.\textbf{e}]_{\mathcal R}= [f,g]_{\mathcal R}.\textbf{e} \text{.} $$
Since $\gamma.\textbf{e}=\textbf{e}$, and each $f \in \mathcal D_2((\hb_1))((\hb_2))$ is identified with $f.\textbf{e} \in \mathcal R$,
\begin{equation} \la{a9} [z^k\bar{z}^k]=i\hb_1(\frac{k}{2}+{(-1)}^{k+1}\frac{2\hb_2\lfloor \frac{k+1}{2} \rfloor}{k+1})[z^{k-1}\bar{z}^{k-1}] \text{ in } \hh_0(\mathcal D_2((\hb_1))((\hb_2))) \text{ for any } k \geq 1 \text{. }\end{equation}
Now recall from ~\cite{HT} that \begin{equation} \la{aht} [z\bar{z},g]=i\hb_1(z\frac{\partial g}{\partial z} -\bar{z} \frac{\partial g}{\partial \bar{z}}) \text{.}\end{equation} It follows from this that $[z\bar{z},z^p\bar{z}^q]=i\hb_1(p-q)z^p\bar{z}^q$. Hence, for $p \neq q$, $[z^p\bar{z}^q]=0$ in $\hh_0(\mathcal D_2((\hb_1))((\hb_2)))$. Hence, for $f \in \c[z,\bar{z}]^{\Z_2} \subset \mathcal D_2((\hb_1))((\hb_2))$,
\begin{equation} \la{a10}
[f]=\sum_{k=0}^{\infty} \frac{1}{(k!)^2}[z^k\bar{z}^k]
\frac{\partial^{2k}f}{\partial z^k \partial \bar{z}^k}(0,0) \text{
in } \hh_0(\mathcal D_2((\hb_1))((\hb_2))) \text{.}
\end{equation}
Equation ~\eqref{a2} follows from ~\eqref{a10} and ~\eqref{a9} for
$f \in \c[z,\bar{z}]^{\Z_2}$. For arbitrary $f \in
\text{C}^{\infty}(\R^2)^{\Z_2} \subset \dk((\hb_1))((\hb_2))$,
~\eqref{a2} clearly makes sense. To complete the proof of the
desired theorem, we only need to verify that for arbitrary $f,g \in
\text{C}^{\infty}(\R^2)^{\Z_2} \subset \dk((\hb_1))((\hb_2))$,
$\phi([f,g])=0$ where $\phi$ is given by  ~\eqref{a2}. Let $k$ be an
arbitrary positive integer. By Theorem 3.10 of ~\cite{HT} and
~\eqref{a2}, the coefficients of $\hbar_1^l$ in $\phi([f,g])$ for $l
\leq k$ depend only on the jets of $f$ and $g$ at $(0,0)$ of order $
\leq p(k)$ for some function $p:\mathbb N \rar \mathbb N$. Since
$\phi$ is indeed a trace on  $\mathcal D_2((\hb_1))((\hb_2))$ and
since there exists a polynomial $\tilde{f} \in \c[z,\bar{z}]^{\Z_2}$
(resp. $\tilde{g} \in \c[z,\bar{z}]^{\Z_2}$) whose jets upto order
$\leq p(k)$ at $(0,0)$ coincide with those of $f$ (resp. $g$), the
coefficients of $\hbar_1^l$ in $\phi([f,g])$ for $l \leq k$ vanish.
This shows that $\phi$ extends to a trace on
$\dk((\hb_1))((\hb_2))$.
\end{proof}

\noindent{\textbf{Remark.}} Theorem ~\ref{two} holds with $\R^2$
replaced by $D_{\epsilon}$ in the definition of
$\dk((\hb_1))((\hb_2))$.

\subsection{A Hochschild $2n-2$-cocycle of $\W_{n-1}((\hbar_1)) \otimes_{\c((\hb_1))} \dk((\hb_1))((\hb_2))$.} Let $\W_{n-1}((\hb_1))$ denote the Weyl algebra
$$ \frac{\c\langle p_1,..,p_{n-1},q_1,....,q_{n-1}\rangle((\hb_1))}{\langle [p_i,q_j]=\hb_1\delta_{ij} \text{ for } 1 \leq i,j \leq n-1 \rangle}\text{.}$$
 Recall from ~\cite{FFS} that $\mathfrak{sp}_{2n-2}$ acts on $\W_{n-1}((\hb_1))$ by derivations. Introduce coordinates $w_1,..,w_{2n-2}$ with $w_{2i-1}=p_i$ and $w_{2i}=q_i$. Suppose that $\omega^0=\sum_i dq_i \wedge dp_i = \frac{1}{2} \sum_{i,j} \omega^0_{ij} dw_i \wedge dw_j$. Then $A=(a^i_j) \in \mathfrak{sp}_{2n-2}$ if and only if  the matrix $(a_{ij})$ is symmetric where $a_{ij}:=\sum_{k=1}^{2n-2} \omega^0_{ik}a^k_j$. The derivation of $\W_{n-1}((\hb_1))$ corresponding to a matrix $A$ is given by
 $$f \mapsto [{\tilde{A}},f]_{\hb_1} \text{ where } \tilde{A}=\frac{1}{2} \sum_{i,j} a_{ij}w_iw_j \text{.}$$
 Similarly, $U(1)$ acts on $\dk((\hb_1))((\hb_2))$ via automorphisms. Explicitly, $$\text{exp}(i\theta)(f)(z,\bar{z}) = f(z\text{exp}(i\theta),\bar{z}\text{exp}(-i\theta)) \text{.}$$
 \begin{prop} \la{p3}
 The derivative of the above $U(1)$-action maps $1 \in \mathfrak{u}_{1}$ to the derivation
 $[{z\bar{z}},-]_{\hb_1}$ of $\dk((\hb_1))((\hb_2))$.
 \end{prop}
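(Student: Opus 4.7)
The plan is to verify the claim by a direct computation based on equation (aht). First, I compute the derivative of the given $U(1)$-action. Identifying $1 \in \mathfrak{u}_{1}$ with the tangent vector $\tfrac{d}{d\theta}\big|_{\theta=0}$ to the parametrization $\theta \mapsto \exp(i\theta)$, one has
\begin{align*}
\frac{d}{d\theta}\bigg|_{\theta = 0} f(z e^{i\theta}, \bar z e^{-i\theta}) \;=\; i z \frac{\partial f}{\partial z} \;-\; i \bar z \frac{\partial f}{\partial \bar z}.
\end{align*}
This is the candidate derivation $\delta$ of $\dk((\hb_1))((\hb_2))$ corresponding to $1 \in \mathfrak{u}_1$. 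For $\delta$ to actually be a derivation with respect to the Dunkl-Weyl star product, the pullback formula must define an algebra automorphism, and this in turn reduces to the $U(1)$-equivariance of the Moyal-type formula of Theorem 3.10 of \cite{HT}. The equivariance is essentially built in, because the rescaling $(z,\bar z) \mapsto (e^{i\theta}z, e^{-i\theta}\bar z)$ preserves the defining relation $[z,\bar z] = i\hb_1(1 + 2\hb_2\gamma)$ of $\mathcal R$ (since $e^{i\theta} \cdot e^{-i\theta} = 1$) and commutes with the $\Z_2$-action by $\gamma$; the induced action on the spherical subalgebra $\mathcal D_2((\hb_1))((\hb_2)) \cong \textbf{e}\mathcal R \textbf{e}$ and its smooth completion $\dk((\hb_1))((\hb_2))$ is therefore by algebra automorphisms.

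On the other hand, equation (aht) computes the commutator $[z\bar z, f] = i\hb_1(z \partial_z f - \bar z \partial_{\bar z} f)$ in $\dk((\hb_1))((\hb_2))$. Following the convention introduced above for the Weyl algebra, where $[\tilde A, -]_{\hb_1}$ denotes the rescaled (``Hamiltonian'') bracket $\hb_1^{-1}[\tilde A, -]$ which survives the classical limit as the Poisson bracket, we obtain
$$[z\bar z, f]_{\hb_1} \;=\; \tfrac{1}{\hb_1}[z\bar z, f] \;=\; iz\,\partial_z f - i\bar z\,\partial_{\bar z} f \;=\; \delta(f),$$
which establishes the proposition. There is no real obstacle: the computation is a one-line verification once both sides are written down. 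The only subtleties worth flagging are (i) the normalization convention for the bracket $[-,-]_{\hb_1}$, without which the formal answer would be off by a factor of $\hb_1$, and (ii) the implicit invocation of $U(1)$-equivariance of the Halbout--Tang star product, which ensures that $\delta$ is a genuine derivation of the algebra structure on $\dk((\hb_1))((\hb_2))$.
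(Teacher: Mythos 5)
Your proof is correct and follows essentially the same route as the paper: the chain-rule computation of the derivative of the $U(1)$-action, followed by matching against equation~\eqref{aht} under the normalization $[a,-]_{\hb_1}=\hb_1^{-1}[a,-]$. The extra remarks on why the flow is by star-product automorphisms and on the bracket normalization are sensible clarifications but go beyond what the paper records, which simply invokes the chain rule and~\eqref{aht}.
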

 \begin{proof} One notes that for the curve $\theta(t)=t$, $$\frac{d}{dt}(\text{exp}(it)(f))|_{t=0}=iz\frac{\partial f}{\partial z}-i\bar{z}\frac{\partial f}{\partial \bar{z}}=[{z\bar{z}},f]_{\hb_1} \text{.}$$
 The first equality above is by the chain rule and the second equality above is by equation~\eqref{aht}.
 \end{proof}

 Equip $\W_{n-1}((\hb_1)) \otimes_{\c((\hb_1))} \dk((\hb_1))((\hb_2))$ with the Lie bracket $[\text{ },\text{ }]_{\hb_1}$. It follows that one has a map of Lie algebras
 \begin{eqnarray}
  \nonumber \mathfrak{sp}_{2n-2} \oplus \mathfrak{u}_{1} \hookrightarrow \W_{n-1}((\hb_1)) \otimes_{\c((\hb_1))} \dk((\hb_1))((\hb_2))\\
  \la{a11} A \oplus \alpha \mapsto \frac{1}{2} \sum_{i,j} a_{ij}w_i w_j \otimes 1 +1 \otimes {\alpha z \bar{z}} \text{.}
 \end{eqnarray}
Further recall that a $\mathfrak{sp}_{2n-2}$-basic reduced
Hochschild cocycle $\tau_{2n-2}$ of $\W_{n-1}((\hb_1))$ is
constructed in \cite{FFS}. Let $\psi_{2n-2}$ be the unique
Hochschild $2n-2$-cocycle of $\W_{n-1}((\hb_1))
\otimes_{\c((\hb_1))} \dk((\hb_1))((\hb_2))$ such that
  \begin{equation} \la{defpsi} \psi_{2n-2}(a_0 \otimes b_0,...,a_{2n-2} \otimes b_{2n-2})= \tau_{2n-2}(a_0,...,a_{2n-2}).\phi(b_0 \star ...\star b_{2n-2}) \end{equation}
 for $a_0,...,a_{2n-2} \in \W_{n-1}((\hb_1))$ and $b_0,..,b_{2n-2} \in \dk((\hb_1))((\hb_2))$.
 \begin{prop} \la{p4}
 $\psi_{2n-2}$ is $\mathfrak{sp}_{2n-2} \oplus \mathfrak{u}_{1}$-basic. In other words, for any $\gamma \in \mathfrak{sp}_{2n-2} \oplus \mathfrak{u}_{1} \oplus \c((\hb_1))((\hb_2))$,
 \begin{eqnarray}
 \la{a12} \sum_{j=0}^{2n-2} \psi_{2n-2}(a_0 \otimes b_0,...,[\gamma,a_j \otimes b_j]_{\hb_1},...,a_{2n-2}\otimes b_{2n-2})=0\\
 \la{a13} \sum_{j=1}^{2n-2} \psi_{2n-2}(a_0 \otimes b_0,....,\gamma,a_j \otimes b_j,...,a_{2n-3} \otimes b_{2n-3})=0
 \end{eqnarray}
 for $a_i \in \W_{n-1}((\hb_1))$ and $b_i \in \dk((\hb_1))((\hb_2))$.
 \end{prop}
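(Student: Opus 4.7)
The plan is to verify both identities by decomposing $\gamma$ along the embedding in \eqref{a11}. By $\c((\hb_1))((\hb_2))$-multilinearity in $\gamma$, I would write
\[
\gamma = \tilde{A} \otimes 1 + 1 \otimes \alpha z\bar{z} + c
\]
with $A \in \mathfrak{sp}_{2n-2}$, $\alpha \in \mathfrak{u}_1$, and $c \in \c((\hb_1))((\hb_2))$, and handle each summand separately. Since $\psi_{2n-2}$ is defined on elementary tensors as the product $\tau_{2n-2}(a_0,\ldots,a_{2n-2})\cdot \phi(b_0 \star \cdots \star b_{2n-2})$, I expect the Weyl-algebra factor and the Dunkl-Weyl factor to decouple for each piece of $\gamma$, reducing everything to properties already available for $\tau_{2n-2}$ and $\phi$.

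For \eqref{a12}, the scalar $c$ is central, so $[c,-]_{\hb_1}=0$ and its contribution vanishes trivially. For $\gamma = \tilde{A}\otimes 1$, the bracket gives $[\gamma, a_j \otimes b_j]_{\hb_1} = [\tilde{A}, a_j]_{\hb_1} \otimes b_j$, so the full sum factors as
\[
\phi(b_0 \star \cdots \star b_{2n-2}) \cdot \sum_{j=0}^{2n-2} \tau_{2n-2}(a_0, \ldots, [\tilde{A}, a_j]_{\hb_1}, \ldots, a_{2n-2}),
\]
which vanishes by the $\mathfrak{sp}_{2n-2}$-invariance of $\tau_{2n-2}$ from \cite{FFS}. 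For $\gamma = 1 \otimes \alpha z\bar{z}$, the bracket affects only the $b_j$-slot, and since $[\alpha z\bar{z},-]_{\hb_1}$ is a derivation of $\dk((\hb_1))((\hb_2))$, I would invoke the telescoping identity
\[
\sum_{j=0}^{2n-2} b_0 \star \cdots \star [\alpha z\bar{z}, b_j]_{\hb_1} \star \cdots \star b_{2n-2} = [\alpha z\bar{z},\, b_0 \star \cdots \star b_{2n-2}]_{\hb_1},
\]
apply $\phi$, and use the trace property from Theorem \ref{two} to conclude.

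For \eqref{a13}, both $\gamma = c$ and $\gamma = 1 \otimes \alpha z\bar{z}$ insert a $1$ into position $j \geq 1$ of $\tau_{2n-2}$, so each term vanishes because $\tau_{2n-2}$ is a reduced Hochschild cocycle (built into the \cite{FFS} construction). The remaining case $\gamma = \tilde{A} \otimes 1$ reduces to
\[
\phi(b_0 \star \cdots \star b_{2n-3}) \cdot \sum_{j=1}^{2n-2} \tau_{2n-2}(a_0, \ldots, a_{j-1}, \tilde{A}, a_j, \ldots, a_{2n-3}),
\]
and the inner sum vanishes by the horizontality of $\tau_{2n-2}$ as an $\mathfrak{sp}_{2n-2}$-basic cocycle. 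The main obstacle, such as it is, is conceptual rather than computational: one must pin down the precise formulation of ``$\mathfrak{sp}_{2n-2}$-basic'' used in \cite{FFS} and confirm it supplies both the invariance needed for \eqref{a12} and the horizontality plus reducedness needed for \eqref{a13}. Once this is secured the argument is pure bookkeeping built on the trace property of $\phi$ and the Leibniz rule for inner brackets.
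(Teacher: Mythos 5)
Your proposal is correct and follows essentially the same route as the paper: decompose $\gamma$ along the embedding \eqref{a11} into its $\mathfrak{sp}_{2n-2}$, $\mathfrak{u}_1$, and scalar summands, and for each summand reduce \eqref{a12} and \eqref{a13} to the $\mathfrak{sp}_{2n-2}$-basicness and normalization of $\tau_{2n-2}$ from \cite{FFS} on the Weyl factor, and to the derivation/trace property of $\phi$ on the Dunkl-Weyl factor. The only cosmetic difference is that the paper explicitly checks the two nonscalar cases and leaves the $\gamma\in\c((\hb_1))((\hb_2))$ case to the reader, whereas you spell out all three.
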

 \begin{proof}
 We verify this proposition for the case when $\gamma= \mu \otimes 1$ with $\mu \in \mathfrak{sp}_{2n-2}$ and the case when $\gamma= 1 \otimes \nu$ with $\nu \in \mathfrak{u}_{1}$. The case when $\gamma \in \c((\hb_1))((\hb_2))$ is easy to handle and left to the reader.\\

 For $\gamma=\mu \otimes 1$, $[\gamma, a_j \otimes b_j]_{\hb_1}=[\mu,a_j]_{\hb_1} \otimes b_j$.  In this case,
 \begin{eqnarray}
 \nonumber \sum_{j=0}^{2n-2} \psi_{2n-2}(a_0 \otimes b_0,...,[\gamma,a_j \otimes b_j]_{\hb_1},...,a_{2n-2}\otimes b_{2n-2})\\
  \la{a14} =\sum_{j=0}^{2n-2}
 \tau_{2n-2}(a_0,...,[\mu,a_j]_{\hb_1},..,a_{2n-2}).\phi(b_0 \star .... \star b_{2n-2}) = 0 \text{.}\\
 \nonumber \sum_{j=1}^{2n-2} \psi_{2n-2}(a_0 \otimes b_0,....,\gamma,a_j \otimes b_j,...,a_{2n-3} \otimes b_{2n-3})\\
 \la{a15}=\sum_{j=1}^{2n-2}
 \tau_{2n-2}(a_0,...,\mu,a_j,..,a_{2n-3}).\phi(b_0 \star .... \star b_{2n-3}) = 0 \text{.}
 \end{eqnarray}
 The last equalities in ~\eqref{a14} and ~\eqref{a15} are because $\tau_{2n-2}$ is a $\mathfrak{sp}_{2n-2}$-basic cocycle by ~\cite{FFS}.\\

  For $\gamma= 1 \otimes \nu$, $[\gamma, a_j \otimes b_j]_{\hb_1}=a_j \otimes [\nu,b_j]_{\hb_1}$. In this case,
 \begin{eqnarray}
 \nonumber \sum_{j=0}^{2n-2} \psi_{2n-2}(a_0 \otimes b_0,...,[\gamma,a_j \otimes b_j]_{\hb_1},...,a_{2n-2}\otimes b_{2n-2})\\
 \nonumber =\sum_{j=0}^{2n-2} \tau_{2n-2}(a_0,...,a_{2n-2}).\phi(b_0 \star ...\star [\nu,b_j]_{\hb_1} \star...\star b_{n-2})\\
  \la{a16} = \tau_{2n-2}(a_0,...,a_{2n-2}).\phi([\nu,b_0 \star ...\star b_{2n-2}]_{\hb_1})=0 \text{.}\\
  \nonumber \sum_{j=1}^{2n-2} \psi_{2n-2}(a_0 \otimes b_0,....,\gamma,a_j \otimes b_j,...,a_{2n-3} \otimes b_{2n-3})\\
 \la{a17}  = \sum_{j=1}^{2n-2} \tau_{2n-2}(a_0,...,1,a_j,...,a_{2n-3}).\phi(b_0 \star ...\star \nu \star b_j \star ..\star b_{2n-3})=0 \text{.}
 \end{eqnarray}
 The last equality in ~\eqref{a17} is because the cocycle $\tau_{2n-2}$ is normalized (see ~\cite{FFS}).
 \end{proof}

\subsection{A local Riemann-Roch theorem.}
For notational brevity, we use $\K$ to denote $\c((\hb_1))((\hb_2))$
in this section. Fix an $N \gg n$. Put $R:=\W_{n-1}
\otimes_{\c((\hb_1))} \dk(\hb_1,\hb_2))$. Let
$\mathfrak{g}:=\mathfrak{gl}_N(R)$ and note that
$\mathfrak{sp}_{2n-2}(\K)$ and $\mathfrak{u}_{1}(\K)$ are Lie
subalgebras of $\mathfrak{gl}_n(R)$ via ~\eqref{a11} and the
diagonal embedding $R \hookrightarrow \mathfrak{gl}_n(R)$.  Let
$\mathfrak{h}:= \mathfrak{sp}_{2n-2}(\K) \oplus \mathfrak{u}_{1}(\K)
\oplus \mathfrak{gl}_N(\K)$.
By section 3.1 and 3.2 of ~\cite{FFS}, $\psi_{2n-2}$ may be viewed as a $2n-2$-Lie cocycle $\Psi_{2n-2} \in \text{C}^{2n-2}_{\text{Lie}}(\mathfrak{g},\mathfrak{h};\mathfrak{g}^*)$. Let $\text{ev}_1:\text{C}^{2n-2}_{\text{Lie}}(\mathfrak{g},\mathfrak{h};\mathfrak{g}^*) \rar \text{C}^{2n-2}_{\text{Lie}}(\mathfrak{g},\mathfrak{h};\K)$ is evaluation at the identity. We compute $\text{ev}_1\Psi_{2n-2}$. For this purpose, we closely follow Section 5 of ~\cite{PPT}.\\

 Choose coordinates $p_1,..,p_{n-1},q_1,...,q_{n-1}$ of $\R^{2n-2}$ and a coordinate $z$ of $\c^1$. For an element $a \in R$, let $a_i$ denote the homogenous component of degree $i$. Note that $a_2=\sum_{i,j} a_{ij} w_iw_j+\alpha z^2+\beta z\bar{z}+\lambda \bar{z}^2$ where $w_{2i-1}=p_i$ and $w_{2i}=q_i$. Let $a_2':=a_2-\alpha.z^2-\lambda \bar{z}^2$. Then, one has a $\mathfrak{h}$-equivariant projection
 \begin{eqnarray}
 \nonumber \text{pr}:\mathfrak{g} \rar \mathfrak{h} \\
 \la{1.4.1} M \otimes a \mapsto \frac{\text{tr}(M)}{N}a_2'+Ma_0 \text{.}
 \end{eqnarray}
Let $C \in \text{Hom}_{\K}(\wedge^2\mathfrak{g}, \mathfrak{h})$ be the curvature of $\text{pr}$ with $$C(v,w)=[\text{pr}(v),\text{pr}(w)]_{\hb_1}-\text{pr}([v,w]_{\hb_1}) \text{.}$$
One then has a Chern-Weil homomorphism $\chi: (S^{\bullet}\mathfrak{h}^*)^{\mathfrak{h}} \rar \text{C}^{2\bullet}(\mathfrak{g},\mathfrak{h};\K)$ defined by
\begin{eqnarray}
  \nonumber \chi(P)(v_1\wedge...\wedge v_{2k})=\frac{1}{k!} \sum_{\stackrel{\sigma \in S_{2k}}{\sigma(2i-1)<\sigma(2i)}} {(-1)}^{\sigma}P(C(v_{\sigma(1)},v_{\sigma(2)}),....,C(v_{\sigma(2k-1)},v_{\sigma(2k)})) \text{.}
      \end{eqnarray}
\begin{prop} \la{p1.4.1} For $N>>n$, $\chi: (S^{q}\mathfrak{h}^*)^{\mathfrak{h}} \rar \text{H}^{2q}(\mathfrak{g},\mathfrak{h};\K)$ is an isomorphism for $q \leq n-1$.
\end{prop}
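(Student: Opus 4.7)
The proof strategy closely follows Section 5 of \cite{PPT}, relying on the Fedosov-Feigin-Shoikhet computation \cite{FFS} combined with a Loday-Quillen-Tsygan stabilization argument. First, I would verify that $\chi$ is a well-defined homomorphism of graded $\K$-algebras: since $\text{pr}$ is $\mathfrak{h}$-equivariant by construction and the curvature $C$ takes values in $\mathfrak{h}$, the cochains $\chi(P)$ are closed in the relative Chevalley-Eilenberg complex and their cohomology classes depend only on $P$. The source $(S^\bullet \mathfrak{h}^*)^{\mathfrak{h}}$ decomposes by reductivity as a tensor product
$$ (S^\bullet \mathfrak{sp}_{2n-2}^*)^{\mathfrak{sp}_{2n-2}} \otimes_{\K} (S^\bullet \mathfrak{u}_1^*)^{\mathfrak{u}_1} \otimes_{\K} (S^\bullet \mathfrak{gl}_N^*)^{\mathfrak{gl}_N} $$
of classical polynomial rings on Pontryagin and Chern classes.

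To compute the target $H^{2q}(\mathfrak{g}, \mathfrak{h}; \K)$ in the range $q \leq n-1$, I would filter the relative Chevalley-Eilenberg complex by the order of vanishing in the augmentation ideal of $R := \W_{n-1} \otimes_{\c((\hbar_1))} \dk((\hbar_1))((\hbar_2))$, producing a spectral sequence whose $E_1$ page is expressible in terms of Hochschild-type cochains of $R$. Stabilization via Loday-Quillen-Tsygan, valid precisely for $N \gg q$, reduces the computation to a K\"unneth-type combination of the $\mathfrak{sp}_{2n-2}$-basic Hochschild cohomology of $\W_{n-1}((\hbar_1))$ (controlled by Theorem \ref{one} and made relative by \cite{FFS}) with the $\mathfrak{u}_1$-basic Hochschild cohomology of $\dk((\hbar_1))((\hbar_2))$ (controlled by Proposition \ref{p2}, Theorem \ref{two} and Proposition \ref{p3}). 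Taking $\mathfrak{gl}_N(\K)$-basic cochains further restricts to the invariant theory factor.

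The main obstacle will be matching the Chern-Weil generators produced by $\chi$ with the classes emerging from the spectral sequence. Concretely, one must exhibit transgressions showing that the $\mathfrak{sp}_{2n-2}$-generator maps to the FFS cocycle $\tau_{2n-2}$, that the $\mathfrak{u}_1$-generator maps to the trace $\phi$ via the identification of Proposition \ref{p3}, and that the $\mathfrak{gl}_N$-generators map to the standard matrix trace cocycles. The construction of $\psi_{2n-2}$ via \eqref{defpsi} together with the basicness result of Proposition \ref{p4} is precisely the prototype transgression in top degree $2n-2$. The restriction $q \leq n-1$ arises because the Hochschild dimension of $\W_{n-1}((\hbar_1))$ is $2(n-1)$, beyond which additional classes from the $\dk$ factor enter and the pure tensor product identification via Chern-Weil fails to exhaust the cohomology; simultaneously, the Loday-Quillen-Tsygan stabilization requires $N \gg q$, which is ensured by $N \gg n$. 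Once this compatibility is verified in the manner of Section 5 of \cite{PPT}, $\chi$ is identified as an isomorphism in the claimed range.
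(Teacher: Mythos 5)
Your proof is correct and follows the same route as the paper, which gives a one-line reduction to Proposition 5.2 of the Feigin--Felder--Shoikhet paper and notes that the only input changed in replacing $\W_n$ with $R = \W_{n-1}((\hb_1))\otimes_{\c((\hb_1))}\dk((\hb_1))((\hb_2))$ is the computation $\text{H}^p(\mathfrak{gl}_N(R), S^q\mathfrak{gl}_N(R)^*) \cong \K$ for $p = 2n-2$ and $0$ otherwise. Your sketch correctly reproduces the skeleton of that FFS/PPT argument (Chern--Weil well-definedness, decomposition of $(S^{\bullet}\mathfrak{h}^*)^{\mathfrak{h}}$, Loday--Quillen--Tsygan stabilization, transgression matching), and the paper's highlighted input is precisely what your ``K\"unneth-type'' reduction of the $E_1$-page to the basic Hochschild cohomologies of the two tensor factors is aiming at.
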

\begin{proof} The proof is almost identical to that of Proposition 5.2 in ~\cite{FFS}. The only (minor) difference is that for $p,q \geq 0$, $$\text{H}^p(\mathfrak{gl}_N(R),S^q\mathfrak{gl}_N(R)^*) = \K \text{ when } p=2n-2 \text{ and } 0 \text{ otherwise.}$$
\end{proof}
Form the subspace $$W_{n-1,1,N}:=\{\sum_i f_ip_i \otimes 1+ gz\bar{z} \otimes 1 +\sum_k h_k \otimes M_k \text{  }|\text{  } f_i,g,h_k \in \K[q_1,..,q_{n-1},z^2]\} \text{.}$$
\begin{prop}
$W_{n-1,1,N}$ is a Lie-subalgebra of $\mathfrak{gl}_n(R)$.
\end{prop}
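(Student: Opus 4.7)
My plan is a direct case-by-case verification that the bracket of two generators of $W_{n-1,1,N}$ lies again in $W_{n-1,1,N}$. The defining presentation distinguishes three types of generators: (a) $fp_i \otimes 1$, (b) $gz\bar{z} \otimes 1$, and (c) $h \otimes M$ with $M \in \mathfrak{gl}_N(\K)$, where $f,g,h$ range over $\K[q_1,\dots,q_{n-1},z^2]$. I would compute each of the six corresponding bracket types and confirm closure.

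The computation rests on two elementary observations. First, $\K[q_1,\dots,q_{n-1},z^2]$ is a commutative subalgebra of $R = \W_{n-1} \otimes_{\c((\hb_1))} \dk((\hb_1))((\hb_2))$: the variables $q_i$ sit in a commutative subalgebra of the first tensor factor, $z^2$ sits in a commutative subalgebra of the second factor, and the two factors commute with each other. Second, for $f \in \K[q_1,\dots,q_{n-1},z^2]$ one has $[p_i,f]_{\hb_1} = \hb_1\partial_{q_i}f$ (since $p_i$ commutes with $z^2$), and by equation \eqref{aht}, $[z\bar{z},f]_{\hb_1} = i\hb_1(z\partial_z f - \bar{z}\partial_{\bar{z}}f) = 2i\hb_1\, z^2(\partial_{z^2}f)$, using $\partial_{\bar{z}}f = 0$ and $\partial_z f = 2z(\partial_{z^2}f)$. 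Both commutators again lie in $\K[q_1,\dots,q_{n-1},z^2]$.

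The six bracket types then follow from short manipulations. The (a)-(a) bracket equals $\hb_1(f\partial_{q_i}f')p_j - \hb_1(f'\partial_{q_j}f)p_i$, still of type (a). The mixed (a)-(b) bracket gives
\[
[fp_i \otimes 1,\, gz\bar{z} \otimes 1]_{\hb_1} \;=\; \hb_1 f(\partial_{q_i}g)\, z\bar{z} \otimes 1 \;-\; 2i\hb_1\, g z^2(\partial_{z^2}f)\, p_i \otimes 1,
\]
a sum of a type (b) element and a type (a) element; here I use that $p_i$ and $z\bar{z}$ commute, being in different tensor factors of $R$. The (b)-(b) bracket expands, using the second observation, to $(g[z\bar{z},g'] - g'[z\bar{z},g])\, z\bar{z} \otimes 1$, which is again of type (b) since the scalar coefficient lies in $\K[q,z^2]$. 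The three brackets involving a matrix factor are simpler: for (a)-(c) and (b)-(c), the ring commutators $[fp_i,h]_{\hb_1}$ and $[gz\bar{z},h]_{\hb_1}$ land in $\K[q_1,\dots,q_{n-1},z^2]$ by the observations above, so the result is of the form $H \otimes M$ with $H$ polynomial; for (c)-(c), the commuting scalars collapse the bracket to $hh' \otimes [M,M']$, still of type (c).

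The only genuine ``obstacle'' is bookkeeping: one must keep careful track of which of the three tensor factors $\W_{n-1}$, $\dk((\hb_1))((\hb_2))$, and $\mathfrak{gl}_N(\K)$ each quantity inhabits, and use the correct commutation identity to move $p_i$ past functions of $q$, or $z\bar{z}$ past functions of $z^2$. Once the two identities of the second paragraph are recorded, the verifications are mechanical and the proposition follows.
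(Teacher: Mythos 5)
Your proof is correct and follows essentially the same route as the paper's: a direct case-by-case verification that brackets of generators land back in $W_{n-1,1,N}$, with equation~\eqref{aht} as the key input for commutators against $z\bar z$. The paper only writes out the $(b)$--$(b)$ case for monomials $\tilde g_j(q)\,z^{2k_j}z\bar z\otimes 1$ and leaves the rest as ``direct computations,'' whereas you make the argument uniform by first isolating the two reusable facts --- that $\K[q_1,\dots,q_{n-1},z^2]$ is a commutative subalgebra of $R$, and that ad$\,p_i$ and ad$\,z\bar z$ preserve it --- which is a cleaner way to package the same computation.
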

\begin{proof}
$W_{n-1,1,N}$ is clearly a subspace of $\mathfrak{gl}_n(R)$. The
rest follows from direct computations. For example, if
$g_1=\tilde{g}_1(q_1,...,q_{n-1})z^{2k_1}$ and if
$g_2=\tilde{g}_2(q_1,...,q_{n-1})z^{2k_2}$, then
\begin{eqnarray*}
[g_1z\bar{z} \otimes 1,g_2z\bar{z} \otimes 1]_{\hb_1} = \tilde{g}_1\tilde{g}_2[z^{2k_1}z\bar{z},z^{2k_2}z\bar{z}]_{\hb_1} \otimes 1 \text{.}
\end{eqnarray*}
Since
\begin{eqnarray*}
[z^{2k_1}z\bar{z},z^{2k_2}z\bar{z}]_{\hb_1}=z^{2k_1}[z\bar{z},z^{2k_2}]_{\hb_1}z\bar{z}-z^{2k_2}[z\bar{z},z^{2k_2}]_{\hb_1}z\bar{z}=i\hb_1(2k_2-2k_1)z^{2k_1+2k_2}z\bar{z}\text{ ,} \\
\text{  }[g_1z\bar{z}\otimes 1,g_2z\bar{z}\otimes 1]_{\hb_1}=2i(k_2-k_1)\tilde{g}_1\tilde{g}_2z^{2k_1+2k_2}z\bar{z} \otimes 1 \in W_{n-1,1,N} \text{.}
\end{eqnarray*}
Note that in the above computation, we have used equation ~\eqref{aht}.
\end{proof}

Put $\mathfrak{h}_1= W_{n-1,1,N} \cap \mathfrak{h}$. Then,
$\mathfrak{h}_1=\mathfrak{gl}_{n-1}(\K) \oplus \mathfrak{u}_{1}(\K)
\oplus \mathfrak{gl}_N(\K)$. Denote by $\widetilde{W}_{n-1,1,N}$ the
Lie subalgebra of $W_{n-1,1,N}$ of elements of the form $\sum_i
f_ip_i \otimes 1+gz\bar{z} \otimes 1+\sum_k h_k \otimes M_k$ where
$f_i,g,h_k \in \K[q_1,...,q_{n-1}]$.

\begin{prop} \la{p1.4.2}
The Chern-Weil homomorphism
$\chi:(S^q{\mathfrak{h}_1}^*)^{\mathfrak{h}_1} \rar
\text{H}^{2q}(W_{n-1,1,N},\mathfrak{h}_1;\K)$ is injective, for $q
\leq n-1$. Further, $\text{H}^{2q}(W_{n-1,1,N},\mathfrak{h}_1;\K)
\cong \text{C}^{2q}(\widetilde{W}_{n-1,1,N}, \mathfrak{h}_1;\K)$.
\end{prop}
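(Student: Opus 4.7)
The approach is to first reduce the pair $(W_{n-1,1,N}, \mathfrak{h}_1)$ to $(\widetilde{W}_{n-1,1,N}, \mathfrak{h}_1)$ using a weight argument, and then to identify the cohomology of the resulting complex with a specific space of cochains by an argument analogous to that of Proposition~5.2 in~\cite{FFS}, with the Chern--Weil injectivity statement being an immediate consequence.

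The key reduction step uses that $z\bar{z} \in \mathfrak{u}_1(\K) \subset \mathfrak{h}_1$. By equation~\eqref{aht}, $\ad_{z\bar{z}}$ multiplies $z^{2k}$ by $2ki\hb_1 \in \K^\times$ (for $k \geq 1$) and annihilates the generators $p_j$, $q_j$, the central element $z\bar{z}$, and the matrix units $M_k$, giving $W_{n-1,1,N}$ a $\Z_{\geq 0}$-grading whose weight-zero component is precisely $\widetilde{W}_{n-1,1,N}$. Applying the $\mathfrak{h}_1$-basic condition~\eqref{a12} to a cochain $\phi$ with $\gamma = z\bar{z}$ yields $2i\hb_1\bigl(\sum_j k_j\bigr)\phi(v_1,\ldots,v_p) = 0$ when the $v_j$ have weights $2k_j$, so $\phi$ vanishes on inputs of nonzero total weight. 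Since all weights lie in $\Z_{\geq 0}$, every input of a non-vanishing cochain must have weight zero, i.e., lie in $\widetilde{W}_{n-1,1,N}$. Because $\widetilde{W}_{n-1,1,N}$ is a Lie subalgebra containing $\mathfrak{h}_1$, this identifies $C^{\bullet}(W_{n-1,1,N}, \mathfrak{h}_1; \K)$ with $C^{\bullet}(\widetilde{W}_{n-1,1,N}, \mathfrak{h}_1; \K)$ as cochain complexes, so their cohomologies coincide.

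It remains to show that $\text{H}^{2q}(\widetilde{W}_{n-1,1,N}, \mathfrak{h}_1; \K) \cong \text{C}^{2q}(\widetilde{W}_{n-1,1,N}, \mathfrak{h}_1; \K)$ for $q \leq n-1$, which is the main technical obstacle. The plan is to exploit the semisimple action of $\mathfrak{h}_1 = \mathfrak{gl}_{n-1}(\K) \oplus \mathfrak{u}_1(\K) \oplus \mathfrak{gl}_N(\K)$ on $\widetilde{W}_{n-1,1,N}/\mathfrak{h}_1$. Using the splitting $\widetilde{W}_{n-1,1,N} = (\K[q]\otimes\langle p_j \rangle) \oplus (\K[q] \cdot z\bar{z}) \oplus (\K[q]\otimes\mathfrak{gl}_N(\K))$ and filtering by polynomial degree in $q$, the associated graded Lie algebra takes a transparent form, with polynomial Hamiltonian vector fields acting on two commuting abelian modules. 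Classical invariant theory then shows that, in the stable range $q \leq n-1$, the $\mathfrak{h}_1$-invariant cochains of degree $2q$ are generated by contractions built from the curvature of the natural projection $\widetilde{W}_{n-1,1,N} \to \mathfrak{h}_1$; such cochains are automatically closed by the Jacobi identity, and no coboundaries can appear from $C^{2q-1}$ in this range by the same weight/invariance bookkeeping, so cocycles exhaust all cochains in degree $2q$. Injectivity of $\chi$ now follows, exactly as in the proof of Proposition~\ref{p1.4.1}, by observing that distinct elements of $(S^q\mathfrak{h}_1^*)^{\mathfrak{h}_1}$ produce linearly independent cochains through this identification.
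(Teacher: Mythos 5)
Your first step is sound and matches the paper's approach: using the central element $\sigma_2 = z\bar z \otimes 1 \in \mathfrak{u}_1(\K) \subset \mathfrak{h}_1$ as a grading operator, $\mathfrak{h}_1$-invariance forces a basic cochain to vanish unless every input has $\sigma_2$-weight zero, and since those weights lie in $2i\hb_1\cdot\Z_{\geq 0}$ this identifies $\text{C}^{\bullet}(W_{n-1,1,N},\mathfrak{h}_1;\K)$ with $\text{C}^{\bullet}(\widetilde W_{n-1,1,N},\mathfrak{h}_1;\K)$ as complexes. That is exactly the role $\sigma_2$ plays in the paper. (One slip: equation~\eqref{a12} is the $\mathfrak{h}$-invariance property of the \emph{Hochschild} cocycle $\psi_{2n-2}$; here you need the defining invariance condition on relative \emph{Lie} cochains, which is formally analogous but not the cited equation.)

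The gap is in the second half, which you yourself flag as "the main technical obstacle" and then do not actually close. You assert, without argument, that every $\mathfrak{h}_1$-basic $2q$-cochain on $\widetilde W_{n-1,1,N}$ is a curvature contraction (hence a cocycle), that $\text{C}^{2q-1}$ contributes no coboundaries, and that injectivity of $\chi$ then follows; the phrases "classical invariant theory" and "the same weight/invariance bookkeeping" stand in for all three claims. The paper's proof does not proceed this way: it invokes Proposition 5.2 of \cite{PPT}, whose mechanism is a \emph{second} grading by $\sigma_1 = \sum_j q_j p_j \otimes 1$. There, basic cochains must also have total $\sigma_1$-weight zero; the only negative $\sigma_1$-weight appearing in $\widetilde W_{n-1,1,N}/\mathfrak{h}_1$ is $-1$, carried by the $(n-1)$-dimensional $\mathfrak{gl}_{n-1}$-module spanned by the $p_j \otimes 1$; and a weight-counting argument combined with $\mathfrak{gl}_{n-1}\oplus\mathfrak{gl}_N$-invariance is what kills the differential and identifies $\text{H}^{2q}$ with $\text{C}^{2q}(\widetilde W_{n-1,1,N},\mathfrak{h}_1;\K)$. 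You mention the splitting and a filtration by $q$-degree, but you never run that analysis, and in particular never justify the key claim that $d\colon \text{C}^{2q-1}\to \text{C}^{2q}$ vanishes (or that $\text{C}^{2q-1}$ is suitably constrained). Without that step you have only the reduction to $\widetilde W_{n-1,1,N}$, not the proposition.
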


\begin{proof}
The proof of Proposition 5.2 in ~\cite{PPT} goes through in this case as well with minor changes which we shall point out. $\gamma=\text{Id}$ and $W^\gamma_{k,n-k,N}$ is replaced by $W_{n-1,1,N}$. With notation paralleling that in the proof of Proposition 5.2 on ~\cite{PPT}, the basis of $W_{n-1,1,N}$ comprises of $q^{\alpha_j}z^{2\beta_j}p_j \otimes 1$, $q^{\alpha}z^{2\beta}z\bar{z} \otimes 1$ and $q^{\alpha_{st}}z^{2\beta_{st}} \otimes E_{st}$ where $\alpha,\alpha_j$ and $\alpha_{st}$ are multiindices, $\beta_j,\beta,\beta_{st} \in \mathbb N \cup\{0\}$ and $E_{st}$ denotes an elementary matrix. As in ~\cite{PPT}, we study the $\mathfrak{h}_1$-action on $\text{Hom}_{\K}(\wedge^{\bullet}(W_{n-1,1,N}),\K)$. The elements $\sigma_1:=\sum_{j=1}^{n-1}q_jp_j \otimes 1$ and $\sigma_2:=z\bar{z} \otimes 1$ act diagonally in this case as well. The formula for the $\sigma_1$-action is unchanged from that in ~\cite{PPT}. That for the $\sigma_2$-action is changed from the corresponding formula in ~\cite{PPT} by a factor of $i:=\sqrt{-1}$. We present the formulas here in order to be explicit.
\begin{eqnarray*}
 \text{ } [\sigma_1,q^{\alpha_j}z^{2\beta_j}p_j \otimes 1]_{\hb_1} = ((\sum_{l=1}^{n-1}\alpha_j^l)-1)q^{\alpha_j}z^{2\beta_j}p_j \otimes 1\\
\text{  }[\sigma_1,q^{\alpha}z^{2\beta}z\bar{z} \otimes 1]_{\hb_1}= (\sum_{l=1}^{n-1}\alpha^l)q^{\alpha}z^{2\beta}z\bar{z} \otimes 1\\
\text{ }[\sigma_1,q^{\alpha_{st}}z^{2\beta_{st}} \otimes E_{st}]_{\hb_1}= (\sum_{l=1}^{n-1}\alpha_{st}^l)q^{\alpha_{st}}z^{2\beta_{st}} \otimes E_{st}\\
\text{ }[\sigma_2,q^{\alpha_j}z^{2\beta_j}p_j \otimes 1]_{\hb_1}= 2i\beta_jq^{\alpha_j}z^{2\beta_j}p_j \otimes 1\\
\text{ }[\sigma_2,q^{\alpha}z^{2\beta}z\bar{z} \otimes 1]_{\hb_1}=2i\beta q^{\alpha}z^{2\beta}z\bar{z} \otimes 1\\
\text{ }[\sigma_2,q^{\alpha_{st}}z^{2\beta_{st}} \otimes E_{st}]_{\hb_1}=2i\beta_{st}q^{\alpha_{st}}z^{2\beta_{st}} \otimes E_{st}
\end{eqnarray*}
The rest of the proof proceeds as in the proof of Proposition 5.2 of ~\cite{PPT} with the obvious modifications.
\end{proof}
For $X=X_1 \oplus X_2 \oplus X_3 \in \mathfrak{sp}_{2n-2}(\K) \oplus \mathfrak{u}_{1}(\K) \oplus \mathfrak{gl}_N(\K)$, let
$$ \hat{A}_{\hb_1}\text{Ch}_{\phi}\text{Ch}(X):=(\text{det}(\frac{\frac{\hb_1X_1}{2}}{\text{sinh}(\frac{\hb_1X_1}{2})}))^{\frac{1}{2}}\phi(\text{exp}_{\star}(X_2))
\text{tr}(\text{exp}(X_3)) \text{.}$$
Here, $\star$ denotes the star product in $\dk((\hb_1))((\hb_2))$ and $X_2 \in \mathfrak{u}_{1}(\K)$ is viewed as an element of $\dk((\hb_1))((\hb_2))$ via equation ~\eqref{a11}. We are now in a position to state the local Riemann-Roch.
\begin{theorem} \la{five}
$$[\text{ev}_1\Psi_{2n-2}] = {(-1)}^{n-1} \chi((\hat{A}_{\hb_1}\text{Ch}_{\phi}\text{Ch})_{n-1}) \text{.}$$
\end{theorem}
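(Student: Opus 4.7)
The plan is to follow the Chern--Weil strategy of Section~5 of \cite{PPT}, exploiting the product factorization of $\psi_{2n-2}$ from \eqref{defpsi} to split the computation into three essentially independent blocks, one over $\mathfrak{sp}_{2n-2}(\K)$, one over $\mathfrak{u}_1(\K)$ and one over $\mathfrak{gl}_N(\K)$. By Proposition~\ref{p1.4.1} the Chern--Weil map $\chi$ is an isomorphism in the relevant range for $(\mathfrak{g},\mathfrak{h})$, so the class $[\text{ev}_1\Psi_{2n-2}]$ is determined by its pullback under the inclusion $W_{n-1,1,N}\hookrightarrow\mathfrak{g}$. Proposition~\ref{p1.4.2} then identifies $\text{H}^{2n-2}(W_{n-1,1,N},\mathfrak{h}_1;\K)$ with $\text{C}^{2n-2}(\widetilde{W}_{n-1,1,N},\mathfrak{h}_1;\K)$ and shows injectivity of $\chi$ on $\mathfrak{h}_1$. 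Together these reduce Theorem~\ref{five} to a cochain-level identity on $\widetilde{W}_{n-1,1,N}$.

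Next I would compute the curvature $C$ of the projection $\text{pr}$ of \eqref{1.4.1} on $\widetilde{W}_{n-1,1,N}$. Because the generators $q^{\alpha}p_j\otimes 1$, $q^{\alpha}z\bar{z}\otimes 1$ and $q^{\alpha}\otimes E_{st}$ interact only within their own blocks (the Weyl-algebra, $U(1)$, and matrix variables commute or bracket into their own classes), the curvature decomposes as a direct sum $C = C_1\oplus C_2\oplus C_3$ taking values in $\mathfrak{sp}_{2n-2}(\K)$, $\mathfrak{u}_1(\K)$ and $\mathfrak{gl}_N(\K)$ respectively. The outer two blocks are exactly the curvatures appearing in the calculations of \cite{FFS} and \cite{PPT}.

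Using \eqref{defpsi}, the restriction of $\text{ev}_1\Psi_{2n-2}$ to $\widetilde{W}_{n-1,1,N}$ becomes an antisymmetrized product of three cochains evaluated against the three blocks of the curvature. The Weyl-algebra factor is the Feigin--Felder--Shoikhet cocycle $\tau_{2n-2}$, whose Chern--Weil class is $(-1)^{n-1}$ times the $\hat{A}_{\hb_1}$-polynomial evaluated on $C_1$; the sign is the standard one coming from the $2(n-1)$-dimensional symplectic piece. The matrix-valued factor contributes the usual Chern character $\text{tr}\exp(X_3)$ by a direct Chern--Weil computation. Multiplicativity of $\chi$ then assembles these two contributions correctly inside $\chi((\hat{A}_{\hb_1}\text{Ch}_{\phi}\text{Ch})_{n-1})$.

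The main obstacle is identifying the middle factor with $\phi(\exp_{\star}(X_2))$. For this I would use the explicit trace formula \eqref{a2} together with the $\sigma_2$-weight calculations carried out in the proof of Proposition~\ref{p1.4.2} to match the iterated evaluations $\phi(g_1 z\bar{z}\star\cdots\star g_{k} z\bar{z})$ against the Taylor coefficients in $X_2$ of $\phi(\exp_\star(X_2))$. The $\mathfrak{sp}_{2n-2}\oplus\mathfrak{u}_1$-basicity established in Proposition~\ref{p4}, combined with the normalization $\phi(1)=1$ from Theorem~\ref{two}, constrains this middle factor to be the Chern--Weil image of $\phi(\exp_\star(X_2))$, so that assembling the three pieces and taking the degree $n-1$ component yields $(-1)^{n-1}\chi((\hat{A}_{\hb_1}\text{Ch}_{\phi}\text{Ch})_{n-1})$, completing the proof.
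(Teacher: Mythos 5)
Your overall strategy agrees with the paper's: both reduce, via Propositions \ref{p1.4.1} and \ref{p1.4.2}, to a cochain-level identity on $\widetilde{W}_{n-1,1,N}$, and both ultimately exploit the product structure of $\psi_{2n-2}$ in \eqref{defpsi}. But there is a real gap where you pivot from the reduction step to the claimed factorization.

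Your central assertion — that ``the restriction of $\text{ev}_1\Psi_{2n-2}$ to $\widetilde{W}_{n-1,1,N}$ becomes an antisymmetrized product of three cochains evaluated against the three blocks of the curvature'' — is not established, and it is not an automatic consequence of the direct-sum decomposition of $\mathfrak{h}$. The curvature $C$ does indeed take values in $\mathfrak{h}_1 = \mathfrak{gl}_{n-1}(\K)\oplus\mathfrak{u}_1(\K)\oplus\mathfrak{gl}_N(\K)$, but a Lie cocycle on $\mathfrak{g}$ does not automatically refine to a product of a polynomial in $C_1$, a polynomial in $C_2$, and a polynomial in $C_3$. What bridges this gap in the paper is the computation via the specific elements $u_{ij}$, $v_{ir}$, $w_i$: these are chosen so that $\text{pr}(u_{ij})=\text{pr}(v_{ir})=\text{pr}(w_i)=0$ while $[p_i,u_{ij}]_{\hb_1}=p_jq_j$, $[p_i,v_{ir}]_{\hb_1}=E_{rr}$, $[p_i,w_i]_{\hb_1}=z\bar{z}$. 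This yields equation \eqref{1.4.3},
$(\text{ev}_1\Psi_{2n-2})(p_1\wedge x_1\wedge\cdots\wedge p_{n-1}\wedge x_{n-1}) = (-1)^{n-1}\,Q(\partial x_1/\partial q_1,\dots,\partial x_{n-1}/\partial q_{n-1})$,
which is what actually determines the invariant polynomial $Q$ and produces the sign $(-1)^{n-1}$. Without this device you have no handle on $Q$ as an element of $(S^{n-1}\mathfrak{h}^*)^{\mathfrak{h}}$; multiplicativity of $\chi$ alone does not tell you which element of $(S^{n-1}\mathfrak{h}^*)^{\mathfrak{h}}$ corresponds to $[\text{ev}_1\Psi_{2n-2}]$.

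Your proposed treatment of the middle factor is also off target. You suggest matching $\phi(g_1 z\bar{z}\star\cdots\star g_k z\bar{z})$ against Taylor coefficients of $\phi(\exp_\star(X_2))$ using the $\sigma_2$-weight computations from Proposition \ref{p1.4.2}; but those weight calculations serve only to establish the injectivity/identification statements of that proposition, not to compute a trace of a product. The paper instead obtains the formula \eqref{1.4.4}, which exhibits $Q$ (on the Cartan) as $\text{tr}(M_1\cdots M_{n-1})\,\phi(b_1\star\cdots\star b_{n-1})$ times the FFS integral factor, and then evaluates on $Y+Z$ with $Y=\sum_i\nu_i q_ip_i+\sum_r\sigma_r E_{rr}$, $Z=\tau z\bar{z}$ as in Theorem 5.3 of \cite{PPT}. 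That is the step that isolates $\text{Ch}_\phi(Z)=\phi(\exp_\star(Z))$ as a clean factor. Your proposal identifies the right ingredients but leaves the pivotal computation — how $Q$ is pinned down and why it splits multiplicatively — unaddressed.
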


\begin{proof}
By Proposition ~\ref{p1.4.1}, there exists a unique polynomial $Q
\in (S^{n-1}\mathfrak{h}^*)^{\mathfrak{h}}$ such that
$[\text{ev}_1\Psi_{2n-2}]=\chi(Q)$. Since $Q$ is
$\mathfrak{h}$-invariant, it is determined by its value on the
Cartan subalgebra $\mathfrak{a}$ of $\mathfrak{h}$. $\mathfrak{a}$
is spanned by $q_ip_i \otimes 1$ for $1\leq i\leq n-1$, $z\bar{z}
\otimes 1$ and $ 1 \otimes E_{rr}$ for $1 \leq r \leq N$. By
Proposition ~\ref{p1.4.1} and Proposition ~\ref{p1.4.2}, we have the
following commutative diagram, whose vertical arrows are injective
for $k \leq n-1$.
$$\begin{CD}
(S^k\mathfrak{h}^*)^{\mathfrak{h}} @>>> (S^k\mathfrak{h}_1^*)^{\mathfrak{h}_1} @>>\text{id}> (S^k\mathfrak{h}_1^*)^{\mathfrak{h}_1}\\
   @VV{\chi}V                             @VV{\chi}V                                @VV{\chi}V\\
\text{H}^{2k}(\mathfrak{g},\mathfrak{h};\K) @>>> \text{H}^{2k}(W_{n-1,1,N},\mathfrak{h}_1;\K) @>\cong>> \text{C}^{2k}(\tilde{W}_{n-1,1,N},\mathfrak{h}_1;\K)\\
\end{CD}$$
Further, $\mathfrak{h}$ and $\mathfrak{h}_1$ have the same Cartan subalgebra $\mathfrak{a}$. Since invariant polynomials are determined by their values on $\mathfrak{a}$, the horizontal arrow in the top left of the above diagram is an injection. It follows that the horizontal arrow in the bottom left of the above diagram is an injection as well. Therefore, in order to determine $Q$, one may work with the restriction of $\mathfrak{g}$ to $\widetilde{W}_{n-1,1,N}$. The identity required to prove the desired theorem then becomes an identity of cocycles rather than an identity at the level of cohomology. Hence, $\text{ev}_1\Psi_{2n-2}=\chi(Q)$ in $\text{C}^{2n-2}(\tilde{W}_{n-1,1,N},\mathfrak{h}_1;\K)$.\\

We now show that $Q={(-1)}^{n-1}(\hat{A}_{\hb_1}\text{Ch}_{\phi}\text{Ch})_{n-1}$ as follows. Put
$$u_{ij}:=-\frac{1}{2}q_i^2p_i\delta_{ij}+q_iq_jp_j,\,\,\,\,\,\, v_{ir}:=q_i \otimes E_{rr}\,\,\,\,\,\, w_i:=q_iz\bar{z} \text{.}$$
Note that
$$[p_i,u_{ij}]_{\hb_1}=p_jq_j \,\,\,\,\,\,[p_i,v_{ir}]_{\hb_1}=E_{rr}\,\,\,\,\,\,[p_i,w_i]_{\hb_1}=z\bar{z} \text{.}$$
Further, $\text{pr}(u_{ij})=\text{pr}(v_{ir})=\text{pr}(w_i)=0$. As in the proof of Lemma 5.3 of ~\cite{FFS}, it follows that
 \begin{equation} \la{1.4.3} (\text{ev}_1\Psi_{2n-2})(p_1\wedge x_1 ......\wedge p_{n-1} \wedge x_{n-1})= {(-1)}^{n-1}Q(\frac{\partial x_1}{\partial q_1},...,\frac{\partial x_{n-1}}{\partial q_{n-1}})\end{equation}
for any $x_1,..,x_{n-1}$ of the form $u_{ij}$ for $j \leq
i$,$v_{ir}$ or $w_i$. As in the proof of Lemma 5.3 in ~\cite{FFS},
equation ~\eqref{1.4.3} uniquely determines the polynomial $Q$.
Proceeding as in the proof of Lemma 5.3 of ~\cite{FFS}, one shows
that $Q$ coincides with the polynomial whose restriction to
$\mathfrak{a}$ satisfies
\begin{eqnarray}
\nonumber Q(M_1 \otimes a_1 \otimes b_1,...,M_{n-1} \otimes a_{n-1} \otimes b_{n-1})= \text{tr}(M_1...M_{n-1})\phi(b_1 \star ...\star b_{n-1})\\
 \la{1.4.4} \mu_{n-1}\int_{[0,1]^{n-1}} \prod_{ 1 \leq i \leq j \leq n-1}
 \text{e}^{\hb_1\psi(u_i-u_j)\alpha_{ij}}\times(a_1 \otimes...\otimes a_{n-1})du_1...du_{n-1} \text{.}\end{eqnarray}
Here, $\mu_{n-1}$ and $\psi$ are exactly as in Section 2.3 of ~\cite{FFS}. Proceeding exactly as in the proof of Theorem 5.3 of ~\cite{PPT}, one then shows that $Q(Y+Z)=(\hat{A}_{\hb_1}\text{Ch})(Y).\text{Ch}_{\phi}(Z)$ for $Y=\sum_i \nu_iq_ip_i+\sum_r\sigma_rE_{rr}$ and $Z=\tau z\bar{z}$ with $\nu_i,\sigma_r,\tau \in \K$. This proves the desired theorem.
\end{proof}

\noindent{\textbf{Remark.}} We note that every element $X_2 \in
\mathfrak{u}_1(\K)$ is of the form $\tau z \bar{z}$ for some $\tau
\in \K$. Applying Theorem~\ref{two}, one immediately obtains the
following formula for $\text{Ch}_{\phi}(X_2)$.
\begin{equation} \la{genGtr} \text{Ch}_{\phi}(X_2)= \sum_{k=0}^{\infty} \frac{(i \hb_1 \tau)^k}{k!}.\large\prod_{l=1}^{l=k} (\frac{l}{2}+{(-1)}^{l+1}\frac{2\lfloor \frac{l+1}{2} \rfloor \hb_2}{l+1}) \text{.} \end{equation}

\section{Hochschild homology of ${\mathfrak{A}}_{M/\Z_2}((\hb_1))((\hb_2))$.}

Let $M$ be a smooth, compact symplectic manifold with a symplectic $\Z_2$-action. Let $\omega$ denote the symplectic form on $M$. Let $M_2^{\gamma}$-denote a collection of codimension $2$ components of the submanifold $M^{\gamma}$ of points fixed by the $\Z_2$-action. In ~\cite{HT}, Halbout and Tang construct a quantization $\mathfrak{A}_{M/Z_2}((\hb_1))((\hb_2))$ of
 $\A_{M/\Z_2}((\hb_1))$ where $\A_{M/\Z_2}((\hb_1))$ is the $\Z_2$-invariant subalgebra of a quantization of $\text{C}^{\infty}(M)$ with characteristic class $\omega$. In this section, we compute $\hh_{\bullet}(\mathfrak{A}_{M/\Z_2}((\hb_1))((\hb_2)))$ as well as the Hochschild cohomology $\hh^{\bullet}(\mathfrak{A}_{M/\Z_2}((\hb_1))((\hb_2)),\mathfrak{A}_{M/\Z_2}((\hb_1))((\hb_2)))$. Starting from this section, we consider $\text{C}^{\infty}(M)$, $\A_{M/\Z_2}((\hb_1))$, and $\mathfrak{A}_{M/Z_2}((\hb_1))((\hb_2))$ as bornological algebras with their canonical bornologies (\cite{PPTT}, A.6.).
 \subsection{Construction of $\mathfrak{A}_{M/\Z_2}((\hb_1))((\hb_2))$.} To start with, we reformulate the construction $\mathfrak{A}_{M/\Z_2}((\hb_1))((\hb_2))$ of ~\cite{HT} in a way that makes it convenient to compute Hochschild homology. $\mathfrak{A}_{M/\Z_2}((\hb_1))((\hb_2))$ is constructed in three steps. Before proceeding with these steps, we fix a $\Z_2$-invariant almost complex structure on $M$. This also defines a $\Z_2$-invariant metric on $M$. Fix any sufficiently small positive real number $\epsilon$. Let $B_{\epsilon}$ denote an $\epsilon$-tubular neighborhood of $M_2^{\gamma}$. Let $N$ denote the normal bundle to $M_2^{\gamma}$. Let $N_{\epsilon}$ denote the $\epsilon$-neighborhood of the zero section of $N$. We require that $\epsilon$ be small enough for us to be able to identify $B_{\epsilon}$ with $N_{\epsilon}$ via the exponential map with respect to some $\Z_2$ invariant metric on $N$. We fix this exponential map, which we denote by $\text{exp}$.

 \subsubsection{Step 1: near $M^{\gamma}$.}
 {\it We first explain the construction in the vicinity of $M_2^{\gamma}$.}
 Let $P$ denote the principal $U(1)$-bundle of Hermitian frames on  $N$. Over $M_2^{\gamma}$, one has the {\it Dunkl-Weyl algebra bundle}
 $$\V:= P \times_{U(1)} \dk((\hb_1))((\hb_2)) \text{.}$$
 Let $J_1^{sp}M_2^{\gamma}$ denote the bundle of symplectic frames on $TM_2^{\gamma}$. One also has  a bundle of algebras
 $$\Ww:= J_1^{sp}M_2^{\gamma} \times_{\text{Sp}(2n-2)} \W_{n-1}((\hb_1))$$ over $M_2^{\gamma}$.
 ~\cite{HT} construct a Fedosov connection $D$ on $\bigwedge^{\bullet} T^*M_2^{\gamma} \otimes \Ww \otimes_{\c((\hb_1))} \V$. This makes $(\bigwedge^{\bullet} T^*M_2^{\gamma} \otimes \Ww \otimes_{\c((\hb_1))} \V,D)$ a sheaf of DG-algebras over $M_2^{\gamma}$. The corresponding sheaf of degree $0$-flat sections is a sheaf of algebras over $M_2^{\gamma}$, which we shall denote by $\Aa_{D}$. We note that for any open subset $U$ of $M_2^{\gamma}$, $\Gamma(U,\Aa_{D}) \cong \text{C}^{\infty}(U,\V)$ as vector spaces. Since $\dk((\hb_1))((\hb_2)) \cong \text{C}^{\infty}(\R^2)^{\Z_2}((\hb_1))((\hb_2))$ as vector spaces,
 $\Gamma(U,\Aa_{D}) \cong \text{C}^{\infty}(U,P \times_{U(1)} \text{C}^{\infty}(\R^2)^{\Z_2}((\hb_1))((\hb_2))) \cong \text{C}^{\infty}(N|_{U})^{\Z_2}((\hb_1))((\hb_2))$  as vector spaces. As a result, $\text{C}^{\infty}(N|_{U})^{\Z_2}((\hb_1))((\hb_2))$ inherits a new product $\star$ from $\Gamma(U,\Aa_{D})$. Let $\C^{\infty}(N)$ be the sheaf on $M_2^{\gamma}$ such that $\Gamma(U,\C^{\infty}(N))=\text{C}^{\infty}(N|_{U})$. The star product $\star$ inherited from $\Aa_{D}$ makes $\C^{\infty}(N)^{\Z_2}((\hb_1))((\hb_2))$ a sheaf of algebras on $M_2^{\gamma}$. \\

 Let $d$ denote the ($\Z_2$-invariant) Riemannian metric on $M$. Consider open subsets on $N$ that are of the form $\text{exp}^{-1}(N_{\alpha,U})$, where
 $$ N_{\alpha,U}:=\{(x \in M  \text{ }|\text{ } d(x,U) < \alpha \}$$
 where $U$ is an open subset of $M_2^{\gamma}$ and $0 < \alpha \leq \epsilon$. In what follows, we shall denote  $\text{exp}^{-1}(N_{\alpha,U})$ by $N_{\alpha,U}$ itself, since it shall be clear from the context whether we are referring to a tubular neighnorhood of $U$ or its image in $N|_{U}$ via the map $\text{exp}^{-1}$. Since the star product on $\dk((\hb_1))((\hb_2))$ is $\gamma$-local, the product on $\text{C}^{\infty}(N|_{U})^{\Z_2}((\hb_1))((\hb_2))$ restricts to one on $\text{C}^{\infty}(N_{\alpha,U})^{\Z_2}((\hb_1))((\hb_2))$. Clearly, the assignment
 $$ N_{\alpha,U} \mapsto (\text{C}^{\infty}(N_{\alpha,U})^{\Z_2}((\hb_1))((\hb_2)),\star) $$
 is a contravariant functor
 $$ \large( \{N_{\alpha,U}\},\subset \large) \rar \text{algebras} \text{.}$$

{ \it Near components of $M^{\gamma}$ that are not in $M_2^{\gamma}$.} On open subsets of $M$ that are of the form $N_{\alpha,U}$ with $\alpha \leq \epsilon$ and $U$ a subset of $M^{\gamma} \setminus M_2^{\gamma}$, we make the assignment
 $$ N_{\alpha,U} \mapsto (\text{C}^{\infty}(N_{\alpha,U})^{\Z_2}((\hb_1))((\hb_2)),\star_F) $$
 where $\star_F$ is the product coming from the Fedosov quantization of $\text{C}^{\infty}(M \setminus M_2^{\gamma})$ with Weyl curvature $\omega$.

 \subsubsection{Step 2: away from $M^{\gamma}$.}
 On $M^{-}:=M \setminus M^{\gamma}$, things are easy. Note that $M^-$ has a free $\Z_2$-action. The Fedosov quantization $\Aa_{M^-}((\hb_1))((\hb_2))$ of $\C^{\infty}(M^-)$ with Weyl curvature $\omega$ yields a sheaf of algebras on $M^-$ with a star product $\star_F$.

 \subsubsection{Step 3: patching.}
 Let $U$ be an open subset of $M_2^{\gamma}$. Let $N^*_{\alpha,U}:=N_{\alpha,U} \setminus \{x \text{ }| \text{ } x \in U\}$. Since the product on $\dk((\hb_1))((\hb_2))$ is $\gamma$-local, one further has restriction maps
 $$(\text{C}^{\infty}(N_{\alpha,U})^{\Z_2}((\hb_1))((\hb_2)),\star) \rar (\text{C}^{\infty}(N^*_{\alpha,U})^{\Z_2}((\hb_1))((\hb_2)),\star) \text{.}$$
 Further, Proposition 4.1 of ~\cite{HT} argues that $(\text{C}^{\infty}(N^*_{\alpha,U})^{\Z_2}((\hb_1))((\hb_2)),\star)$ is isomorphic as algebras to
 $(\Gamma(N^*_{\alpha,U},\Aa_{M^-}((\hb_1))((\hb_2)))^{\Z_2},\star_F)$. Note that one has a natural inclusion of algebras
 $$ (\Gamma(N^*_{\alpha,U},\Aa_{M^-}((\hb_1))((\hb_2)))^{\Z_2},\star_F) \hookrightarrow (\Gamma(N^*_{\alpha,U},\Aa_{M^-}((\hb_1))((\hb_2))),\star_F) \text{.}$$
Further observe that the collection of open subsets $\{U \subset M^- \text{ }|\text{ }U \text{ open} \}\cup \{ N_{\alpha,U} \text{ }|\text{ }U \subset M_2^{\gamma}\\ \text{ is open and } 0<\alpha \leq \epsilon\}$ forms a base for the topology on $M$. We denote this base by $\mathcal B_{\epsilon}$. Consider the assignment
\begin{eqnarray}
 \nonumber U \mapsto (\Gamma(U,\Aa_{M^-}((\hb_1))((\hb_2))),\star_F) \text{ for }U \subset M^- \text{ open}\\
\nonumber N_{\alpha,U} \mapsto (\text{C}^{\infty}(N_{\alpha,U})^{\Z_2}((\hb_1))((\hb_2)),\star) \text{ for } U \subset M_2^{\gamma}\\
 \la{a18} N_{\alpha,U} \mapsto (\text{C}^{\infty}(N_{\alpha,U})^{\Z_2}((\hb_1))((\hb_2)),\star_F) \text{ for } U \subset M^{\gamma} \setminus
 M_2^{\gamma}.
\end{eqnarray}
For an open subset $V$ of $M^-$ and an open subset $U$ of $M_2^{\gamma}$, the restriction map $\rho_{N_{\alpha,U},V}$ is defined via the isomorphism from Proposition 4.1 of ~\cite{HT} mentioned above. In all other cases, the restriction maps are natural.
\begin{prop} \la{p5} The assignment ~\eqref{a18} with restriction maps defined as above gives a $\mathcal B_{\epsilon}$-presheaf of algebras on $M$.
\end{prop}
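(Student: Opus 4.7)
The plan is to verify the two presheaf axioms — that each restriction map is an algebra homomorphism, and that restrictions compose correctly — by stratifying the basic opens of $\mathcal{B}_\epsilon$ into three types (opens $V \subset M^-$, tubular neighborhoods $N_{\alpha,U}$ with $U \subset M_2^\gamma$, and tubular neighborhoods $N_{\alpha,U}$ with $U \subset M^\gamma \setminus M_2^\gamma$), and handling each type of inclusion in turn.

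First I would dispose of the intra-type inclusions. For $V' \subset V$ with both contained in $M^-$, both sides are sections of the Fedosov sheaf $\Aa_{M^-}((\hb_1))((\hb_2))$ and the restriction is the structural one, so both axioms are inherited from that sheaf. For $N_{\alpha',U'} \subset N_{\alpha,U}$ with $U, U' \subset M^\gamma \setminus M_2^\gamma$, the same argument applies using the Fedosov quantization of $M \setminus M_2^\gamma$ with Weyl curvature $\omega$. For $N_{\alpha',U'} \subset N_{\alpha,U}$ with $U, U' \subset M_2^\gamma$, the product $\star$ is inherited from the sheaf of algebras $\Aa_D$ on $M_2^\gamma$ through the vector-space identification $\Gamma(U,\Aa_D) \cong \text{C}^\infty(N|_U)^{\Z_2}((\hb_1))((\hb_2))$; since $\star$ is $\gamma$-local on $N$, restriction from $N_{\alpha,U}$ to $N_{\alpha',U'}$ is well defined on $\Z_2$-invariants and is an algebra homomorphism because it is one on sections of $\Aa_D$.

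The substantive step is the mixed inclusion $V \subset N_{\alpha,U}$ with $V \subset M^-$ open and $U \subset M_2^\gamma$, in which case necessarily $V \subset N^*_{\alpha,U}$. The restriction $\rho_{N_{\alpha,U},V}$ is constructed as the composite of: (i) the $\gamma$-local restriction $(\text{C}^\infty(N_{\alpha,U})^{\Z_2}((\hb_1))((\hb_2)),\star) \to (\text{C}^\infty(N^*_{\alpha,U})^{\Z_2}((\hb_1))((\hb_2)),\star)$; (ii) the algebra isomorphism $\Theta$ of [HT, Proposition 4.1] to $(\Gamma(N^*_{\alpha,U}, \Aa_{M^-}((\hb_1))((\hb_2)))^{\Z_2}, \star_F)$; (iii) the inclusion of $\Z_2$-invariants into the full Fedosov sections; and (iv) the structural restriction to $V$. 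Each factor is an algebra homomorphism, hence so is $\rho_{N_{\alpha,U},V}$. The analogous (but easier) case $V \subset N_{\alpha,U}$ with $U \subset M^\gamma \setminus M_2^\gamma$ is immediate, since there the Dunkl-Weyl product is literally $\star_F$.

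Finally, compatibility of compositions in a chain $W_1 \subset W_2 \subset W_3$ is checked type by type. Whenever the three opens are all of the same type, it follows from the functoriality of the ambient sheaf ($\Aa_{M^-}$, $\Aa_D$, or the Fedosov sheaf off $M_2^\gamma$). In the mixed cases, the check reduces, after inserting $\Theta$, to the functoriality of restriction on sections of $\Aa_{M^-}((\hb_1))((\hb_2))$, together with the fact that $\Theta$ is itself defined sheaf-theoretically and therefore commutes with further restriction to smaller opens inside $N^*_{\alpha,U}$. These are routine diagram chases, since both the Fedosov restriction on $M^-$ and the $\gamma$-local restriction on $N$ are ultimately restriction of smooth sections. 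The only truly nontrivial ingredient is [HT, Proposition 4.1]; without it, the two a priori unrelated products $\star$ and $\star_F$ on $N^*_{\alpha,U}$ would not glue, and this is where I would expect the main content to lie, though it is quotable from the cited reference.
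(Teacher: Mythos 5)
Your decomposition of the basic opens into three types, your treatment of the intra-type inclusions, and your identification of the mixed case $V \subset N_{\alpha,U}$ (with $U \subset M_2^\gamma$) as the substantive step all match the paper's approach. However, there is a genuine gap at the crucial point: you assert that the Halbout--Tang isomorphism $\Theta$ of \cite{HT}, Proposition 4.1 ``is itself defined sheaf-theoretically and therefore commutes with further restriction to smaller opens inside $N^*_{\alpha,U}$.'' This is not quotable from \cite{HT}. That proposition produces an isomorphism over a \emph{fixed} punctured tubular neighborhood; it does not by itself assert any compatibility between the isomorphism over $N^*_{\alpha,U}$ and the one over a smaller $N^*_{\beta,V}$, and establishing this is in fact the entire content of the paper's proof.

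The paper proves the commutativity of the middle square of its restriction diagram by unwinding the two-step construction of $\Theta$. First, a $U(1)$-equivariant isomorphism between $\dk((\hb_1))((\hb_2))|_{D^*_\epsilon}$ and $\W_2^{\Z_2}((\hb_1))((\hb_2))|_{D^*_\epsilon}$ (Remark 4.3 of \cite{HT}), compatible with restriction to smaller punctured discs, yields an isomorphism from $(\text{C}^\infty(N^*_{\alpha,U})^{\Z_2}((\hb_1))((\hb_2)),\star)$ to an intermediate quantization $(\text{C}^\infty(N^*_{\alpha,U})^{\Z_2}((\hb_1))((\hb_2)),\star_W)$ built from $\V_W = P \times_{U(1)} \W_2^{\Z_2}((\hb_1))((\hb_2))$, and this isomorphism is compatible with restriction. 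Second, Kravchenko's Theorem 5.6 (\cite{K}) identifies $\star_W$ with $\Gamma(N^*_{\alpha,U},\Aa_{M^-}((\hb_1))((\hb_2)))^{\Z_2}$ because they are quantizations with the same characteristic class --- and one must then observe, as the paper does, that Kravchenko's isomorphism is also compatible with restriction because it comes from a local Fedosov-type construction. Your proof treats this as a routine diagram chase; the paper shows it is the one point where a real argument is required. Without the two-step unwinding your proposal leaves the key verification unaddressed.
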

\begin{proof}
The only nontrivial verification we need to do is to verify compatibility of restriction maps for a sequence of open subsets of $M$ of the form
$W \subset N_{\beta,V} \subset N_{\alpha,U}$ where $V \subset U$ are open subsets of $M_2^{\gamma}$, $\beta \leq \alpha$ and $W$ is an open subset of $M^-$. The required verification follows from the commutativity of the following diagram where the horizontal arrows are the natural restriction maps.
$$\begin{CD}
\text{C}^{\infty}(N_{\alpha,U})^{\Z_2}((\hb_1))((\hb_2)) @>>> \text{C}^{\infty}(N_{\beta,V})^{\Z_2}((\hb_1))((\hb_2))\\
  @VVV                                                                @VVV\\
\text{C}^{\infty}(N^*_{\alpha,U})^{\Z_2}((\hb_1))((\hb_2)) @>>> \text{C}^{\infty}(N^*_{\beta,V})^{\Z_2}((\hb_1))((\hb_2))\\
  @VVV                                                               @VVV\\
  \Gamma(N^*_{\alpha,U},\Aa_{M^-}((\hb_1))((\hb_2)))^{\Z_2} @>>> \Gamma(N^*_{\beta,V},\Aa_{M^-}((\hb_1))((\hb_2)))^{\Z_2}\\
  @VVV                      @VVV\\
   \Gamma(N^*_{\alpha,U},\Aa_{M^-}((\hb_1))((\hb_2))) @>>>  \Gamma(N^*_{\beta,V},\Aa_{M^-}((\hb_1))((\hb_2)))
\end{CD}$$
The vertical arrows in the topmost square of the above diagram are
the natural restriction maps. Those in the middle square are the
isomorphisms from Proposition 4.1 of ~\cite{HT}. Those in the bottom
square are natural inclusions. The bottom square obviously commutes.
The commutativity of the top square follows from $\gamma$-locality
of the product on $\dk((\hb_1))((\hb_2))$. To understand the middle
square, we recall the construction of the isomorphism from
Proposition 4.1 of ~\cite{HT}. This isomorphism is constructed in
two steps. To start with, as pointed out in Remark 4.3 of
~\cite{HT}, one has a $U(1)$-equivariant isomorphism between
$\dk((\hb_1))((\hb_2))|_{D^*_{\epsilon}}$ and
$\W_2^{\Z_2}((\hb_1))((\hb_2))|_{D^*_{\epsilon}}$ compatible with
restriction to $D^*_{\epsilon'}$ for any $\epsilon' < \epsilon$.
This enables the construction of an isomorphism of algebras between
$(\text{C}^{\infty}(N^*_{\alpha,U})^{\Z_2}((\hb_1))((\hb_2)),\star)$
and
$(\text{C}^{\infty}(N^*_{\alpha,U})^{\Z_2}((\hb_1))((\hb_2)),\star_W)$
where the latter algebra is constructed following Section 3.1.1
after replacing $\V$ by $\V_W:=P \times_{U(1)}
\W_2^{\Z_2}((\hb_1))((\hb_2))$. Since the $U(1)$-equivariant
isomorphism between $\dk((\hb_1))((\hb_2))|_{D^*_{\epsilon}}$ and
$\W_2^{\Z_2}((\hb_1))((\hb_2))|_{D^*_{\epsilon}}$ compatible with
restriction to $D^*_{\epsilon'}$ for any $\epsilon' < \epsilon$, the
isomorphism of algebras between
$(\text{C}^{\infty}(N^*_{\alpha,U})^{\Z_2}((\hb_1))((\hb_2)),\star)$
and
$(\text{C}^{\infty}(N^*_{\alpha,U})^{\Z_2}((\hb_1))((\hb_2)),\star_W)$
is compatible with restriction to $N^*_{\beta,V}$ for $V \subset U$
and $\beta \leq \alpha$. One then appeals to Theorem 5.6 of
~\cite{K} to note that since
$(\text{C}^{\infty}(N^*_{\alpha,U})^{\Z_2}((\hb_1))((\hb_2)),\star_W)$
and $\Gamma(N^*_{\alpha,U},\Aa_{M^-}((\hb_1))((\hb_2)))^{\Z_2}$ are
quantizations of $\text{C}^{\infty}(N^*_{\alpha,U})$ having the same
characteristic class, they are isomorphic. The desired proposition
therefore follows once we see that the isomorphism from Theorem 5.6
of ~\cite{K} is compatible with restriction to $N^*_{\beta,V}$. This
follows from the fact that Kravchenko's construction \cite{K} is a
local construction using a Fedosov type connection.
\end{proof}
Denote the $\mathcal B_{\epsilon}$-presheaf of algebras on $M$ coming from Proposition ~\ref{p5} by $\mathcal F$. One constructs a presheaf of algebras $\Aad((\hb_1))((\hb_2))$ on $M$ by setting
$$\Gamma(U,\Aad((\hb_1))((\hb_2))):= \varprojlim_{\stackrel{V \subset U}{V \in \mathcal B_{\epsilon}}} \Gamma(V,\mathcal F) \text{.}$$
Clearly,
\begin{equation} \la{a19} \Gamma(V, \Aad((\hb_1))((\hb_2))) \cong \Gamma(V,\mathcal F) \text{ for } V \in \mathcal B_{\epsilon}\text{.}\end{equation}
\begin{prop} \la{p6}
$$ \mathfrak{A}_{M/\Z_2}((\hb_1))((\hb_2)) \cong \Gamma(M,\Aad((\hb_1))((\hb_2)))^{\Z_2} \text{.}$$
\end{prop}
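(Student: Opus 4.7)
The strategy is to reduce both sides to the same limit over the base $\mathcal{B}_{\epsilon}$ and then match them with the original Halbout--Tang gluing construction of $\mathfrak{A}_{M/\Z_2}((\hb_1))((\hb_2))$. First I would unwind the definition of $\Aad((\hb_1))((\hb_2))$: by construction and by \eqref{a19}, for any open $W \subset M$ we have
\[
\Gamma(W,\Aad((\hb_1))((\hb_2))) \;\cong\; \varprojlim_{\substack{V \subset W \\ V \in \mathcal{B}_{\epsilon}}} \Gamma(V,\mathcal{F}),
\]
so a global section of $\Aad((\hb_1))((\hb_2))$ is exactly a compatible family $\{f_V\}_{V \in \mathcal{B}_{\epsilon}}$ with respect to the restriction maps defined in Proposition \ref{p5}. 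Taking $\Z_2$-invariants on the right commutes with this inverse limit (the $\Z_2$-action preserves each member of $\mathcal{B}_{\epsilon}$ after replacing $U \subset M_2^{\gamma}$ and $V \subset M^-$ by their $\Z_2$-saturations, which again lie in $\mathcal{B}_{\epsilon}$ because $\dk((\hb_1))((\hb_2))$ is $\gamma$-local and $M^-$ carries a free $\Z_2$-action).

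Next I would recall that Halbout--Tang's construction of $\mathfrak{A}_{M/\Z_2}((\hb_1))((\hb_2))$ in Section 4 of \cite{HT} is obtained by covering $M$ with pieces of precisely three types: (i) Fedosov quantization on open sets of $M^-$, (ii) Dunkl--Weyl quantization on tubular neighborhoods $N_{\alpha,U}$ of open subsets $U \subset M_2^{\gamma}$, and (iii) ordinary Fedosov quantization on tubular neighborhoods of components of $M^{\gamma} \setminus M_2^{\gamma}$, and then gluing these via the comparison isomorphism of Proposition 4.1 of \cite{HT} on the punctured overlaps $N^*_{\alpha,U}$. But these are exactly the three clauses of the assignment \eqref{a18} defining $\mathcal{F}$, and the gluing isomorphism used by Halbout--Tang is exactly the one built into the restriction map $\rho_{N_{\alpha,U},V}$ in Proposition \ref{p5}. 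Hence a global $\Z_2$-invariant Halbout--Tang section is nothing other than a $\Z_2$-invariant compatible family on $\mathcal{B}_{\epsilon}$, yielding a natural $\c((\hb_1))((\hb_2))$-linear map
\[
\Phi:\, \mathfrak{A}_{M/\Z_2}((\hb_1))((\hb_2)) \;\longrightarrow\; \Gamma(M,\Aad((\hb_1))((\hb_2)))^{\Z_2}.
\]

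To see that $\Phi$ is an algebra isomorphism, injectivity is immediate since $\mathcal{B}_{\epsilon}$ is a base of the topology and the underlying assignment on each member of $\mathcal{B}_{\epsilon}$ records all local data of a Halbout--Tang section. For surjectivity I would take a compatible family $\{f_V\}$ on $\mathcal{B}_{\epsilon}$, observe that its restriction to the subcover by open sets in $M^-$ and by open sets inside $M^{\gamma}$-tubes already forms a sheaf-theoretic gluing datum in the sense of Halbout--Tang, and reassemble it into a global section; the compatibility on punctured overlaps $N^*_{\alpha,U}$ required by Halbout--Tang is precisely the compatibility encoded by the middle and bottom rows of the commutative diagram in the proof of Proposition \ref{p5}. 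That $\Phi$ is multiplicative follows from the fact that on each $V \in \mathcal{B}_{\epsilon}$ both products are, by definition, the ones used in clause (i), (ii), or (iii) of \eqref{a18}, and these are the same products that define multiplication on $\mathfrak{A}_{M/\Z_2}((\hb_1))((\hb_2))$.

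The main obstacle is the bookkeeping at the overlaps between tubular neighborhoods of $M_2^{\gamma}$ and open sets in $M^-$: one has to verify that the comparison isomorphism from Proposition 4.1 of \cite{HT}, which is the nontrivial ingredient of the gluing, matches the identifications used in building $\mathcal{F}$, and that it is compatible with shrinking $\alpha$ and $U$. This compatibility is exactly what is proved in the commutative diagram inside the proof of Proposition \ref{p5}, using Kravchenko's Theorem 5.6 of \cite{K} and the $U(1)$-equivariance noted in Remark 4.3 of \cite{HT}. Once that is in hand, the identification $\Phi$ is forced by the universal property of the inverse limit.
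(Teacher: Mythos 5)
Your proposal is correct and follows essentially the same approach as the paper: both unwind global sections of $\Aad((\hb_1))((\hb_2))$ into compatible local data on $\mathcal{B}_{\epsilon}$, match the three clauses of~\eqref{a18} with Halbout--Tang's three local models, and identify the gluing on punctured tubular neighborhoods via Proposition 4.1 of~\cite{HT}. The paper's proof is terser---it directly exhibits a global $\Z_2$-invariant section as a pair $(\alpha,\beta)$ on the two-element cover $\{M^-, N_{M_2^{\gamma},\epsilon}\}$ and invokes~\eqref{a19}---but the underlying argument is the one you spell out.
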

\begin{proof}
An element of $\Gamma(M,\Aad((\hb_1))((\hb_2)))^{\Z_2}$ is equivalent to a pair $(\alpha, \beta)$ such that \\$\alpha \in \Gamma(M^-,\Aad((\hb_1))((\hb_2)))^{\Z_2}$, $\beta \in \Gamma(N_{M_2^{\gamma},\epsilon},\Aad((\hb_1))((\hb_2)))$ and $\beta|_{N^*_{M_2^{\gamma},\epsilon}}$ coincides with $\alpha|_{N^*_{M_2^{\gamma},\epsilon}}$ under the isomorphism of Proposition 4.1 of ~\cite{HT}. By ~\eqref{a19}, this is precisely how elements of $\mathfrak{A}_{M/\Z_2}((\hb_1))((\hb_2))$ are constructed in ~\cite{HT}. That the isomorphism of vector spaces so obtained is an isomorphism of algebras is also immediate from the construction of $\mathfrak{A}_{M/\Z_2}((\hb_1))((\hb_2))$ from ~\cite{HT}.
\end{proof}

\subsubsection{Some important recollections and observations.} We recall that the algebra $\text{C}^{\infty}(M)$ comes equipped with a canonical bornology (see Appendix A6 of ~\cite{PPTT}). As a result, the algebra $\text{C}^{\infty}(M)^{\Z_2}$ is also equipped with the subspace bornology (which we will still call canonical). This induces bornologies on $\A_{M/\Z_2}((\hb_1))$ and $\mathfrak{A}_{M/\Z_2}((\hb_1))((\hb_2))$. Similarly, sections of the presheaf $\Aad((\hb_1))((\hb_2))$ (and other similarly constructed presheaves described in the following paragraphs) over each open $V \subset M$ are equipped with the canonical bornology. The isomorphism in Prop~\ref{p6} is an isomorphism of bornological $\c((\hb_1))((\hb_2))$-modules.\\

Further, note that for $U \subset M_2^{\gamma}$, $U$ open, the product $\star$ constructed in Section 3.1.1 on $\text{C}^{\infty}(N_{\alpha,U})^{\Z_2}((\hb_1))((\hb_2))$ restricts to a product on
$\text{C}^{\infty}(N_{\alpha,U})^{\Z_2}((\hb_1))[[\hb_2]]$. This is because the star product on $\text{C}^{\infty}(\R^2)^{\Z_2}((\hb_1))((\hb_2))$ coming from $\dk((\hb_1))((\hb_2))$ restricts to a product on $\text{C}^{\infty}(\R^2)^{\Z_2}((\hb_1))[[\hb_2]]$. One also notes that the restriction map from $\text{C}^{\infty}(N_{\alpha,U})^{\Z_2}((\hb_1))((\hb_2))$ to $\Gamma(N^*_{\alpha,U},\Aa_{M^-}((\hb_1))((\hb_2)))$ maps $\text{C}^{\infty}(N_{\alpha,U})^{\Z_2}((\hb_1))[[\hb_2]]$ to\\ $\Gamma(N^*_{\alpha,U},\Aa_{M^-}((\hb_1))[[\hb_2]])$. This is immediate once we note that the isomorphism between $\dk((\hb_1))((\hb_2))|_{D^*_{\epsilon}}$ and $\W_2^{\Z_2}((\hb_1))((\hb_2))|_{D^*_{\epsilon}}$ used to construct the above restriction map in Proposition 4.1 of ~\cite{HT} is of the form $\text{id}+\hb_2\nu_1+\text{ higher order terms in }\hb_2$.\\

One may therefore follow Section 3.1 (Steps 3.1.1-3.1.3) to
construct a sub-presheaf $\Aad((\hb_1))[[\hb_2]]$ of the presheaf of
algebras $\Aad((\hb_1))((\hb_2))$ constructed in Sections 3.1.1-3.1.3.
We denote $\Gamma(M,\Aad((\hb_1))[[\hb_2]])^{\Z_2}$ by $\mathfrak{A}_{M/\Z_2}((\hb_1))[[\hb_2]]$.
We may view $\mathfrak{A}_{M/\Z_2}((\hb_1))[[\hb_2]]$ as a filtered
algebra with $F_{-p}\mathfrak{A}_{M/\Z_2}:= \hb_2^p\mathfrak{A}_{M/\Z_2}((\hb_1))[[\hb_2]]$
for $p \geq 0$. The associated graded algebra with respect
to this filtration is clearly $\A_{M/\Z_2}((\hb_1))[[\hb_2]]$.
Similarly, the presheaf $\Aad((\hb_1))[[\hb_2]]$ may be viewed
as a filtered presheaf of algebras with $F_{-p}\Aad((\hb_1))[[\hb_2]]:=\hb_2^p\Aad((\hb_1))[[\hb_2]]$ for $p \geq 0$.
The associated graded presheaf of algebras with respect to this filtration is simply
the presheaf $\Aa((\hb_1))[[\hb_2]]$ constructed following Sections 3.1.1-3.1.3 with $\dk((\hb_1))((\hb_2))$ replaced by $\W_2^{\Z_2}((\hb_1))[[\hb_2]]$. One notes that $\Gamma(M,\Aa((\hb_1))[[\hb_2]])^{\Z_2} \cong \A_{M/\Z_2}((\hb_1))[[\hb_2]]$.\\

\subsection{Computation of Hochschild homology.}

For the rest of the paper, any Hochschild chain complex of an algebra is a Hochschild chain complex of a bornological algebra. The algebras we are working with are all equipped with canonical bornologies (which coincide with their precompact as well as with their von Neumann bornologies: see Appendix A of ~\cite{PPTT} for definitions). Recall that if $A$ is a bornological algebra, the (bornological) space of Hochschild $k$-chains of $A$ is the $k+1$-st bornological tensor power of $A$. The reader may refer to Appendix A of ~\cite{PPTT} for an excellent introduction to the homological algebra of bornological algebras and modules.
Clearly, the assignment
$$U \mapsto \text{C}_{\bullet}(\Gamma(U,\Aad((\hb_1))((\hb_2))))$$
gives a complex of presheaves of $\c((\hb_1))((\hb_2))$-modules on $M$. We denote this complex of presheaves by $\text{C}_{\bullet}(\Aad((\hb_1))((\hb_2)))$.
\begin{prop} \la{p7}
The natural map
$$\hh_{\bullet}(\mathfrak{A}_{M/\Z_2}((\hb_1))((\hb_2))) \rar {\mathbb H}^{-\bullet}(M,\text{C}_{\bullet}(\Aad((\hb_1))((\hb_2)))^{\Z_2}$$
is an isomorphism of $\c((\hb_1))((\hb_2))$-modules.
\end{prop}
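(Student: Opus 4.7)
The plan is to combine the $\hb_2$-adic filtration argument of Proposition~\ref{p1} with the known descent theorem for Hochschild chains of Fedosov-type orbifold deformation quantizations.

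First, I would reduce from Laurent series to formal power series in $\hb_2$. Since
\[
\text{C}_{\bullet}(\mathfrak{A}_{M/\Z_2}((\hb_1))((\hb_2))) = \varinjlim_{p \geq 0} \hb_2^{-p}\text{C}_{\bullet}(\mathfrak{A}_{M/\Z_2}((\hb_1))[[\hb_2]])
\]
and the analogous equality holds on the target (hypercohomology on the compact manifold $M$ commuting with filtered colimits in the coefficient presheaf within our bornological setup), it is enough to prove that the natural map
\[
\hh_{\bullet}(\mathfrak{A}_{M/\Z_2}((\hb_1))[[\hb_2]]) \rar {\mathbb H}^{-\bullet}(M, \text{C}_{\bullet}(\Aad((\hb_1))[[\hb_2]])^{\Z_2})
\]
is an isomorphism.

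Second, both sides carry the complete, exhaustive, bounded-above $\hb_2$-adic filtration $F_{-p} = \hb_2^{p}(-)$ introduced in Section 3.1.3 and already exploited in the proof of Proposition~\ref{p1}. The complete convergence theorem then furnishes strongly convergent spectral sequences on source and target that are compatible with the natural map, and the associated graded of $\Aad((\hb_1))[[\hb_2]]$ is the presheaf $\Aa((\hb_1))[[\hb_2]]$ whose $\Z_2$-invariant global sections recover $\A_{M/\Z_2}((\hb_1))[[\hb_2]]$. Hence it suffices to prove the corresponding isomorphism on the $E^1$-page; because passage from $\A_{M/\Z_2}((\hb_1))$ to $\A_{M/\Z_2}((\hb_1))[[\hb_2]]$ merely extends scalars by the flat $\c((\hb_1))$-algebra $\c((\hb_1))[[\hb_2]]$ (and similarly for the presheaf of chains), this reduces further to the statement
\[
\hh_{\bullet}(\A_{M/\Z_2}((\hb_1))) \;\xrightarrow{\;\cong\;}\; {\mathbb H}^{-\bullet}(M, \text{C}_{\bullet}(\Aa((\hb_1)))^{\Z_2})
\]
for the orbifold deformation quantization with characteristic class $\omega$.

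The last display is the standard local-to-global statement for Hochschild chains of a Fedosov-type quantization of the symplectic orbifold $M/\Z_2$, established in \cite{PPTT} (see also \cite{NPPT}, \cite{PPT}). Its proof proceeds by \v{C}ech descent on a Darboux-type good cover subordinate to the base $\mathcal{B}_{\epsilon}$, using that Hochschild chains of Fedosov quantizations satisfy Mayer--Vietoris, together with partitions of unity. Since $|\Z_2|$ is invertible in $\c$, the $\Z_2$-invariants functor is exact and commutes with the chain, sheafification, and hypercohomology constructions, so the statement survives passage to invariants.

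The main anticipated obstacle is verifying the Mayer--Vietoris property for $\text{C}_{\bullet}(\Aad((\hb_1))((\hb_2)))$ \emph{on neighborhoods} $N_{\alpha,U}$ \emph{of the codimension-two fixed locus} $M_2^{\gamma}$, where the local fiber is the Dunkl--Weyl algebra $\dk((\hb_1))((\hb_2))$ rather than an ordinary Weyl algebra. The key ingredients are the $\gamma$-locality of the Dunkl--Weyl star product invoked repeatedly in Section 3.1, together with the local Hochschild homology computation of Proposition~\ref{p2}, which guarantees that the chain complex has the expected cohomology sheaf-locally near $M_2^{\gamma}$. The detailed verification of these properties is precisely the content of the technical propositions that the authors defer to Section 4, where trace-density techniques become available.
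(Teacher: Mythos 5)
Your first two reductions are exactly the ones the paper carries out: passing from Laurent to formal power series in $\hb_2$ because both the Hochschild chain complex and the hypercohomology commute with the filtered colimit $\varinjlim \hb_2^{-p}(-)$, and then filtering by powers of $\hb_2$ and invoking the complete convergence theorem to reduce to the $E^1$-comparison, which replaces $\Aad$ by the associated graded presheaf $\Aa$ and $\mathfrak{A}_{M/\Z_2}$ by $\A_{M/\Z_2}$. Up to this point you match Propositions~\ref{p7} and (the first part of) \ref{p8}.

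Where the proposal falters is the final step and the concluding paragraph. First, the statement $\hh_{\bullet}(\A_{M/\Z_2}((\hb_1))) \cong \mathbb{H}^{-\bullet}(M, \text{C}_{\bullet}(\Aa((\hb_1)))^{\Z_2})$ is not a direct citation from \cite{PPTT}: the presheaf $\Aa$ here is the specific (non-sheaf) $\mathcal B_{\epsilon}$-presheaf built from the quantum Darboux gluing, and the paper proves the comparison by running a second spectral sequence in $\hb_1$ to reduce to the classical algebra $\text{C}^{\infty}_{M/\Z_2}$, then identifying the hypercohomology (using that $\text{C}_{\bullet}(\mathcal C^{\infty}_{M/\Z_2})$ is a complex of fine presheaves) with the homology of its global sections, and finally invoking Teleman's microlocalization \cite{Te}. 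Your description of the mechanism as ``\v{C}ech descent using that Hochschild chains of Fedosov quantizations satisfy Mayer--Vietoris'' is not accurate: Hochschild chains of $\text{C}^{\infty}$-type algebras do not form a sheaf and do not satisfy naive Mayer--Vietoris; the nontrivial ingredient is precisely Teleman's localization theorem, which lets one replace the global chain complex by the complex of germs along the diagonal.

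Second, your closing paragraph is confused about what remains to be checked. After the $\hb_2$-spectral sequence reduction the Dunkl--Weyl fiber is replaced by the ordinary (invariant) Weyl algebra, so the $\gamma$-locality of the Dunkl product and Proposition~\ref{p2} play no role in proving Proposition~\ref{p7}/\ref{p8}; those ingredients are used in Proposition~\ref{qism}, which is the computation of the sheaves of local Hochschild homology, not the local-to-global comparison. Correspondingly, the technical deferral to Section~4 concerns Proposition~\ref{qism} and Proposition~\ref{vdbf}, not Proposition~\ref{p7}. So the ``anticipated obstacle'' you flag is not an obstacle for this statement, and the argument you outline leaves out the actual nontrivial step (the further $\hb_1$-reduction and Teleman localization) needed to finish.
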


\begin{proof}
It is clear that
$$\text{C}_{\bullet}(\mathfrak{A}_{M/\Z_2}((\hb_1))((\hb_2))) = \varinjlim_{p \geq 0} \hb_2^{-p}\text{C}_{\bullet}((\mathfrak{A}_{M/\Z_2}((\hb_1))[[\hb_2]],\star)) \text{.}$$
Further, as complexes of presheaves on $M$,
$$ \text{C}_{\bullet}(\Aad((\hb_1))((\hb_2))) \cong \varinjlim_{p \geq 0} \hb_2^{-p}\text{C}_{\bullet}((\Aad((\hb_1))[[\hb_2]],\star)) \text{.}$$
Further, the above isomorphism of complexes of presheaves is $\Z_2$-equivariant. Since cohomology commutes with direct limits, the desired proposition follows from Proposition ~\ref{p8}, which follows.
\end{proof}

\begin{prop} \la{p8}
The natural map
$$\hh_{\bullet}((\mathfrak{A}_{M/\Z_2}((\hb_1))[[\hb_2]],\star)) \rar {\mathbb H}^{-\bullet}(M,\text{C}_{\bullet}((\Aad((\hb_1))[[\hb_2]],\star)))^{\Z_2}$$
is an isomorphism of $\c((\hb_1))[[\hb_2]]$-modules.
\end{prop}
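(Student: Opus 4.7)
The plan is to prove Proposition~\ref{p8} by reducing, via the $\hb_2$-adic filtration on both sides, to the analogous statement for the first-stage Fedosov quantization $\A_{M/\Z_2}((\hb_1))[[\hb_2]]$, for which the local-to-global principle is already known from the work of Pflaum--Posthuma--Tang.

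First I would equip both sides of the map with the decreasing $\hb_2$-adic filtrations
\[
F_{-p}\text{C}_{\bullet}((\mathfrak{A}_{M/\Z_2}((\hb_1))[[\hb_2]],\star)) := \hb_2^p\,\text{C}_{\bullet}((\mathfrak{A}_{M/\Z_2}((\hb_1))[[\hb_2]],\star)),
\]
and analogously on $\text{C}_{\bullet}((\Aad((\hb_1))[[\hb_2]],\star))$. These filtrations are complete and exhaustive (completeness follows, as in the proof of Proposition~\ref{p1}, from $\c((\hb_1))[[\hb_2]]=\varprojlim_k \c((\hb_1))[[\hb_2]]/\hb_2^k\c((\hb_1))[[\hb_2]]$ and from flatness of the bornological tensor powers of $\mathfrak{A}_{M/\Z_2}((\hb_1))[[\hb_2]]$ over $\c((\hb_1))[[\hb_2]]$). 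Because the natural map from Hochschild chains of global sections to hypercohomology is $\c((\hb_1))[[\hb_2]]$-linear and manifestly filtration-preserving, it induces a morphism of convergent spectral sequences, and it suffices to prove the claim at the $E^1$-level.

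Next I would identify the associated graded objects. By the discussion at the end of Section 3.1.4, $\gr(\mathfrak{A}_{M/\Z_2}((\hb_1))[[\hb_2]],\star) \cong \A_{M/\Z_2}((\hb_1))[[\hb_2]]$ as an algebra, and likewise $\gr(\Aad((\hb_1))[[\hb_2]],\star) \cong \Aa((\hb_1))[[\hb_2]]$ as a presheaf of algebras, where $\Aa((\hb_1))[[\hb_2]]$ is built exactly like $\Aad((\hb_1))((\hb_2))$ but with $\dk((\hb_1))((\hb_2))$ replaced by $\W_2^{\Z_2}((\hb_1))[[\hb_2]]$. Thus the $E^1$-comparison reduces to showing that the natural map
\[
\hh_{\bullet}(\A_{M/\Z_2}((\hb_1))[[\hb_2]]) \longrightarrow {\mathbb H}^{-\bullet}(M,\text{C}_{\bullet}(\Aa((\hb_1))[[\hb_2]]))^{\Z_2}
\]
is an isomorphism of $\c((\hb_1))[[\hb_2]]$-modules. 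Since the parameter $\hb_2$ enters trivially (the presheaf $\Aa((\hb_1))[[\hb_2]]$ is just $\Aa((\hb_1))[[\hb_2]]=\Aa((\hb_1))\hat{\otimes}_{\c((\hb_1))}\c((\hb_1))[[\hb_2]]$ as a bornological sheaf), a further $\hb_2$-adic filtration argument, identical in spirit to Proposition~\ref{p1}, reduces the statement to the case of $\A_{M/\Z_2}((\hb_1))$ itself.

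Finally I would invoke the local-to-global principle for the orbifold Fedosov quantization $\A_{M/\Z_2}((\hb_1))$ established in \cite{PPT}, \cite{PPTT}: the sheafified Hochschild chain complex is quasi-isomorphic to a local acyclic resolution on each coordinate chart (computable via Brylinski's HKR-type arguments on twisted sectors, using the invariance of the $\Z_2$-action), so that sheafification is compatible with taking Hochschild homology of global sections. Combining this with the two filtration arguments and applying the complete convergence theorem yields the proposition. The main obstacle is the careful bookkeeping needed to ensure that the bornological Hochschild complex of sections behaves well under the gluing construction of Section 3.1.3 -- in particular, that restriction to the presheaf $\mathcal{B}_{\epsilon}$ does not disturb the spectral sequence convergence -- and this is where I would need to lean most heavily on the bornological formalism of Appendix A of \cite{PPTT} together with the $\gamma$-locality of the star product on $\dk((\hb_1))((\hb_2))$ guaranteed by Theorem 3.10 of \cite{HT}.
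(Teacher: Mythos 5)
Your proof follows the paper's architecture closely up to the $E^1$-level comparison: both use the $\hb_2$-adic filtration, the fact that the natural map is filtration-preserving, the identification of the associated graded with the presheaf $\Aa((\hb_1))[[\hb_2]]$ built from $\W_2^{\Z_2}$, and the complete convergence theorem. Two small organizational differences arise at the $E^1$ level: the paper's $E^1$ terms are directly $\hb_2^{-p}\hh_{p+q}(\A_{M/\Z_2}((\hb_1)))$ (so the extra "$\hb_2$ enters trivially" pass you insert is redundant — at the graded level $[[\hb_2]]$ has already dropped out), and the paper phrases the hypercohomology concretely via the invariant part of a \v{C}ech totalization over a $\Z_2$-invariant finite cover $\mathfrak{V}$, which is what makes the filtration on the right-hand side well behaved. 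The real divergence is in the endgame. You stop at the Fedosov-quantization level and cite a local-to-global principle from \cite{PPT}/\cite{PPTT}; the paper instead performs a \emph{second} reduction (of the same flavor, analogous to the passage from Proposition~\ref{p7} to Proposition~\ref{p8}) that collapses $\A_{M/\Z_2}((\hb_1))$ down to the classical algebra $\text{C}^{\infty}(M/\Z_2)$, rewrites the \v{C}ech side over a cover $\mathfrak{W}$ of $M/\Z_2$ directly, and then invokes Teleman's microlocalisation argument from \cite{Te}. Your appeal to \cite{PPT}/\cite{PPTT} is plausible but is stated too loosely to be a complete replacement: those papers work with crossed-product algebras $\A \rtimes \Z_2$ and with convolution algebras of proper \'etale groupoids, not with the specific $\mathcal{B}_\epsilon$-presheaf of $\Z_2$-invariants assembled in Section 3.1, so one would still need to check the translation — which is essentially what the paper's extra reduction to $\text{C}^{\infty}(M/\Z_2)$ plus Teleman accomplishes in a self-contained way. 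The paper's route has the advantage of landing on a result (Teleman's) that is elementary and whose applicability to the orbifold $M/\Z_2$ is easy to verify; your route, if fleshed out, would be slightly shorter but imports heavier machinery whose hypotheses would need to be matched carefully.
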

\begin{proof}
Pick a $\Z_2$-invariant finite cover $\mathfrak{V}$ of $M$ by sufficiently small open sets. Then,\\ ${\mathbb H}^{-\bullet}(M,\text{C}_{\bullet}((\Aad((\hb_1))[[\hb_2]],\star)))^{\Z_2}$ is the cohomology of the complex of $\Z_2$-invariant cochains in
$$ \text{Tot}^{\oplus}(\hat{C}_{\mathfrak{V}}(\text{C}_{\bullet}((\Aad((\hb_1))[[\hb_2]],\star)))) \text{.}$$
Note that $\text{C}_{\bullet}((\mathfrak{A}_{M/\Z_2}((\hb_1))[[\hb_2]],\star))$ is a filtered complex with $$F_{-p}\text{C}_{\bullet}((\mathfrak{A}_{M/\Z_2}((\hb_1))[[\hb_2]],\star)):= \hb_2^p\text{C}_{\bullet}((\mathfrak{A}_{M/\Z_2}((\hb_1))[[\hb_2]],\star))$$ for $p \geq 0$. Similarly, $\text{C}_{\bullet}((\Aad((\hb_1))[[\hb_2]],\star))$ is a filtered complex with
$$F_{-p}\text{C}_{\bullet}((\Aad((\hb_1))[[\hb_2]],\star)):=\hb_2^p\text{C}_{\bullet}((\Aad((\hb_1))[[\hb_2]],\star))$$ for $p \geq 0$.
The natural map
$$\text{C}_{\bullet}((\mathfrak{A}_{M/\Z_2}((\hb_1))[[\hb_2]],\star)) \rar \text{Tot}^{\oplus}(\hat{C}_{\mathfrak{V}}(\text{C}_{\bullet}((\Aad((\hb_1))[[\hb_2]],\star))))^{\Z_2}$$ is compatible with filtrations, the filtration on the right being induced by the filtration on $\text{C}_{\bullet}((\Aad((\hb_1))[[\hb_2]],\star))$. The $E^1_{pq}$-terms of the corresponding spectral sequences are
$\hb_2^{-p}\hh_{p+q}(\A_{M/\Z_2}((\hb_1)))$ and $\hb_2^{-p}\text{H}_{p+q}(\text{Tot}^{\oplus}(\hat{C}_{\mathfrak{V}}(\text{C}_{\bullet}(\Aa((\hb_1)))))^{\Z_2})$ respectively. The spectral sequences on both sides converge by the complete convergence theorem. It therefore, remains to verify that the induced map on $E^1_{pq}$ terms described above is an isomorphism. Let $\mathcal C_p$ denote the $\mathcal B_{\epsilon}$-presheaf of algebras such that $\mathcal C_p(V)=\text{C}^{\infty}(V)$ and $\mathcal C_p(N_{\alpha,U})=\text{C}^{\infty}(N_{\alpha,U})^{\Z_2}$. As in the step immediately after Proposition~\ref{p5}, we construct a presheaf of algebras out of $\mathcal C_p$, which we shall continue to denote by $\mathcal C$. An argument essentially identical to that used to reduce Proposition~\ref{p7} to the above verification can be further used to reduce the above verification to a verification that the natural map between $\hh_{\bullet}(\text{C}^{\infty}({M/\Z_2}))$ and
$\text{H}_{\bullet}(\text{Tot}^{\oplus}(\hat{C}_{\mathfrak{V}}(\text{C}_{\bullet}(\mathcal C)))^{\Z_2})$ is an isomorphism. Let $\pi:M \rar M/\Z_2$ denote the natural projection. Note that the cover $\mathfrak{V}:=\{V_{\alpha}\}$ of $M$ is such that the open subsets $\pi(V_{\alpha})$ form an open cover $\mathfrak{W}$ of $M/\Z_2$ (of course, if $\pi(V_{\alpha})=\pi(V_{\beta})$ for some $\alpha \neq \beta$, we count $\pi(V_{\alpha})$ only once in $\mathfrak{W}$). Note that the homology $\text{H}_{\bullet}(\text{Tot}^{\oplus}(\hat{C}_{\mathfrak{V}}(\text{C}_{\bullet}(\mathcal C)))^{\Z_2})$ coincides with $\text{H}_{\bullet}(\text{Tot}^{\oplus}(\hat{C}_{\mathfrak{W}}(\text{C}_{\bullet}(\mathcal C^{\infty}_{M/\Z_2}) )))$. The latter is precisely the hypercohomology ${\mathbb H}^{-\bullet}(M/\Z_2,\text{C}_{\bullet}(\mathcal C^{\infty}_{M/\Z_2}))$. Since $\text{C}_{\bullet}(\mathcal C^{\infty}_{M/\Z_2})$ is a complex of fine presheaves on $M/\Z_2$, the hypercohomology ${\mathbb H}^{-\bullet}(M/\Z_2,\text{C}_{\bullet}(\mathcal C^{\infty}_{M/\Z_2}))$ is precisely the homology $\text{H}_{\bullet}(\Gamma(M/\Z_2,\text{C}_{\bullet}(\mathcal C^{\infty}_{M/\Z_2})))$ of the complex of global sections of $\text{C}_{\bullet}(\mathcal C^{\infty}_{M/\Z_2})$. The Hochschild $k$-chains in this complex are germs along the principal diagonal $X \rar X^k$ of smooth functions on $X^k$, where $X=M/\Z_2$. We are therefore, reduced to showing that the Hochschild complexes $\text{C}_{\bullet}(\text{C}^{\infty}_{M/\Z_2})$ and $\Gamma(M/\Z_2,\text{C}_{\bullet}(\mathcal C^{\infty}_{M/\Z_2}))$ give the same homology. The localisation argument in Section 3 of ~\cite{Te}, which works for $M/\Z_2$ as well, does exactly this.
\end{proof}

We postpone the proof of the following proposition to Section 4. Let $i_l:M^{\gamma}_{2l} \rar M$ denote the natural inclusion from the collection $M^{\gamma}_{2l}$ of codimension $2l$-components of $M^{\gamma}$ into $M$

\begin{prop} \la{qism}
As complexes of presheaves on $M$, $\text{C}_{\bullet}(\Aad((\hb_1))((\hb_2))$ is quasiisomorphic to $\underline{\c}((\hb_1))((\hb_2))[2n] \bigoplus \oplus_{l \geq 1} i_{l*}\underline{\c}((\hb_1))((\hb_2))[2n-2l]$.
\end{prop}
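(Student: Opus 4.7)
The plan is to verify the quasi-isomorphism locally on the base $\mathcal{B}_{\epsilon}$, then glue. Because Hochschild chains of presheaves form a presheaf of complexes, it suffices to compute the homology of $\text{C}_\bullet(\Gamma(V,\Aad((\hb_1))((\hb_2))))$ for each $V$ in $\mathcal{B}_{\epsilon}$ and check compatibility with restrictions.

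First, on opens $V\subset M^-$, the algebra $\Gamma(V,\Aad((\hb_1))((\hb_2)))$ is a Fedosov-type quantization of $\text{C}^{\infty}(V)$ (with the appropriate $\Z_2$-twist when $V$ is $\Z_2$-invariant). The standard Fedosov--Nest--Tsygan localization argument for deformation quantizations of symplectic manifolds, combined with the Poincar\'e lemma, gives a presheaf quasi-isomorphism $\text{C}_{\bullet}(\Aad((\hb_1))((\hb_2)))|_{M^-}\simeq \underline{\c}((\hb_1))((\hb_2))[2n]$. For tubular neighborhoods $N_{\alpha,U}$ with $U\subset M^{\gamma}_{2l}$ a higher-codimension component of $M^{\gamma}$ (i.e.\ $l\geq 2$), the algebra is the $\Z_2$-invariant part of a Fedosov quantization on $N_{\alpha,U}$. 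The combination of Theorem~\ref{one} (applied fiberwise to the transverse $\Z_2$-action on the Weyl algebra) together with a relative Fedosov localization along $U$ yields two sheaf contributions: a $\underline{\c}((\hb_1))((\hb_2))[2n]$ from the trivial conjugacy class, and an $i_{l*}\underline{\c}((\hb_1))((\hb_2))[2n-2l]$ from the unique nontrivial symplectic-reflection class of codimension $2l$.

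The interesting case is $N_{\alpha,U}$ with $U\subset M_2^{\gamma}$, where the presheaf involves the Dunkl--Weyl factor. Here I would exploit the fact that, by the construction in Section~3.1.1, $\Gamma(N_{\alpha,U},\Aad((\hb_1))((\hb_2)))$ is a deformation of $\text{C}^{\infty}(N|_U)^{\Z_2}((\hb_1))((\hb_2))$ whose local model is the tensor product $\W_{n-1}((\hb_1))\otimes_{\c((\hb_1))}\dk((\hb_1))((\hb_2))$, glued by a Fedosov connection along $U$. A K\"unneth-type argument (set up on the bornological Hochschild chain complex) combines Proposition~\ref{p2} (giving $\c((\hb_1))((\hb_2))$ in degrees $0$ and $2$ for $\dk((\hb_1))((\hb_2))$) with the Fedosov localization along $U$ (giving the de Rham complex of $U$, i.e.\ the constant sheaf shifted by $2n-2$). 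After sheafification and applying the Poincar\'e lemma to $U$, this produces exactly the two contributions $\underline{\c}((\hb_1))((\hb_2))[2n]$ and $i_{1*}\underline{\c}((\hb_1))((\hb_2))[2n-2]$ on $N_{\alpha,U}$, matching the right-hand side.

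The main obstacle is executing the K\"unneth decomposition cleanly in the bornological setting and, more importantly, verifying that these local quasi-isomorphisms assemble into a global presheaf map. The gluing requires that on overlaps $N^*_{\alpha,U}\subset M^-$, the ``bulk'' contribution $\underline{\c}((\hb_1))((\hb_2))[2n]$ computed from the Dunkl--Weyl model agrees with the one computed from the Fedosov model on $M^-$; this is what the commutative diagram in Proposition~\ref{p5} guarantees at the level of algebras, and one must promote it to the level of Hochschild chains. As suggested in the excerpt, the localization technique of \cite{Te} (already used in the proof of Proposition~\ref{p8}) together with the filtration by powers of $\hb_2$ will provide the spectral-sequence comparison that reduces the global statement to the fiberwise computations above, where the associated graded is just the ordinary orbifold quantization treated in \cite{PPT}, \cite{PPTT}. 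The desired quasi-isomorphism then follows by the complete convergence theorem.
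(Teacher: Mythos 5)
There is a genuine gap. Your plan is to compute the homology of $\text{C}_\bullet(\Gamma(V,\Aad((\hb_1))((\hb_2))))$ for each $V$ in $\mathcal{B}_\epsilon$, identify each local answer with the local sections of the right-hand side, and then ``glue.'' But a quasi-isomorphism in $\text{D}(\text{PrSh}_{\K}(M))$ is \emph{a map} (or zigzag of maps) of complexes of presheaves inducing local isomorphisms on homology; local isomorphisms of homology groups together with ``compatibility with restrictions'' do not by themselves produce such a map. Local quasi-isomorphisms do not glue in the naive sense: without an explicit global comparison map you are only matching abstract cohomology groups, not proving the stated derived-category statement. Your own remark — that the Proposition~\ref{p5} diagram must be ``promoted to the level of Hochschild chains'' — is precisely where the argument stops rather than where it finishes: Proposition~\ref{p5} gives compatibility of algebra products across the patching, not compatibility of chain-level quasi-isomorphisms, and promoting one to the other is the entire content of the missing step.

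The paper avoids this by doing the opposite: it \emph{first} constructs explicit global maps $\mu$ (from the trace density $\chi_{FFS}$ over $M^-$, extended across the codimension-$2$ stratum via $Rj_*$) and $\lambda$ (from the $\mathfrak{sp}_{2n-2}\oplus\mathfrak{u}_1$-basic cocycle $\psi_{2n-2}$ and the Fedosov connection on $\Ww\otimes\V$, Section 4.2), and only then verifies locally — case by case on $\mathcal{B}_\epsilon$ — that $\mu\oplus\lambda$ is a quasi-isomorphism. In Case 2 ($V=N_{\alpha,U}$, $U\subset M_2^{\gamma}$) the crucial content is a derived-category $\text{Hom}$ computation ($\text{Hom}(i_*\underline{\K}_U[2n-2],\underline{\K}_V[2n])=0$, etc.) pinning down the matrix of $\mu\oplus\lambda$ in the decomposition $\underline{\K}_V[2n]\oplus i_*\underline{\K}_U[2n-2]$, together with the local Darboux statement (Proposition~\ref{locdarb}) and the explicit evaluation \eqref{localf2} showing the diagonal entries are nonzero. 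None of these ingredients appear in your argument, and they cannot be replaced by the $\hb_2$-filtration spectral sequence plus complete convergence you invoke at the end — that machinery identifies graded pieces, but again it does not manufacture the global map that the statement requires. Your local computations (Fedosov localization on $M^-$, Proposition~\ref{p2} near $M_2^\gamma$, Theorem~\ref{one} near higher-codimension strata) are consistent with the answer, but they are the input to the paper's argument, not a proof of the proposition.
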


The following theorem is an immediate consequence of Proposition~\ref{p7} and Proposition~\ref{qism}.

\begin{theorem} \la{three}
\begin{equation}\la{a20} \hh_{\bullet}(\mathfrak{A}_{M/\Z_2}((\hb_1))((\hb_2)))
\cong (\text{H}^{2n-\bullet}(M,\c)^{\Z_2} \bigoplus \oplus_{l \geq
1}
 \text{H}^{2n-2l-\bullet}(M_{2l}^{\gamma},\c))((\hb_1))((\hb_2)) \text{.}\end{equation}
\end{theorem}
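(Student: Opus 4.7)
The theorem is presented as an immediate consequence of the two preceding propositions, so the plan is simply to chain them and compute. First I would apply Proposition~\ref{p7} to obtain the identification
$$\hh_{\bullet}(\mathfrak{A}_{M/\Z_2}((\hb_1))((\hb_2))) \cong {\mathbb H}^{-\bullet}(M,\text{C}_{\bullet}(\Aad((\hb_1))((\hb_2))))^{\Z_2}.$$
Then I would substitute the quasiisomorphism supplied by Proposition~\ref{qism} for the coefficient complex appearing on the right-hand side. For this substitution to be legitimate, the quasiisomorphism of Proposition~\ref{qism} must be $\Z_2$-equivariant; this is compatible with its construction in Section~4, because $\underline{\c}((\hb_1))((\hb_2))$ is a constant sheaf and each $M_{2l}^{\gamma}$ is, by definition of the fixed-point locus, pointwise fixed by the $\Z_2$-action, so the $\Z_2$-action on each direct summand is canonically the trivial one.

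Since $\c$ has characteristic zero, the functor of $\Z_2$-invariants is exact and therefore commutes with hypercohomology. Applied summand by summand, this gives: for the shifted constant sheaf $\underline{\c}((\hb_1))((\hb_2))[2n]$ on $M$, one has
$${\mathbb H}^{-\bullet}(M,\underline{\c}((\hb_1))((\hb_2))[2n]) \cong H^{2n-\bullet}(M,\c)((\hb_1))((\hb_2)),$$
whose $\Z_2$-invariant part is $H^{2n-\bullet}(M,\c)^{\Z_2}((\hb_1))((\hb_2))$. For each summand $i_{l*}\underline{\c}((\hb_1))((\hb_2))[2n-2l]$ with $l\geq 1$, exactness of $i_{l*}$ along the closed inclusion $i_l$ identifies the hypercohomology with $H^{2n-2l-\bullet}(M_{2l}^{\gamma},\c)((\hb_1))((\hb_2))$; because the $\Z_2$-action on $M_{2l}^{\gamma}$ is trivial, taking $\Z_2$-invariants is the identity on this summand. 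Assembling the direct sum decomposition recovers precisely the right-hand side of \eqref{a20}.

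The real difficulty has been offloaded to Proposition~\ref{qism}, whose proof is postponed to Section~4 and uses trace densities for the presheaf $\Aad((\hb_1))((\hb_2))$ and the local Riemann--Roch formula of Theorem~\ref{five}. For the present theorem itself, the only subtle point is the $\Z_2$-equivariance of the quasiisomorphism; once that is granted from the construction in Section~4, everything else is a routine identification of hypercohomology of a constant sheaf (respectively, its pushforward from a closed submanifold) with singular cohomology of the underlying space.
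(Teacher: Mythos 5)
Your proposal is correct and follows the same route as the paper, which simply declares Theorem~\ref{three} an immediate consequence of Proposition~\ref{p7} and Proposition~\ref{qism}; you have supplied the routine intermediate steps (exactness of $\Z_2$-invariants in characteristic zero, exactness of pushforward along the closed inclusions $i_l$, and triviality of the $\Z_2$-action on each $M_{2l}^{\gamma}$ so that the invariants drop out there) that the paper leaves implicit, and you have correctly flagged $\Z_2$-equivariance of the quasiisomorphism as the one point that must be read off from the Section~4 construction.
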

We remark that the right hand side of ~\eqref{a20} is the Chen-Ruan (or "stringy") cohomology $\text{H}^{2n-\bullet}_{\text{CR}}(M/\Z_2,\c)((\hb_1))((\hb_2))$ of the orbifold $M/\Z_2$.

\subsection{Hochschild cohomology of $\mathfrak{A}_{M/\Z_2}$}.

Given Theorem~\ref{three}, one is initially tempted to follow the Van Den Bergh duality arguments in Proposition 6 of ~\cite{DE} and then appeal to Van Den Bergh duality to compute $\hh^{\bullet}(\mathfrak{A}_{M/\Z_2}((\hb_1))((\hb_2)),\mathfrak{A}_{M/\Z_2}((\hb_1))((\hb_2)))$. However, even in the bornological setting, it is not clear that $\mathfrak{A}_{M/\Z_2}((\hb_1))((\hb_2))$ has a bimodule resolution by finitely generated projective $\mathfrak{A}_{M/\Z_2}((\hb_1))((\hb_2))$-bimodules. We therefore, adopt different method inspired by ~\cite{Menc} to show that one has a Van Den Bergh duality type isomorphism relating Hochschild cohomology to homology in our situation.\\

Let $A:=\A_M((\hb))$ be the Fedosov quantization of $\text{C}^{\infty}(M)$ with Weyl curvature $\omega$. Consider the algebra $B:=A \rtimes \Z_2$. Following ~\cite{PPT} and ~\cite{EF}, one can construct trace density maps
$$\chi_l: \text{C}_{\bullet}(B) \rar \Omega^{2n-2l-\bullet}_{M^{\gamma}_{2l}} $$
for each $l \geq 0$. Here, for $l > 1$, $M^{\gamma}_{2l}$ denotes the collection of components of $M^{\gamma}$ of codimension $2l$. By convention,
$M^{\gamma}_0:=M$. Recall, for instance, from ~\cite{DE} that $\text{HH}_{\bullet}(B)= (\text{HH}_{\bullet}(A) \oplus \text{HH}_{\bullet}(A,A_{\gamma}))^{\Z_2}$. The map induced on homologies by $\chi_0$, after this identification, coincides with the map induced on homologies by the trace denisty $\chi_{FFS}$ that one may construct in the situation of ~\cite{FFS} following ~\cite{EF}. We also remark that the constructions of the $\chi_l$ are local in nature on $X:=M/\Z_2$. Let $\theta$ be a Hochschild $2n$-cycle of $B$ such that $\chi_0(\theta)=1$. The cap product with $\theta$ gives a map of complexes
$$\theta \cap -: \text{C}^{\bullet}(B,B) \rar \text{C}_{2n-\bullet}(B) \text{.}$$

\begin{prop} \la{vdb1} $\theta \cap -$ is a quasi-isomorphism. \end{prop}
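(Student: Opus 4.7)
The plan is to localize on $M/\Z_2$ via sheafification, reduce to a stalk-local statement on a $\Z_2$-invariant cover of $M$, and verify the resulting quasi-isomorphism using Van den Bergh duality on the local models.

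First, parallel to Section 3.1 and Propositions~\ref{p7}--\ref{p8}, I would construct presheaves of bornological complexes $\underline{\text{C}_\bullet(B)}$ and $\underline{\text{C}^\bullet(B,B)}$ on $M/\Z_2$ whose hypercohomologies recover $\hh_\bullet(B)$ and $\hh^\bullet(B,B)$ respectively. Because each trace density $\chi_l$ is a local construction on $M/\Z_2$ (as noted in the text), the $2n$-cycle $\theta$ with $\chi_0(\theta)=1$ is represented locally on a fine enough base of opens, and the cap product $\theta \cap -$ descends to a morphism of presheaves of complexes. By a \v{C}ech-to-hypercohomology argument, it then suffices to show $\theta \cap -$ is a quasi-isomorphism on each member of a sufficiently fine $\Z_2$-invariant cover $\mathfrak{V}$ of $M$.

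Second, verify the local cases. On $U \in \mathfrak{V}$ with $U \subset M \setminus M^\gamma$, the algebra $B|_U$ is Morita equivalent to the Fedosov quantization $\A_{U/\Z_2}((\hb))$ of a $2n$-dimensional symplectic manifold; Van den Bergh duality holds for Fedosov quantizations (the Weyl algebra is Calabi--Yau of dimension $2n$), and the cap product with the class fixed by $\chi_0(\theta)=1$ implements this duality. On a tubular neighborhood of a component of $M_2^\gamma$ the local model for $B$ is a completed bornological tensor product $\A_{n-1}((\hb)) \,\hat{\otimes}\, (\A_1((\hb)) \rtimes \Z_2)$; the factor $\A_1 \rtimes \Z_2$ is Calabi--Yau of dimension $2$ (a classical fact for Weyl algebras crossed with finite symplectic groups), so a bornological K\"unneth argument combined with Van den Bergh duality on each factor yields the local duality, once again realized by cap product with the top local cycle pinned down by the trace densities. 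The local quasi-isomorphisms assemble into a global one because a morphism of presheaves of complexes that is a quasi-isomorphism on a good cover induces an isomorphism on hypercohomology.

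The main obstacle is organizing Van den Bergh duality in the bornological category for the local model near $M_2^\gamma$, and checking that the global cycle $\theta$ restricts, up to Hochschild boundaries, to the correct local top cycles along the presheaf restriction maps. Under the decomposition $\hh_\bullet(B) \cong (\hh_\bullet(A) \oplus \hh_\bullet(A, A_\gamma))^{\Z_2}$, the normalization $\chi_0(\theta) = 1$ controls the untwisted summand while the twisted summand is controlled by $\chi_l$ for $l \geq 1$; both contributions must be handled simultaneously by the same $\theta$, which is precisely why a sheaf-theoretic local argument is needed in place of a direct application of Van den Bergh duality to $B$ itself (which, as noted in the introduction, is not known to admit a finitely generated projective bimodule resolution in the required sense).
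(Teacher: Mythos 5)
Your high-level framework agrees with the paper: both localize the cap product to a sheaf-theoretic statement on $X=M/\Z_2$ and reduce to a local verification on a fine cover. However, the way you handle the two genuinely nontrivial points differs from the paper, and at least one of those differences is a real gap rather than an alternative argument.

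First, the paper does not merely ``sheafify'' $\text{C}^\bullet(B,B)$: it introduces the subcomplex $\text{C}^\bullet_{loc}(B,B)$ of cochains satisfying a support/locality condition, shows via Proposition~4.1 of~\cite{PPTT} that this inclusion is a quasi-isomorphism, and then observes that the locality condition is exactly what makes $\theta\cap\Psi$ well defined when $\theta$ is not compactly supported. Your proposal asserts that ``the cap product $\theta\cap-$ descends to a morphism of presheaves of complexes'' but gives no mechanism for why a global $2n$-cycle can be capped against local cochains; this is not automatic for the ordinary cochain presheaf and needs the locality subcomplex or an equivalent device. You flag this as ``the main obstacle'' at the end, but do not resolve it.

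Second, and more seriously, your local verification near $M_2^\gamma$ rests on the assertion that ``$\A_1\rtimes\Z_2$ is Calabi--Yau of dimension $2$ (a classical fact for Weyl algebras crossed with finite symplectic groups),'' followed by a ``bornological K\"unneth argument.'' The classical Calabi--Yau result you invoke concerns the polynomial Weyl algebra $\c\langle x,y\rangle/([x,y]=1)$ crossed with $\Z_2$. Here the relevant local model is the bornological algebra $\text{C}^\infty(D_\epsilon)((\hb))$ with a Moyal-type product, crossed with $\Z_2$; whether this algebra admits a finitely generated projective bimodule resolution (the hypothesis underlying Van~den~Bergh duality) is exactly the issue the paper says it cannot resolve for $\mathfrak{A}_{M/\Z_2}$, and it is no clearer for the local model. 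Invoking the Calabi--Yau property here essentially begs the question. The paper instead does an explicit computation: it uses the Koszul-type resolution of $\A_{cpt,V}$ from~\cite{PPTT}, writes the twisted cohomology in terms of the complex $K^\bullet_\gamma=\wedge^\bullet W\otimes\A_V$, produces the explicit generator $\Psi_\gamma$ and the explicit cycle $c_{2n}$, and verifies by hand (using the shuffle-product factorization $c_{2n}=c_{2n-2l}\times c_{2l}$) that $c_{2n}\cap\Psi_\gamma=c_{2n-2l}$. The reduction from $\theta|_U$ to $c_{2n}$ is justified precisely because $\chi_0(\theta)=1=\chi_{FFS}(c_{2n})$, so they represent the same homology class --- this also addresses your worry about matching the global cycle to local top cycles. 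Replacing this explicit computation with an appeal to an unestablished bornological Calabi--Yau/K\"unneth package leaves the core of the proposition unproved.
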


\begin{proof}
 Recall that $X:=M/\Z_2$. Let $\C_{B,\bullet}$ denote the chain complex of sheaves on $X$ whose $k$-chains are given by the sheaf associated with the presheaf $$ U \mapsto \text{C}_{k}(\A(\pi^{-1}(U)) \rtimes \Z_2) \text{.}$$ Here, $\pi:M \rar X$ is the natural projection. Let $\C^{\bullet}_B$ denote the cochain complex of sheaves (with Hochschild coboundary) whose sheaf of $k$-cochains is the sheafification of the presheaf
$$ U \mapsto \text{C}^{k}_{loc}(B(U),B(U)):= \text{Hom}_{loc}((\A_{cpt}(\pi^{-1}(U)) \rtimes \Z_2)^{\hat{\otimes}k},\A(\pi^{-1}(U)) \rtimes \Z_2) \text{.}$$
Here the subscript ``loc" means that we only include those cochain $\Psi$ of $\text{Hom}((\A_{cpt}(\pi^{-1}(U))$ $ \rtimes \Z_2)^{\hat{\otimes} k},\A(\pi^{-1}(U)) \rtimes \Z_2)$ such that \begin{equation} \la{loccond} \pi(\text{supp } \Psi(a_1,..,a_k)) \subset \cap_{i=1}^k \pi(\text{supp } a_i)\text{.}\end{equation} Let $\text{C}^{\bullet}_{loc}(B,B)$ denote the subcomplex of $\text{C}^{\bullet}(B,B)$ comprising of cochains satisfying the locality condition~\eqref{loccond}. By Proposition 4.1 of ~\cite{PPTT}, the natural map of complexes $\text{C}^{\bullet}_{loc}(B,B) \rar \text{C}^{\bullet}(B,B)$ is a quasi-isomorphism. Using a spectral sequence argument as well as a localisation procedure similar to that in \cite{Te}, ~\cite{NPPT} show that the natural maps $\text{C}_{\bullet}(B) \rar \C_{B,\bullet}(X)$ and $\text{C}^{\bullet}_{loc}(B,B) \rar \C^{\bullet}_{B}(X)$ are quasi-isomorphisms. Further, note that cap product with $\theta|_{U}$ gives a map of complexes from $\C^{\bullet}_{B}(U)$ to $\C_{B,2n-\bullet}(U)$ for each open $U \subset X$. To see why this is the case, observe that even though $\theta|_{U}$ is not compactly supported, its cap product with an element $\Psi$ of $\text{C}^{k}_{loc}(B(U),B(U))$ is well defined. This is because $\Psi(a_1,..,a_k)$ makes sense even if $a_1 \otimes ...\otimes a_k$ is not compactly supported: the condition~\eqref{loccond} is equivalent to the condition that for any $x$ in $\pi^{-1}(U)$, $\psi(a_1,...,a_k)(x)$ depends only on the jets of $a_1,..,a_k$ at $x$ and at $\gamma.x$ (see, for instance, the proof of Proposition 4.11 of ~\cite{PPTT}). It follows that $\theta \cap -$ is defined as a map of complexes of sheaves from $\C^{\bullet}_B$ to $\C_{B,2n-\bullet}$. Hence, one has the following commutative diagram,
$$\begin{CD}
\text{C}^{\bullet}(B,B) @>\theta \cap ->> \text{C}_{2n-\bullet}(B)\\
  @AAA       @AA{\text{id}}A\\
\text{C}^{\bullet}_{loc}(B,B) @>\theta \cap ->> \text{C}_{2n-\bullet}(B)\\
@VVV   @VVV\\
\C^{\bullet}_B(X) @>>\theta \cap -> \C_{B,2n-\bullet}(X) \\
\end{CD}.$$

Since the vertical arrows in the above diagram are quasi-isomorphisms, it suffices to check that the bottom arrow is a quasi-isomorphism. Since $\C_{B,\bullet}$ as well as $\C^{\bullet}_{B}$ are complexes of fine sheaves on $X$, the bottom arrow in the above diagram is the map induced by $\theta \cap -$ on hypercohomologies. It therefore, suffices to verify that $\theta \cap -:\C^{\bullet}_B \rar \C_{B,2n-\bullet}$ is a quasi-isomorphism of complexes of sheaves on $X$. This is a local verification. We outline it for a sufficiently small Darboux neighborhood $U:=V/\Z_2$ of a point on the singular locus of $X$. The (more straightforward) verification for $U$ a neighborhood of a point not contained in the singular locus is left to the reader.\\

Let $V$ be have Darboux coordinates $p_1,q_1,..,p_n,q_n$ with $x_{2i-1}:=p_i$ and $x_{2i}:=q_i$. By Theorem 5.5.1 of ~\cite{F}, we may assume that $\A_V$ is isomorphic to a quantum algebra with trivial Fedosov connection. Note that $\theta|_{U}$ is a Hochschild $2n$-cycle of $\A_V \rtimes \Z_2$ such that $\chi_0(\theta|_{U})=1$. We need to check that $\theta|_{U} \cap -$ is a quasiisomorphism. Recall that $\text{HH}_{\bullet}(\A_V \rtimes \Z_2)=(\text{HH}_{\bullet}(\A_V) \oplus \text{HH}_{\bullet}(\A_V,\A_{V,\gamma}))^{\Z_2}$. Consider the $\Z_2$-invariant (reduced) cycle
$$c_{2n}:= 1 \otimes \sum_{\sigma \in S_{2n}} \text{sgn}(\sigma) x_{\sigma(1)} \otimes....\otimes x_{\sigma(2n)} \text{.}$$
Further recall that
$$\text{HH}_{loc}^{\bullet}(\A_{cpt,V} \rtimes \Z_2,\A_V \rtimes \Z_2)= (\text{HH}^{\bullet}_{loc}(\A_{cpt,V},\A_V) \oplus \text{HH}^{\bullet}_{loc}(\A_{cpt,V},\A_{V,\gamma}))^{\Z_2}.
$$
Since  $\chi_0(\theta|_{U})=1$ and  $\chi_{FFS}(c_{2n})=1$, our verification will be complete once we check that $c_{2n} \cap -: \text{C}^{\bullet}_{loc}(\A_{cpt,V},\A_V) \rar \text{C}_{2n-\bullet}(\A_V)$ and $c_{2n} \cap -: \text{C}^{\bullet}_{loc}(\A_{cpt,V},\A_{V,\gamma}) \rar
\text{C}_{2n-\bullet}(\A_V,\A_{V,\gamma})$ are quasi-isomorphisms. $c_{2n} \cap -: \text{C}^{\bullet}_{loc}(\A_{cpt,V},\A_V) \rar \text{C}_{2n-\bullet}(\A_V)$ is indeed a quasi-isomorphism: the only nontrivial cohomology on the left hand side is generated by the $0$-cocycle $1$, and $c_{2n} \cap 1=c_{2n}$.\\

It remains to check that $c_{2n} \cap -: \text{C}^{\bullet}_{loc}(\A_{cpt,V},\A_{V,\gamma}) \rar
\text{C}_{2n-\bullet}(\A_V,\A_{V,\gamma})$ is a quasi-isomorphism. For this, we assume without loss of generality that $\gamma$ fixes $x_1,...,x_{2n-2l}$. As in the proof of Proposition 4.4 of ~\cite{PPTT}, one still has a Koszul resolution of $\A_{cpt,V}$ as $\A_{cpt,V} \otimes \A_{cpt,V}^{op}$-bimodules. Using this resolution, $\text{HH}^{\bullet}(\A_{cpt, V},\A_{V,\gamma})$ may be calculated as the cohomology of the complex
$$K^p_{\gamma}:= \wedge^p W \otimes \A_V $$
with
$$d_{\gamma}(a \otimes \partial y_{i_1} \wedge ....\wedge \partial y_{i_p}):= \sum_{j=1}^{2n} {(-1)}^j (y_j \star a-a \star_{\gamma} y_j) \partial y_j \wedge \partial y_{i_1} \wedge ....\wedge \partial y_{i_p} \text{.}$$
Here, $y_i:=x_{2i+1}$ for $1 \leq i \leq n$ and $y_i:=x_{2(i-n)}$ for $i=n+1,..,2n$. $W$ denotes the linear span of the $\partial y_i$'s. Further, one has a projection from the (reduced) cochain complex $\text{C}^{\bullet}_{loc}(\A_V,\A_{V,\gamma})$ to $K^{\bullet}_{\gamma}$. Moreover, there exists a generator $\Psi_{\gamma}$ of the unique nonzero cohomology of $\text{C}^{\bullet}_{loc}(\A_V,\A_{V,\gamma})$ whose restriction to $\wedge^{2l}W^*$ coincides with $\partial y_{2n-2l+1} \wedge .... \wedge \partial y_{2n}$. Now, $c_{2n}$ may be viewed as the shuffle product of $c_{2n-2l}$ formed out of $x_1,..,x_{2n-2l}$ and $c_{2l}$ formed out of $x_{2n-2l+1},....,x_{2n}$. This makes it easy to see that $$ c_{2n} \cap \Psi_{\gamma} = c_{2n-2l} $$
where the right hand side of the above equation is formed our of $x_1,..,x_{2n-2l}$. The latter is the generator of the (unique) nonzero homology $\text{HH}_{2n-2l}(\A_V,\A_{V,\gamma})$.
\end{proof}

\begin{theorem} \la{four}
$$\hh^{\bullet}(\mathfrak{A}_{M/\Z_2}((\hb_1))((\hb_2)),\mathfrak{A}_{M/\Z_2}((\hb_1))((\hb_2))) \cong \text{H}^{\bullet}_{\text{CR}}(M/\Z_2,\c)((\hb_1))((\hb_2)) \text{.}$$
\end{theorem}

\begin{proof}
 Let $A_0:=\A_{M/\Z_2}((\hb_1))$. Let $P:=\textbf{e}B$ and let $Q:=B\textbf{e}$ where $\textbf{e}:=\frac{1+\gamma}{2}$. There are natural bimodule isomorphisms $u: P \otimes_B Q \rar A_0$ and $v:Q \otimes_{A_0} P \rar B$. One checks that $(A_0,B,P,Q,u,v)$ forms a Morita context in the sense of Appendix A7 of ~\cite{PPTT}. It follows that $A_0$ and $B$ are Morita equivalent as bornological algebras.\\

 Since $\text{HH}_{2n}(B) \cong \c((\hb_1))$, Proposition~\ref{vdb1} can be restated to say that the map
 \begin{eqnarray}
  \nonumber  \text{HH}_{2n}(B) \otimes_{\c((\hb_1))} \text{HH}^{\bullet}(B,B) \rar \text{HH}_{2n-\bullet}(B)\\
  \la{vdb2}    \alpha \otimes \beta \mapsto \alpha \cap \beta
 \end{eqnarray}
 of $\c((\hb_1))$-modules is an isomorphism. Since $A_0$ and $B$ are Morita equivalent as bornological algebras,
 the map
 \begin{eqnarray}
  \nonumber  \text{HH}_{2n}(A_0) \otimes_{\c((\hb_1))} \text{HH}^{\bullet}(A_0,A_0) \rar \text{HH}_{2n-\bullet}(A_0)\\
  \la{vdb3}    \alpha \otimes \beta \mapsto \alpha \cap \beta
 \end{eqnarray}
 is an isomorphism of $\c((\hb_1))$-modules.\\

 Note that $\text{C}_{\bullet}(\mathfrak{A}_{M/\Z_2}((\hb_1))((\hb_2)))$ as well as $\text{C}^{\bullet}(\mathfrak{A}_{M/\Z_2}((\hb_1))((\hb_2)),\mathfrak{A}_{M/\Z_2}((\hb_1))((\hb_2)))$ are complexes filtered by powers of $\hb_2$. By Theorem~\ref{three} and $\c((\hb_1))((\hb_2))$-linearity, there exists a $2n$-cycle $\theta$ in $\text{C}_{\bullet}(\mathfrak{A}_{M/\Z_2}((\hb_1))[[\hb_2]])$ representing a nonzero homology class in $\text{HH}_{2n}(\mathfrak{A}_{M/\Z_2}((\hb_1))((\hb_2)))$. We claim that for a suitable choice of $\theta$, $\theta \cap -$ is an isomorphism.
 Let $\theta_{0}$ denote the image of $\theta$ under the map of chain complexes induced by the homomorphism $\mathfrak{A}_{M/\Z_2}((\hb_1))[[\hb_2]]$ mapping $\hb_2$ to $0$. Note that $\theta \cap -$ is a map of filtered complexes whose associated graded map is $\theta_0 \cap -$. Since the map~\eqref{vdb3} is an isomorphism, $\theta_0 \cap -$ is an isomorphism whenever $[\theta_0] \neq 0$.
 We therefore, need to show the existence of a $2n$-cycle $\theta$ of $\text{C}_{\bullet}(\mathfrak{A}_{M/\Z_2}((\hb_1))[[\hb_2]])$ such that $[\theta]$ is nonzero in $\text{HH}_{2n}(\mathfrak{A}_{M/\Z_2}((\hb_1))((\hb_2)))$ and $[\theta_0] \neq 0$. Proposition~\ref{vdbf} in Section 4.5 does exactly this, proving the desired theorem.

\end{proof}
\section{Traces.}
As in Section 2.4, $\K:=\c((\hb_1))((\hb_2))$.
We construct a trace density
$$\chi^{\gamma}:\text{C}_{\bullet}(\mathfrak{A}_{M/\Z_2}((\hb_1))((\hb_2))) \rar \Omega^{2n-2-\bullet}(M_2^{\gamma})((\hb_1))((\hb_2))\text{.}$$
This is a map of complexes of $\K$-vector spaces (with the right hand side equipped with the de-Rham differential). We prove an algebraic index theorem computing $\chi^{\gamma}(1)$. In fact, our construction of the trace density $\chi^{\gamma}$ extends to yield a map of complexes of presheaves $\text{C}_{\bullet}(\Aad((\hb_1))((\hb_2))) \rar i_{2*} \Omega^{2n-2-\bullet}_{M_2^{\gamma}}((\hb_1))((\hb_2))$ on $M$. This may be viewed as a map $\lambda:\text{C}_{\bullet}(\Aad((\hb_1))((\hb_2))) \rar i_{2*}\underline{\K}_{M_2^{\gamma}}[2n-2]$ in the derived category of presheaves on $M$. Similar trace densities $\lambda_l: \text{C}_{\bullet}(\Aad((\hb_1))((\hb_2))) \rar i_{2l*} \underline{\K}_{M_{2l}^{\gamma}}[2n-2l]((\hb_1))((\hb_2))$ can be constructed for $l >1$. The key parts of the latter construction are in ~\cite{PPT}. We skip the latter construction in order to avoid being repetitive. We also construct a map $\mu:\text{C}_{\bullet}(\Aad((\hb_1))((\hb_2))) \rar \underline{\K}_{M}[2n]$ in the derived category of presheaves on $M$. At the end of this section, we will show that $\lambda \oplus \mu$ is a quasi-isomorphism when $M^{\gamma}=M_2^{\gamma}$. More generally, the same method (with some more notation chasing) as that used in the end of this section shows that
$\oplus_{l>1} \lambda_l \oplus \lambda \oplus \mu$ is a quasi-isomorphism. This completes the proof of Theorem ~\ref{three}. The map $\mu$ mentioned above yields a trace $\chi_0$ on $\mathfrak{A}_{M/\Z_2}((\hb_1))((\hb_2))$ such that $\chi_0$ and $\chi^{\gamma}$ are linearly independent over $\c((\hb_1))((\hb_2))$. In Section 4.5, we discuss some basic properties of $\chi_0$. In the process, we obtain a proposition that completes the proof of Theorem~\ref{four}.

\subsection{Preliminaries.}
Notations in this section are as in Section 3.1.1. Recall that a sheaf $\Aa_{D}$ of algebras on $M_2^{\gamma}$ is constructed by constructing a Fedosov connection $D$ on $\bigwedge^{\bullet}T^*M_2^{\gamma} \otimes \Ww \otimes_{\c((\hb_1))} \V$. In ~\cite{HT}, $D$ was constructed starting with a symplectic connection $\nabla_{T}$ on $TM_2^{\gamma}$ and a Hermitian connection $\nabla_N$ on $N$. These induce connections $\partial_{T}$ and $\partial_{N}$ on $\Ww$ and $\V$ respectively. This makes $\nabla:=\partial_{T} \otimes 1 +1 \otimes \partial_N$ a connection on $\Ww \otimes_{\c((\hb_1))} \V$. $\nabla$ automatically extends to a connection on $\bigwedge^{\bullet}T^*M_2^{\gamma} \otimes \Ww \otimes_{\c((\hb_1))} \V$. The connection $D$ is then constructed as a sum
\begin{equation} \la{3.1} D=\nabla +[A,-]_{\hb_1} \end{equation}
where $A \in \Omega^1(M_2^{\gamma}, \Ww \otimes_{\c((\hb_1))} \V)$. Recall that $D^2=[\Theta,-]_{\hb_1}$ where $\Theta \in \Omega^2(M_2^{\gamma},\Ww \otimes_{\c((\hb_1))} \V)$ is central. This implies that $\Theta \in \Omega^2(M_2^{\gamma},\K)$.\\

Recall from equation ~\eqref{a11} that one has a map of Lie algebras $\mathfrak{sp}_{2n-2}(\K) \oplus \mathfrak{u}_{1}(\K) \rar \W_{n-1}((\hb_1)) \otimes_{\c((\hb_1))} \dk((\hb_1))((\hb_2))$. This induces a map $\mathfrak{sp}(TM_2^{\gamma}) \oplus \mathfrak{u}(N) \rar \Ww \otimes_{\c((\hb_1))} \V$ of bundles of Lie algebras over $M_2^{\gamma}$. We recall from ~\cite{HT} that
\begin{equation} \la{3.2} \nabla^2=[R_T+R_N,-]_{\hb_1} \end{equation}
where $R_T \in \Omega^2(M_2^{\gamma},\mathfrak{sp}(TM_2^{\gamma}))$ and $R_N \in \Omega^2(M_2^{\gamma},\mathfrak{u}(N))$. It follows that
\begin{equation} \la{3.3} \nabla A+\frac{1}{2}[A,A]_{\hb_1}=\Theta-R_T-R_N \text{.} \end{equation}
We remark that changing $\nabla_T$ and $\nabla_N$ changes $\nabla$ (and hence, $A$) by an element of $\Omega^1(M_2^{\gamma},\mathfrak{sp}(TM_2^{\gamma}) \oplus \mathfrak{u}(N))$.

\subsection{The first trace: construction of $\chi^{\gamma}$.}
The construction of $\chi^{\gamma}$ follows the trace density construction from ~\cite{EF}. Since this construction has been done in detail even in subsequent works (see ~\cite{PPT2},~\cite{W} and ~\cite{R},~\cite{S} for analogs of this construction for cyclic and Lie chains respectively), we just specify the steps in this construction without providing detailed proofs of their validity.\\

Let $U$ be an open subset of $M_2^{\gamma}$ on which $TM_2^{\gamma}$ and $N$ are trivial. Choosing trivializations of $TM_2^{\gamma}$ and $N$ over $U$, one obtains an identification of sheaves of DG-algebras

\begin{equation} \la{3.4} (\bigwedge^{\bullet}T^*M_2^{\gamma} \otimes \Ww \otimes_{\c((\hb_1))}\V,D)  \cong (\Omega^{\bullet}(U,\W_{n-1}((\hb_1)) \otimes_{\c((\hb_1))} \dk((\hb_1))((\hb_2))),d+[\theta,-]_{\hb_1}) \end{equation}

where $\theta \in \Omega^1(U,\W_{n-1}((\hb_1)) \otimes_{\c((\hb_1))} \dk((\hb_1))((\hb_2)))$ satisfies the Maurer-Cartan condition
$$d\theta +\frac{1}{2}[\theta,\theta]_{\hb_1} \in \Omega^2(U,\K) \text{.}$$
Recall that $\Aa_{D}$ is the sheaf of degree $0$ flat sections of $(\bigwedge^{\bullet}T^*M_2^{\gamma} \otimes \Ww \otimes_{\c((\hb_1))}\V,D)$. Let $\times$ denote the shuffle product on Hochschild chains and let $(\theta)^k$ denote the Hochschild chain $1\otimes \theta \otimes ...\otimes \theta$ with $k$ factors $\theta$. Following ~\cite{EF}, one constructs a map
\begin{eqnarray}
\nonumber \chi^{\gamma}: \text{C}_{\bullet}(\Gamma(U,\Aa_{D})) \rar \Omega^{2n-2-\bullet}_U((\hb_2,\hb_2))\\
\la{3.5} a \mapsto \sum_{k=0}^{\infty} {(-1)}^{\lfloor \frac{k}{2} \rfloor}\psi_{2n-2}(a \times (\theta)^k)
\end{eqnarray}
of complexes of sheaves on $U$. We remark that in ~\cite{EF}, $d\theta+\frac{1}{2}[\theta,\theta]=0$. However, as pointed out in ~\cite{PPT2}, equation ~\eqref{3.5} gives a map of complexes in our situation as well. Moreover, a different choice of trivialization of $TM_2^{\gamma}$ and of trivialization of $N$ changes $\theta$ by an element of $\Omega^1(U,\mathfrak{sp}_{2n-2}(\K) \oplus \mathfrak{u}_{1}(\K))$. Since the cocycle $\psi_{2n-2}$ is $\mathfrak{sp}_{2n-2}(\K) \oplus \mathfrak{u}_{1}(\K)$-basic (see Proposition ~\ref{p4}), this change leaves the map $\chi^{\gamma}$ unchanged. As a result, the map $\chi^{\gamma}$ is well defined globally on $M_2^{\gamma}$, giving us a map of complexes
$$\chi^{\gamma}:\text{C}_{\bullet}(\mathfrak{A}_{M/\Z_2}((\hb_1))((\hb_2))) \rar \Omega^{2n-2-\bullet}(M_2^{\gamma})((\hb_1))((\hb_2))$$
(note that there is a natural map of algebras $\mathfrak{A}_{M/\Z_2}((\hb_1))((\hb_2)) \rar \Aa_{D}$). Also, after a choice of trivialization of  $TM_2^{\gamma}$ and of $N$ over $U$, $A$ differs from $\theta$ by an element of $\Omega^1(U,\mathfrak{sp}_{2n-2}(\K) \oplus \mathfrak{u}_{1}(\K))$. It follows that equation ~\eqref{3.5} may be rewritten as
\begin{equation} \la{3.6} a \mapsto \sum_{k=0}^{\infty}{(-1)}^{\lfloor \frac{k}{2} \rfloor}\psi_{2n-2}(a \times (A)^k)\text{.} \end{equation}

\subsubsection{A basic example.} Let us consider the case when $M \subset \R^{2n}$ is an open neighborhood of the origin with symplectic form
$$ \omega = \sum_{i=1}^{n-1} dp_i \wedge dq_i +\frac{1}{2i} dz \wedge d\bar{z} \text{.}$$
Let $\Z_2$ act on $M$ such that $(p,q,z,\bar{z}) \mapsto
(p,q,-z,-\bar{z})$. Then $M_2^{\gamma}$ is given by the equation
$z=\bar{z}=0$ with $p_1,..,p_{n-1},q_1,..,q_{n-1}$ being Darboux
coordinates for $M_2^{\gamma}$. In this case, the connection
$\triangledown:= \partial_T \otimes 1+1 \otimes \partial_N$ (see
Section 4.1) may be chosen to be $d$ itself. Then, $D:=d+\delta$ is
itself a (trivial) Fedosov connection where
$\delta=\sum_{i=1}^{2n-2}dx^i\frac{\partial}{\partial y^i}$. We
remark that here, $x^{2i-1}=p_i$ and $x^{2i}=q_i$ with $y^i$ being
the corresponding fiberwise coordinates. For this trivial Fedosov
connection, the form $\theta$ is given by $\sum_{1 \leq i,j \leq
2n-2} \omega_{ij}y^idx^j$ where $\omega_{ij}=1$ if
$\{i,j\}=\{2k-1,2k\}$ for some $k$ and $\omega_{ij}=0$ otherwise.
Therefore, $$(\theta)^{2n-2}=\sum_{\sigma \in S_{2n-2}}
\text{sgn}(\sigma) 1 \otimes y^{\sigma(1)} \otimes.... \otimes
y^{\sigma(2n-2)} dp_1 \wedge dq_1 \wedge .... \wedge dp_{n-1} \wedge
dq_{n-1} \text{.} $$ For any function on $M$ of the form
$F=f(p,q).g(z ,\bar{z})$, we see that
$$ \chi^{\gamma}(F)= \psi_{2n-2}(F \times (\theta)^{2n-2}) = \phi(g).\tau_{2n-2}(f \times (\theta)^{2n-2}) $$
$$ = \phi(g)f dp_1 \wedge dq_1 \wedge ....\wedge dp_{n-1} \wedge dq_{n-1} \text{.}$$
Here, $\phi(g)$ is given by the formula in Theorem ~\ref{two}. We may therefore, rewrite the above local formula as
\begin{equation} \la{localtr1}
\begin{split}
\chi^{\gamma}(F)= \sum_{k=0}^{\infty}
\frac{(i\hb_1)^k}{(k!)^2}.\large({\prod}_{l=1}^k{
\{\frac{l}{2}+(-1)^{l+1}\frac{2\lfloor\frac{l+1}{2}\rfloor\hb_2}{l+1}\large\}}\large).&
\frac{\partial^{2k} F}{\partial z^k \partial
\bar{z}^k}|_{z=\bar{z}=0} \\
& dp_1 \wedge dq_1 \wedge ....\wedge dp_{n-1} \wedge dq_{n-1}.
\end{split}
\end{equation}
One can then argue as in the end of the proof of Theorem ~\ref{two}
to show that equation ~\eqref{localtr1} is valid for arbitrary
functions on $M$ as well. Another point to note is that
\begin{equation} \la{localf2} \chi^{\gamma}(\sum_{\sigma \in S_{2n-2}} \text{sgn}(\sigma) 1 \otimes y^{\sigma(1)} \otimes.... \otimes y^{\sigma(2n-2)})=1 \text{.} \end{equation}

\subsubsection{A remark for general $M$.}
We return to the general case, as in the beginning of this section.
We recall from ~\cite{F} (see Section 5) that for every $x \in
M_2^{\gamma}$, the symplectic form $\omega$ on $M$ may be identified
with $\sum_{i=1}^{n-1} dp_i \wedge dq_i +\frac{1}{2i} dz \wedge
d\bar{z}$ over $V:=N_{\alpha,U}$ for sufficiently small $\alpha$ for
some neighborhood $U$ of $x$ in $M_2^{\gamma}$. We denote the
trivial Fedosov connection in Example 4.2.1 by $D^0$. We also denote
the corresponding sheaf of algebras on $U$ by $\Aa_{D^0}$. Further,
by imitating the proof of Theorem 5.5.1 in ~\cite{F1}, one can prove
the following analog of the ``quantum Darboux theorem" (Theorem
5.5.1 of ~\cite{F1}).
\begin{prop} \la{locdarb}
$\Gamma(W,\Aa_D)$ is locally isomorphic to $\Gamma(W,\Aa_{D^0})$ for
some neighborhood $W$ of $x$ contained in $U$.
\end{prop}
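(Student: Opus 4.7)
The plan is to follow the strategy of Fedosov's original proof (Theorem 5.5.1 of \cite{F1}) while keeping track of $\Z_2$-equivariance and of the fact that the fiber algebra is $\dk((\hb_1))((\hb_2))$ rather than an ordinary Weyl algebra. After shrinking $U$ if necessary, choose $\Z_2$-invariant Darboux coordinates $p_1,q_1,\ldots,p_{n-1},q_{n-1},z,\bar z$ on a neighborhood $V = N_{\alpha,U}$ of $x$ in $M$ so that $\omega$ takes the standard form from Example 4.2.1; this is possible because $\omega$ is $\Z_2$-invariant and the equivariant Darboux theorem applies to the pair consisting of $M$ and the codimension-two fixed locus $M_2^\gamma$. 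In these coordinates the trivializations of $TM_2^\gamma$ and $N$ are canonical, so after applying the identification of \eqref{3.4} we may represent both $D$ and $D^0$ as $d + [\theta,-]_{\hb_1}$ and $d + [\theta^0, -]_{\hb_1}$ respectively, where $\theta$ and $\theta^0$ take values in $\W_{n-1}((\hb_1))\otimes_{\c((\hb_1))}\dk((\hb_1))((\hb_2))$ and each satisfies the Maurer--Cartan condition with central curvature.

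The second step is to compare curvatures. Both $\theta$ and $\theta^0$ have central curvature of the form $\omega + O(\hb_1)$, and by the construction recalled in Section 4.1 the $O(\hb_1)$ correction can be chosen in each case to lie in $\Omega^2(V,\K)$. A standard cohomological argument (as in Section 5 of \cite{F1}, using that the second de Rham cohomology of $V$ vanishes after shrinking) allows us, after replacing $\theta^0$ by a gauge-equivalent form, to assume that the two central curvatures agree. The remaining problem is to find $g\in \Gamma(V,\Ww \otimes_{\c((\hb_1))}\V)$ (with $g = O(\hb_1)$) such that $e^{\ad_{\hb_1} g}$ conjugates $D$ to $D^0$, equivalently such that
\begin{equation*}
\theta^0 = e^{\ad_{\hb_1} g}\theta - \frac{e^{\ad_{\hb_1} g}-1}{\ad_{\hb_1} g}(dg).
\end{equation*}

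The third step is the iterative construction of $g$. Filter the fiber algebra by the total $\hb_1$-adic degree combined with polynomial degree in the fiber coordinates, so that the leading order of the equation above becomes a linear equation solvable by the homotopy operator $\delta^{-1}$ attached to the Koszul differential $\delta = \sum dx^i \partial/\partial y^i$; this is exactly the step at the heart of Fedosov's proof. One then proceeds by induction on the filtration degree, at each stage correcting $g$ by a term in $\ker\delta^{\perp}$ to cancel the obstruction. The main obstacle is to check that this iteration can be carried out $\Z_2$-equivariantly and within the Dunkl-Weyl fiber algebra: the Dunkl-Weyl deformation parameter $\hb_2$ couples the $U(1)$-part of the algebra to the reflection $\gamma$, so one must verify that the homotopy $\delta^{-1}$, which acts only in the fiber coordinates $y^i$ on the $\W_{n-1}((\hb_1))$ factor, preserves both $\Z_2$-invariance and the precise form of the commutator \eqref{a3}. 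Since $\delta^{-1}$ touches only the normal-to-$M_2^\gamma$ Weyl factor and commutes with the $U(1)$-action (hence with $\gamma$) on the $\dk$-factor, this compatibility holds, and the iteration converges in the $\hb_1$-adic (and hence $\hb_2$-adic) topology.

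Finally, once $g$ is constructed, the automorphism $e^{\ad_{\hb_1} g}$ of $\Gamma(W,\Ww \otimes_{\c((\hb_1))}\V)$ carries $D$-flat sections to $D^0$-flat sections and is $\Z_2$-equivariant by construction, so it descends to an isomorphism of sheaves of algebras $\Gamma(W,\Aa_D) \cong \Gamma(W,\Aa_{D^0})$ on a possibly smaller neighborhood $W$ of $x$, as desired.
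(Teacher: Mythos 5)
Your proposal is correct and is exactly what the paper intends: the paper's own ``proof'' consists solely of the instruction to imitate the proof of Fedosov's Theorem 5.5.1 in \cite{F1}, and you have carried that imitation out, correctly identifying that the key new point is that the Koszul homotopy $\delta^{-1}$ acts only in the fiber variables $y^i$ of the $\W_{n-1}((\hb_1))$ factor and therefore does not interfere with the Dunkl--Weyl factor $\dk((\hb_1))((\hb_2))$ or with $\Z_2$-equivariance. One small slip: where you write that $\delta^{-1}$ ``touches only the normal-to-$M_2^\gamma$ Weyl factor,'' it is the \emph{tangential} factor $\W_{n-1}((\hb_1))$ (associated to $TM_2^\gamma$) that $\delta^{-1}$ acts on; the normal direction is precisely the $\dk$-factor that it leaves alone, as your next clause correctly states.
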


\subsection{The algebraic index theorem.}
Let $\Theta,R_T,R_N$ be as in Section 4.1.

\begin{theorem} \la{six}
The $2n-2$-form
$$\chi^{\gamma}(1)- \hb_1^{n-1}(\hat{A}({R_T})\text{Ch}(-\frac{\Theta}{\hb_1})\text{Ch}_{\phi}(\frac{R_N}{\hb_1}))_{n-1} $$
 is an exact form on $M_2^{\gamma}$.
 \end{theorem}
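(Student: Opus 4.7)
\medskip
\noindent\textbf{Proof proposal.} The plan is to reduce the statement to the local Riemann-Roch Theorem \ref{five}, interpreting $\chi^{\gamma}(1)$ as a Chern-Weil form for the Fedosov connection $A$. First observe that $1$ is a Hochschild $0$-cycle, so since $\chi^{\gamma}$ is a chain map from the Hochschild complex to the de Rham complex, $\chi^{\gamma}(1)$ is a closed $(2n-2)$-form on $M_2^{\gamma}$. Thus both sides of the asserted identity are closed, and it will suffice to prove they agree in de Rham cohomology, since then I can refine the comparison to an equality up to an exact form by the usual Chevalley-Eilenberg--to--de Rham chain map argument.

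The bulk of the work is local. Pick a cover $\{U\}$ of $M_2^{\gamma}$ on which $TM_2^{\gamma}$ and $N$ are trivialized, so that we have the identification \eqref{3.4}. By the degree-counting argument (the shuffle $1\times A^k$ has $k{+}1$ tensor factors and $\psi_{2n-2}$ takes $2n-1$ arguments), only the $k=2n-2$ term in \eqref{3.6} survives, giving
\[
\chi^{\gamma}(1)\big|_U=(-1)^{n-1}\,\psi_{2n-2}(1\times A^{2n-2}).
\]
Now view $A\in \Omega^1(U,\W_{n-1}((\hb_1))\otimes_{\c((\hb_1))}\dk((\hb_1))((\hb_2)))$ as a $\mathfrak g$-valued connection one-form (with $\mathfrak g$ as in Section 2.4, taking $N=1$ and the diagonal embedding into $\mathfrak{gl}_N(R)$). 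Under the Lie-cochain interpretation of Sections 3.1--3.2 of \cite{FFS}, the expression $\psi_{2n-2}(1\times A^{2n-2})$ is exactly the image of $\mathrm{ev}_1\Psi_{2n-2}$ under the Chern-Weil-type map $\mathrm{CW}_A:\mathrm{C}^{2n-2}_{\mathrm{Lie}}(\mathfrak g,\mathfrak h;\K)\to \Omega^{2n-2}(U;\K)$ induced by the connection $A$ and its projection onto the $\mathfrak h$-part (this map sends a Lie cocycle to the corresponding characteristic form). Crucially, $\mathrm{CW}_A$ is a chain map, so cohomologous cocycles go to forms differing by exact forms.

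By Theorem \ref{five}, $[\mathrm{ev}_1\Psi_{2n-2}]=(-1)^{n-1}\chi\bigl((\hat A_{\hb_1}\mathrm{Ch}_\phi \mathrm{Ch})_{n-1}\bigr)$ in Lie cohomology. Applying $\mathrm{CW}_A$, the right-hand side becomes the ordinary Chern-Weil form obtained by evaluating the invariant polynomial $(\hat A_{\hb_1}\mathrm{Ch}_\phi\mathrm{Ch})_{n-1}$ on the curvature of the $\mathfrak h$-part of $A$. By \eqref{3.3}, the curvature $\nabla A+\frac12[A,A]_{\hb_1}$ decomposes as $\Theta-R_T-R_N$, with $R_T$ in $\mathfrak{sp}(TM_2^{\gamma})$, $R_N$ in $\mathfrak u(N)$, and $\Theta$ central (i.e.\ in the $\mathfrak{gl}_1$-slot). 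Feeding this into the formula $\hat A_{\hb_1}\mathrm{Ch}_\phi\mathrm{Ch}(X_1\oplus X_2\oplus X_3)=\det^{1/2}\!\bigl(\tfrac{\hb_1 X_1/2}{\sinh(\hb_1 X_1/2)}\bigr)\phi(\exp_\star X_2)\,\mathrm{tr}(\exp X_3)$ and tracking the $\hb_1$-scaling (so that $X_1=R_T$ contributes $\hat A(R_T)$ at degree $n-1$ up to the factor $\hb_1^{n-1}$, while the central $\Theta$-slot gives the $\mathrm{Ch}(-\Theta/\hb_1)$-factor and the $\mathfrak u_1$-slot gives $\mathrm{Ch}_\phi(R_N/\hb_1)$) yields exactly $\hb_1^{n-1}(\hat A(R_T)\mathrm{Ch}(-\Theta/\hb_1)\mathrm{Ch}_\phi(R_N/\hb_1))_{n-1}$ as the image under $\mathrm{CW}_A$ on $U$. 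Hence the two forms agree up to an exact form on $U$.

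The main obstacle is globalization and making the signs and $\hb_1$-powers match with the conventions of Theorem \ref{five}. Globalization proceeds as in the classical argument of \cite{FFS,PPT}: on overlaps $U\cap U'$, changes of trivialization of $TM_2^{\gamma}$ and $N$ alter $\theta$ (and hence $A$) by $\Omega^1(U\cap U',\mathfrak{sp}_{2n-2}(\K)\oplus\mathfrak u_1(\K))$, and the $(\mathfrak{sp}_{2n-2}\oplus\mathfrak u_1)$-basic property of $\psi_{2n-2}$ (Proposition \ref{p4}) together with the $\mathfrak h$-invariance of $(\hat A_{\hb_1}\mathrm{Ch}_\phi\mathrm{Ch})_{n-1}$ ensures that the Chern-Weil interpretation is independent of these choices. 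Both $\chi^{\gamma}(1)$ and the right-hand side are globally well-defined closed forms, and the local primitives assembled over a partition of unity (combined with the fact that the local primitives are themselves cocycle transgressions in Chevalley-Eilenberg cohomology) produce a global primitive of the difference, establishing that $\chi^{\gamma}(1)-\hb_1^{n-1}(\hat A(R_T)\mathrm{Ch}(-\Theta/\hb_1)\mathrm{Ch}_\phi(R_N/\hb_1))_{n-1}$ is exact on $M_2^{\gamma}$.
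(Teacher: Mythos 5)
Your argument is correct in outline and follows the same strategy as the paper's proof: localize, reduce the sum in~\eqref{3.6} to the single surviving $k=2n-2$ term, interpret $\psi_{2n-2}((A)^{2n-2})$ via the Lie cochain $\mathrm{ev}_1\Psi_{2n-2}$, apply Theorem~\ref{five}, and evaluate the resulting Chern--Weil form using~\eqref{3.3}. Two steps need tightening, however. First, when passing from $\chi(P)(A\wedge\cdots\wedge A)$ to the characteristic form in $\Theta-R_T-R_N$, the Chern--Weil map $\chi$ is built from the curvature $C$ of the $\mathfrak{h}$-equivariant projection $\mathrm{pr}$ from~\eqref{1.4.1}, not from $\nabla A+\tfrac12[A,A]_{\hb_1}$ directly; the identification $C(A(\xi),A(\eta))=(\Theta-R_T-R_N)(\xi,\eta)$ requires the normalization $\mathrm{pr}(A)=0$ (available as in~\cite{FFS}), which you should state explicitly before feeding~\eqref{3.3} into the invariant polynomial, since only under that normalization does $C(A(\xi),A(\eta))=-\mathrm{pr}[A(\xi),A(\eta)]_{\hb_1}$ reduce to the desired expression.

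Second, your globalization via partition of unity is both unnecessary and, as written, incorrect: gluing local primitives $\alpha_U$ by $\sum_U \rho_U\alpha_U$ produces the spurious error term $\sum_U d\rho_U\wedge\alpha_U$, so this does not yield a primitive of the difference. The clean argument is that the Lie coboundary $\beta$ with $\mathrm{ev}_1\Psi_{2n-2}-(-1)^{n-1}\chi(P)=d_{\mathrm{Lie}}\beta$ lies in the relative (hence $\mathfrak{h}$-basic) complex $\mathrm{C}^{\bullet}_{\mathrm{Lie}}(\mathfrak{g},\mathfrak{h};\K)$, so the associated form $\beta\bigl((A)^{2n-3}\bigr)$ is independent of the local trivializations by exactly the same basic-cocycle reasoning (Proposition~\ref{p4}) used to make $\chi^{\gamma}$ itself globally defined, and therefore glues directly to a global primitive on $M_2^{\gamma}$ with no partition of unity needed.
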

 \begin{proof}
 Let $P=(\hat{A}_{\hb_1}\text{Ch}\text{Ch}_{\phi})_{n-1}$. Then,
\[
\begin{split}
 \chi^{\gamma}(1)={(-1)}^{n-1}\psi_{2n-2}((A)^{2n-2})
 &=\frac{{(-1)}^{n-1}}{(2n-2)!}(\text{ev}_1\Psi_{2n-2})(A^{2n-2})\\
 &={(-1)}^{n-1}(\text{ev}_1\Psi_{2n-2})(A \wedge ...\wedge A) \text{.}
\end{split}
\]
  By Theorem ~\ref{five}, it follows that modulo exact forms,
 \begin{eqnarray*}
  \chi^{\gamma}(1)= \chi(P)(A \wedge .... \wedge A)\text{.}
  \end{eqnarray*}
  Hence, for any vector fields $\xi_1,...,\xi_{2n-2}$ on $M_2^{\gamma}$,
  \begin{eqnarray*}
  \chi^{\gamma}(1)(\xi_1,...,\xi_{2n-2})=\chi(P)(A(\xi_1) \wedge ... \wedge A(\xi_{2n-2}))\\
 =\frac{1}{(n-1)!}\sum_{\sigma(2i-1)<\sigma(2i)} {(-1)}^{\sigma}P(C(A(\xi_{\sigma(1)}),A(\xi_{\sigma(2)})),...,C(A_{\sigma(2n-3)},A_{\sigma(2n-2)}))\text{.}
 \end{eqnarray*}
 As in ~\cite{FFS}, $A$ may be chosen so that $\text{pr}(A)=0$. In this case, for smooth vector fields $\xi,\eta$ on $M$,
 $$C(A(\xi),A(\eta))=-\text{pr}[A(\xi),A(\eta)]_{\hb_1}= -\text{pr}(\nabla A(\xi,\eta)+[A(\xi),A(\eta)]_{\hb_1})= (\Theta-R_T-R_N)(\xi,\eta)\text{.}$$
 The last equality above is due to ~\eqref{3.3}.
 It follows that
 \begin{eqnarray*}
 \chi^{\gamma}(1)(\xi_1,...,\xi_{2n-2})&&= \frac{1}{(n-1)!}\sum_{\sigma(2i-1)<\sigma(2i)}
 {(-1)}^{\sigma}P(C(A(\xi_{\sigma(1)}),A(\xi_{\sigma(2)})),...\\
 &&\hspace{5cm}...,C(A_{\sigma(2n-3)},A_{\sigma(2n-2)}))\\
 &&=\frac{1}{(n-1)!}\sum_{\sigma(2i-1)<\sigma(2i)} {(-1)}^{\sigma} P((\Theta-R_T-R_N)(\xi_{\sigma(1)},\xi_{\sigma(2)}),...\\
 &&\hspace{5cm}...,(\Theta-R_T-R_N)(\xi_{\sigma(2n-3)},\xi_{\sigma(2n-2)}))\\
 &&=\frac{1}{(n-1)!} P((\Theta-R_T-R_N)^{n-1})(\xi_1,...,\xi_{2n-2})
\text{.}
 \end{eqnarray*}
 Recall that for any $X \in  \mathfrak{sp}_{2n-2}(\K) \oplus \mathfrak{gl}_{N}(\K) \oplus \mathfrak{u}_1(\K)$, $$\frac{1}{(n-1)!}P(X,..,X)=P(X)$$ where the element $P$ of $(S^{n-1}\mathfrak{h}^*)^{\mathfrak{h}}$ is being viewed as a linear form $\mathfrak{h}^{\otimes n-1}$ in the left hand side and a polynomial function on $\mathfrak{h}$ in the right hand side of the above equation. Hence, modulo exact forms,
 $$\chi^{\gamma}(1)= (\hat{A}_{\hb_1}({R_T})\text{Ch}(-{\Theta})\text{Ch}_{\phi}(R_N))_{n-1}=\hb_1^{n-1}(\hat{A}(R_T)\text{Ch}(-\frac{\Theta}{\hb_1})\text{Ch}_{\phi}(\frac{R_N}{\hb_1}))_{n-1}\text{.} $$
 This completes the proof of the desired result.
 \end{proof}

\subsection{Proof of Proposition~\ref{qism}.}

We will prove the proposition for the case when
$M^{\gamma}=M_2^{\gamma}$. The general case is proven in a
completely similar fashion (with some extra notation to keep track
of). For this case, we shall denote $i_2$ by $i$. Let
$j:M^-=M\setminus M^\gamma \rar M$ denote the natural inclusion. Let
$\text{C}_{\bullet}^{sh}(\Aad((\hb_1))((\hb_2)))$ denote the
sheafification of the complex of presheaves
$\text{C}_{\bullet}(\Aad((\hb_1))((\hb_2)))$. We note that
$j^*\text{C}_{\bullet}^{sh}(\Aad((\hb_1))((\hb_2)))=\text{C}_{\bullet}^{sh}(\Aa_{M^-}((\hb_1))((\hb_2)))$
where $\Aa_{M^-}((\hb_1))((\hb_2))$ is the Fedosov quantization of
the sheaf of smooth functions on $M^-$ with Weyl curvature $\omega$.
One can apply the trace density construction of ~\cite{EF} to extend
the construction in ~\cite{FFS} to a map of complexes
$\chi_{FFS}:\text{C}_{\bullet}^{sh}(\Aa_{M^-}((\hb_1))((\hb_2)))
\rar \Omega^{2n-\bullet}_{M^-}((\hb_1))((\hb_2))$. One has the
following composite map in the derived category
$\text{D}(\text{Sh}_{\K}(M))$ of sheaves of $\K$-vector spaces on
$M$, which we shall denote by $\mu^{sh}$.
 \begin{eqnarray*}
 \text{C}_{\bullet}^{sh}(\Aad((\hb_1))((\hb_2))) \rar Rj_*\text{C}_{\bullet}^{sh}(\Aa_{M^-}((\hb_1))((\hb_2))) \stackrel{Rj_*\chi_{FFS}}{\rar} Rj_*\Omega_{M^-}^{2n-\bullet}((\hb_1))((\hb_2))\\
 Rj_*\Omega_{M^-}^{2n-\bullet}((\hb_1))((\hb_2)) \cong Rj_*\underline{\K}_{M^-}[2n] \cong \underline{\K}[2n]
 \end{eqnarray*}
The last isomorphism in $\text{D}(\text{Sh}_{\K}(M))$ is because
$\underline{\K}$ is an injective sheaf on $M^-$ (which implies that
$Rj_*\underline{\K}_{M^-} \cong R^0j_*\underline{\K}_{M^-}$) and
also because $M_2^{\gamma}$ is of codimension $2$ (which implies
that $R^0j_*\underline{\K}_{M^-} \cong \underline{\K}$). As a
result, in the derived category $\text{D}(\text{PrSh}_{\K}(M))$ of
presheaves of $\K$-vector spaces on $M$, one obtains the following
composite map, which shall be denoted by $\mu$,
$$\text{C}_{\bullet}(\Aad((\hb_1))((\hb_2))) \rar  \text{C}_{\bullet}^{sh}(\Aad((\hb_1))((\hb_2))) \rar \underline{\K}[2n]. $$
Here, the first arrow is sheafification and the second arrow is forgetful functor applied to $\mu^{sh}$. Proposition~\ref{qism} is implied by the following stronger proposition.

\begin{prop} \la{qism2} $\mu \oplus \lambda: \text{C}_{\bullet}(\Aad((\hb_1))((\hb_2))) \rar \underline{\K}[2n] \oplus i_*\underline{\K}_{M_2^{\gamma}}[2n-2]$ is an isomorphism in $\text{D}(\text{PrSh}_{\K}(M))$.
\end{prop}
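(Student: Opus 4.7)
The plan is to check that $\mu \oplus \lambda$ is a quasi-isomorphism of complexes of presheaves by restricting to sections over each member of the base $\mathcal B_{\epsilon}$, splitting into two cases. For $V \in \mathcal B_{\epsilon}$ with $V \subset M^-$, the sheaf $i_*\underline{\K}_{M_2^{\gamma}}$ has no sections over $V$, so $\lambda|_V = 0$ and only $\mu|_V$ needs analysis. On such $V$ the presheaf $\Aad((\hb_1))((\hb_2))$ coincides with the Fedosov quantization $\Aa_{M^-}((\hb_1))((\hb_2))$, and using $R^0j_*\underline{\K}_{M^-} \cong \underline{\K}$ together with the vanishing of higher direct images (since $M_2^{\gamma}$ has codimension $\geq 2$), $\mu|_V$ reduces to the trace density $\chi_{FFS}$ of ~\cite{EF},~\cite{FFS}. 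That this is a quasi-isomorphism from the Hochschild chain complex of sections to $\underline{\K}_V[2n]$ is a classical result for Fedosov algebras.

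For $V = N_{\alpha,U}$ with $U \subset M_2^{\gamma}$ a connected open subset and $\alpha \leq \epsilon$, sections of the target give $\K$ in degrees $-2n$ and $-(2n-2)$. To compute the Hochschild homology of $\Gamma(V, \Aad((\hb_1))((\hb_2)))$, I would first filter by powers of $\hb_2$ as in the proof of Proposition~\ref{p8}; the associated graded is the $\Z_2$-invariant part of a Fedosov quantization of $\text{C}^{\infty}(N_{\alpha,U})$, for which the sheaf-level Hochschild homology can be computed by the localisation technique of Section~3 of ~\cite{Te}. Using Proposition~\ref{locdarb} to trivialize the Fedosov connection locally, the algebra takes the tensor-product form of Section~2.3. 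A Künneth argument combined with Theorem~\ref{one} (Weyl-algebra factor) and Proposition~\ref{p2} (Dunkl-Weyl factor), followed by passage through the complete convergence theorem, yields $\HH_\bullet$ concentrated in degrees $2n-2$ and $2n$, each contributing $\text{H}^0(U, \K) \cong \K$.

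It remains to identify $\mu|_V$ as the projection onto the degree-$(-2n)$ summand and $\lambda|_V$ as the projection onto the degree-$(-(2n-2))$ summand. For $\mu|_V$ this is transparent from its construction via $Rj_* \chi_{FFS}$. For $\lambda|_V$, use the explicit local trace density formula of Section~4.2.1: equation~\eqref{localf2} exhibits a Hochschild $2n-2$-cycle pairing to $1 \in \K$ under $\chi^{\gamma}$, proving that $\lambda|_V$ hits the generator of the transverse $\dk((\hb_1))((\hb_2))$-homology. The principal obstacle will be the careful bookkeeping needed to run the $\hb_2$-adic spectral sequence, the Künneth decomposition, and the presheaf-sheaf translation simultaneously, and to verify that the classes detected by $\mu$ and $\lambda$ survive to $E^\infty$ and span it; this will rely on $\c((\hb_1))((\hb_2))$-linearity of the trace densities and the explicit formulas of Theorem~\ref{two} and equation~\eqref{localtr1}.
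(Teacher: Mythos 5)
Your strategy (check the claim over the base $\mathcal B_{\epsilon}$; split into $V \subset M^-$ and $V = N_{\alpha,U}$; in the second case identify the source with $\underline{\K}_V[2n] \oplus i_*\underline{\K}_U[2n-2]$ and match $\mu \oplus \lambda$ with the obvious isomorphism) is the same as the paper's. Case 1 is essentially identical. In Case 2 your route through the $\hb_2$-filtration, Proposition~\ref{locdarb}, and a K\"unneth argument expands a step the paper compresses into a single citation of Proposition~\ref{p2}; that is fine, and arguably more careful. But there is a real gap in the last step.

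When you say that $\mu|_V$ is ``the projection onto the degree-$(-2n)$ summand'' and $\lambda|_V$ is ``the projection onto the degree-$(-(2n-2))$ summand,'' you are asserting that the $2 \times 2$ matrix of derived morphisms from $\underline{\K}_V[2n] \oplus i_*\underline{\K}_U[2n-2]$ to itself is diagonal with nonzero diagonal entries. You supply arguments for the nonvanishing of the diagonal entries, but you never address the vanishing of the off-diagonal ones, and this is not automatic. The off-diagonal component $i_*\underline{\K}_U[2n-2] \to \underline{\K}_V[2n]$ is an element of $\text{Ext}^2_{\text{Sh}_{\K}(V)}(i_*\underline{\K}_U,\underline{\K}_V)$, which does not die for degree reasons. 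The paper kills it by first using the isomorphism $\underline{\K}_V \cong Rj_*\underline{\K}_{V\setminus U}$ in $\text{D}(\text{Sh}_{\K}(V))$ (this is where the codimension-$2$ hypothesis enters), then applying the $(j^*, Rj_*)$ adjunction to land in $\text{D}(\text{Sh}_{\K}(V\setminus U))$, where $j^*i_*\underline{\K}_U = 0$ because $U$ and $V\setminus U$ are disjoint. Your remark that this is ``transparent from the construction via $Rj_*\chi_{FFS}$'' gestures at exactly this phenomenon, but the adjunction computation is the actual content and must be made explicit. (The other off-diagonal component, $\underline{\K}_V[2n] \to i_*\underline{\K}_U[2n-2]$, sits in a negative $\text{Ext}$ and therefore does vanish automatically --- the paper records it using the $(i^*,i_*)$-adjunction --- but it should be noted.) Once these Hom-vanishings are supplied, your argument for the two diagonal nonvanishings --- restriction along $j^*$ for $\mu|_V$, and Proposition~\ref{locdarb} together with equation~\eqref{localf2} for $\lambda|_V$ --- coincides with the paper's.
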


\begin{proof}
It suffices to verify that for sufficiently small open subsets $V$ of $M$, $(\mu \oplus \lambda)|_{V}$ induces an isomorphism on homologies. We will do this case by case.\\

{\it Case 1: $V \subset M^-$.} In this case, $i_*\underline{\K}_{M_2^{\gamma}}|_{V}=0$. We therefore, need to verify that $\mu|_{V}:\text{C}_{\bullet}(\Aad((\hb_1))((\hb_2)))|_{V} \rar \underline{\K}[2n]|_{V}$ induces an isomorphism on homologies. But the restriction $|_{V}$ factors through $j^*$. Hence, $\mu|_{V}$ coincides with the composite map
$$ \text{C}_{\bullet}(\Aad((\hb_1))((\hb_2)))|_{V} \rar \text{C}_{\bullet}(\Aa_{M^-}((\hb_1))((\hb_2)))|_{V} \stackrel{\chi_{FFS}}{\rar} \Omega^{2n-\bullet}_{M^-}((\hb_1))((\hb_2))|_V $$ in $\text{D}(\text{PrSh}_{\K}(V))$.
The first arrow in the above composition is a term by term isomorphism of complexes of presheaves on $V$. The second arrow is known to be a quasi-isomorphism. The reader may see ~\cite{EF} for the analogous assertion for the Hochschild chain complex of the sheaf of holomorphic differential operators on a compact, complex manifold. Also, a cyclic homology analog of this assertion is available in Section 5.2 of ~\cite{W}.\\

{\it Case 2: $V = N_{\alpha,U}$ for a sufficiently small open subset $U$ of $M_2^{\gamma}$.} This case is a little more involved. By Proposition~\ref{p2}, $ \text{C}_{\bullet}(\Aad((\hb_1))((\hb_2)))|_{V}$ is quasi-isomorphic to $\underline{\K}_V[2n] \oplus i_*\underline{\K}_U[2n-2]$ as complexes of presheaves on $V$. Since sheafification is exact, $ \text{C}^{sh}_{\bullet}(\Aad((\hb_1))((\hb_2)))|_{V}$ is quasi-isomorphic to $\underline{\K}_V[2n] \oplus i_*\underline{\K}_U[2n-2]$ with the natural adjunction from $ \text{C}_{\bullet}(\Aad((\hb_1))((\hb_2)))|_{V}$ to $ \text{C}^{sh}_{\bullet}(\Aad((\hb_1))((\hb_2)))|_{V}$ inducing the identity on homologies. For the rest of this proof, we will identify $ \text{C}_{\bullet}(\Aad((\hb_1))((\hb_2)))|_{V}$ as well as its sheafification with $\underline{\K}_V[2n] \oplus i_*\underline{\K}_U[2n-2]$ wherever it is convenient for us to do so. With this identification, one may view $\mu^{sh}|_V$ as a map in $\text{D}(\text{Sh}_{\K}(V))$ from $\underline{\K}_V[2n] \oplus i_*\underline{\K}_U[2n-2]$ to $\underline{\K}_V[2n]$. Note that
$$\text{Hom}_{\text{D}(\text{Sh}_{\K}(V))}(i_*\underline{\K}_U[2n-2],\underline{\K}_V[2n])=\text{Hom}_{\text{D}(\text{Sh}_{\K}(V))}(i_*\underline{\K}_U[2n-2],j_*\underline{\K}_{V \setminus U}[2n])
$$ $$=\text{Hom}_{\text{D}(\text{Sh}_{\K}(V \setminus U))}(j^*i_*\underline{\K}_U[2n-2],\underline{\K}_{V \setminus U}[2n])=0 \text{ .}$$
It follows that the composite map
$$i_*\underline{\K}_U[2n-2] \rar i_*\underline{\K}_U[2n-2] \oplus \underline{\K}_V[2n] \stackrel{\mu^{sh}}{\rar} \underline{\K}_V[2n]$$
is zero in $\text{D}(\text{Sh}_{\K}(V))$. On the other hand, the composite map
$$\underline{\K}_V[2n] \rar i_*\underline{\K}_U[2n-2] \oplus \underline{\K}_V[2n] \stackrel{\mu^{sh}}{\rar} \underline{\K}_V[2n] $$
is nonzero. This is because, applying $j^*$ to the above composite gives the composite map
$$ \underline{\K}_{V \setminus U}[2n] \rar \text{C}^{sh}_{\bullet}(\Aa_{M^-}((\hb_1))((\hb_2)))|_{V\setminus U} \stackrel{\chi_{FFS}}{\rar} \Omega^{2n-\bullet}_{M^-}((\hb_1))((\hb_2))|_{V \setminus U} \cong \underline{\K}_{V \setminus U}[2n] $$ which is the identity if the first quasi-isomorphism above is suitably chosen. It follows that the composite map
$$\underline{\K}_V[2n] \rar i_*\underline{\K}_U[2n-2] \oplus \underline{\K}_V[2n] \stackrel{\mu}{\rar} \underline{\K}_V[2n] $$
is an isomorphism in $\text{D}(\text{PrSh}_{\K}(V))$ and the composite map
$$i_*\underline{\K}_U[2n-2] \rar i_*\underline{\K}_U[2n-2] \oplus \underline{\K}_V[2n] \stackrel{\mu}{\rar} \underline{\K}_V[2n]$$ is zero in
$\text{D}(\text{PrSh}_{\K}(V))$. \\

The remaining task is to check that the composite map
$$\underline{\K}_V[2n] \rar i_*\underline{\K}_U[2n-2] \oplus \underline{\K}_V[2n] \stackrel{\lambda}{\rar} i_*\underline{\K}_U[2n-2]$$ is zero and that the composite map
$$i_*\underline{\K}_U[2n-2] \rar i_*\underline{\K}_U[2n-2] \oplus \underline{\K}_V[2n] \stackrel{\lambda}{\rar} i_*\underline{\K}_U[2n-2]$$ is an isomorphism in $\text{D}(\text{PrSh}_{\K}(V))$. We first note that by its very construction, $\lambda$ factors through $\text{C}^{sh}_{\bullet}(\Aad((\hb_1))((\hb_2)))|_{V}$. We may therefore, verify these facts in $\text{D}(\text{Sh}_{\K}(V))$. For the first of the above two facts, note that
$$\text{Hom}_{\text{D}(\text{Sh}_{\K}(V))}(\underline{\K}_V[2n],i_*\underline{\K}_U[2n-2])=\text{Hom}_{\text{D}(\text{Sh}_{\K}(U))}(\underline{\K}_U[2n],\underline{\K}_U[2n-2])=0 \text{.}$$
The second fact is immediate from Proposition~\ref{locdarb} and equation ~\eqref{localf2} in the example provided in Section 4.2.1.
\end{proof}

\subsection{The other trace.} As in the previous subsection, we assume that $M^{\gamma}=M_2^{\gamma}$ in order to avoid notational complexities. Our methods in this subsection work in the general situation with trivial modifications. The map
$$ {\mathbb H}(\mu): {\mathbb H}^{-\bullet}(M, \text{C}_{\bullet}(\Aad((\hb_1))((\hb_2)))) \rar \text{H}^{2n-\bullet}(M,\K) $$
induced by $\mu$ on hypercohomologies is $\Z_2$ equivariant. In particular, by Proposition~\ref{p7}, it induces a map
$$ \chi_0: \text{HH}_{\bullet}(\mathfrak{A}_{M/\Z_2}((\hb_1))((\hb_2))) \rar \text{H}^{2n-\bullet}(M,\K)^{\Z_2} \text{.}$$
The degree $0$-component of this map $\chi_0:\text{HH}_{0}(\mathfrak{A}_{M/\Z_2}((\hb_1))((\hb_2))) \rar \text{H}^{2n}(M,\K)^{\Z_2}$ is another trace on $\mathfrak{A}_{M/\Z_2}((\hb_1))((\hb_2))$. Unfortunately, since the construction of $\chi_0$ is not completely explicit, we are not in a position to prove an algebraic index theorem computing $\chi_0(1)$. It would indeed be interesting to have an explicit index formula computing $\chi_0(1)$. We however, discuss some basic facts about $\chi_0$ which enable us to prove a proposition completing the proof of Theorem~\ref{four}.\\

We begin with the observation that $\mu$ restricts to a well defined map
\[
\text{C}_{\bullet}(\Aad((\hb_1))[[\hb_2]])
\stackrel{\mu}{\longrightarrow}
\underline{\c}((\hb_1))[[\hb_2]][2n]\ \text{in}\
\text{D}(\text{PrSh}_{\c((\hb_1))[[\hb_2]]}(M)).
\]
By Proposition~\ref{p8}, $\chi_0$ ``restricts" to a map of graded
$\c((\hb_1))[[\hb_2]]$-modules
$$ \chi_0: \text{HH}_{\bullet}(\mathfrak{A}_{M/\Z_2}((\hb_1))[[\hb_2]]) \rar \text{H}^{2n-\bullet}(M,\c((\hb_1))[[\hb_2]])^{\Z_2} \text{.}$$
One has an algebra homomorphisms
$\mathfrak{A}_{M/\Z_2}((\hb_1))[[\hb_2]] \rar \A_{M/\Z_2}((\hb_1))$
induced by the map $\c((\hb_1))[[\hb_2]] \rar \c((\hb_1))$ taking
$\hb_2$ to $0$. The following proposition asserts that $\chi_0$
``quantizes" $\chi_{FFS}$.

\begin{prop} \la{trqt}
The following diagram commutes.
$$\begin{CD}
\text{HH}_{\bullet}(\mathfrak{A}_{M/\Z_2}((\hb_1))[[\hb_2]]) @>{\hb_2 \mapsto 0}>> \text{HH}_{\bullet}(\A_{M/\Z_2}((\hb_1)))\\
@VV{\chi_0}V          @V{\chi_{FFS}}VV\\
\text{H}^{2n-\bullet}(M,\c((\hb_1))[[\hb_2]])^{\Z_2} @>{\hb_2 \mapsto 0}>> \text{H}^{2n-\bullet}(M,\c((\hb_1)))^{\Z_2}\\
\end{CD}$$
\end{prop}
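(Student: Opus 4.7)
The plan is to exploit the fact that on the open set $M^{-}:=M\setminus M^{\gamma}$, the product used to construct $\Aad((\hb_1))[[\hb_2]]$ is $\hb_2$-independent. Indeed, by Step 2 of Section 3.1, $j^*\Aad((\hb_1))[[\hb_2]] = \Aa_{M^-}((\hb_1))[[\hb_2]]$, where $\star_F$ is the Fedosov product on $\text{C}^{\infty}(M^-)$ with Weyl curvature $\omega$ and does not involve $\hb_2$. Consequently $\Aa_{M^-}((\hb_1))[[\hb_2]] \cong \Aa_{M^-}((\hb_1)) \otimes_{\c((\hb_1))} \c((\hb_1))[[\hb_2]]$ as sheaves of algebras on $M^-$, so the Engeli--Felder trace density $\chi_{FFS}$ at the $[[\hb_2]]$-level is the $\c((\hb_1))[[\hb_2]]$-linear extension of the classical trace density on $\Aa_{M^-}((\hb_1))$, and reducing it modulo $\hb_2$ recovers the classical $\chi_{FFS}$.

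It follows that the map $\mu$ of Section 4.4, restricted to the $[[\hb_2]]$-level, fits into a commutative square in the derived category of presheaves on $M$:
\[
\begin{CD}
\text{C}_{\bullet}(\Aad((\hb_1))[[\hb_2]]) @>{\mu}>> \underline{\c}((\hb_1))[[\hb_2]][2n]\\
@VVV  @VV{\hb_2 \mapsto 0}V\\
\text{C}_{\bullet}(\Aa((\hb_1))) @>{\mu'}>> \underline{\c}((\hb_1))[2n]
\end{CD}
\]
where $\Aa((\hb_1))$ denotes the presheaf of algebras obtained by reducing $\Aad((\hb_1))[[\hb_2]]$ modulo $\hb_2$ (whose $\Z_2$-invariant global sections give $\A_{M/\Z_2}((\hb_1))$), the left vertical arrow is induced by $\hb_2 \mapsto 0$, and $\mu'$ is defined by the same procedure as $\mu$ but using the classical FFS trace density on $\Aa_{M^-}((\hb_1))$ in the last step. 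Commutativity is essentially tautological, since both horizontal arrows agree on $M^-$ with applications of the same $\chi_{FFS}$, composed with the isomorphism $Rj_* \underline{\K}_{M^-} \cong \underline{\K}$ (respectively its $\hb_2 \mapsto 0$ analog).

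To extract the statement, I would take $\Z_2$-invariant hypercohomology of this square. By Proposition \ref{p8}, the $\mathbb{H}^{-\bullet}$ of the top row realizes $\text{HH}_{\bullet}(\mathfrak{A}_{M/\Z_2}((\hb_1))[[\hb_2]])$ mapping via $\chi_0$ to $\text{H}^{2n-\bullet}(M, \c((\hb_1))[[\hb_2]])^{\Z_2}$; similarly, the bottom row yields $\text{HH}_{\bullet}(\A_{M/\Z_2}((\hb_1)))$ mapping via $\chi_{FFS}$ to $\text{H}^{2n-\bullet}(M, \c((\hb_1)))^{\Z_2}$. The vertical arrows become $\hb_2 \mapsto 0$, giving the diagram in the statement.

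The main obstacle will be justifying the left vertical arrow rigorously, i.e., that the reduction modulo $\hb_2$ of $\text{C}_{\bullet}(\Aad((\hb_1))[[\hb_2]])$ is quasi-isomorphic (via the obvious map) to $\text{C}_{\bullet}(\Aa((\hb_1)))$. The cleanest route is via the $\hb_2$-adic filtration on both complexes and the corresponding spectral sequences, exactly parallel to the argument used in Proposition \ref{p8}: $\mu$ preserves filtrations, the induced map on $E^1$-pages coincides with the classical diagram (which commutes by naturality of the FFS construction), and both spectral sequences converge by the complete convergence theorem. One then obtains the desired commutativity after passing to hypercohomology.
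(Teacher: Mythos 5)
Your first commutative square is exactly the paper's first diagram (with your $\mu'$ playing the role of the paper's $\nu$), and the reasons you give for its commutativity are sound. But the proof as written has a genuine gap, and the place you flag as ``the main obstacle'' is not where the difficulty actually lies.

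The gap is in the sentence ``similarly, the bottom row yields $\text{HH}_{\bullet}(\A_{M/\Z_2}((\hb_1)))$ mapping via $\chi_{FFS}$ to $\text{H}^{2n-\bullet}(M,\c((\hb_1)))^{\Z_2}$.'' By construction, $\mu'$ (= the paper's $\nu$) is a composite that restricts chains to $M^-$, applies the Engeli--Felder density there, and then pushes forward through $Rj_*$ before identifying $Rj_*\underline{\c}_{M^-}\cong\underline{\c}_{M}$. The right vertical arrow in the proposition, by contrast, is the Engeli--Felder trace density $\chi_{FFS}$ defined on all of $M$ without any reference to $j$. These two maps are not tautologically the same: you still have to prove that $\mu'$ and $\chi_{FFS}$ induce the \emph{same} map on hypercohomology. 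The paper does this with a second commutative square
\[
\begin{CD}
\text{C}_{\bullet}(\Aa((\hb_1))) @>\mathrm{id}>> \text{C}_{\bullet}(\Aa((\hb_1)))\\
@VV{\chi_{FFS}}V  @VV{\nu^{sh}}V \\
\underline{\c}((\hb_1))[2n] @>>> Rj_*\underline{\c}((\hb_1))[2n]
\end{CD}
\]
(with the bottom arrow the unit of adjunction) plus the key observation that the induced map $\text{H}^{\bullet}(M,\c)\rar\mathbb{H}^{\bullet}(M,Rj_*\underline{\c})\cong\text{H}^{\bullet}(M,\c)$ is the identity. Without some version of this second step, your proof does not yield the diagram in the statement: it only compares $\chi_0$ to the map induced by $\mu'$, not to $\chi_{FFS}$.

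Conversely, the issue you identify as the main obstacle --- whether reduction modulo $\hb_2$ of $\text{C}_{\bullet}(\Aad((\hb_1))[[\hb_2]])$ is quasi-isomorphic to $\text{C}_{\bullet}(\Aa((\hb_1)))$ --- is not actually needed. The left vertical arrow in your square is just the chain map induced by the algebra homomorphism $\hb_2 \mapsto 0$; it is not a quasi-isomorphism and does not need to be. Once the square (and the missing comparison above) commute in the derived category of presheaves, applying $\Z_2$-invariant hypercohomology already gives the statement, with Proposition~\ref{p7}/\ref{p8} identifying the corners. The $\hb_2$-adic spectral sequence argument you sketch plays a role elsewhere in the paper but is not what is needed here.
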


\begin{proof} The construction of $\mu$ can be imitated step by step with $\Aad$ replaced by $\Aa$ to obtain a map
$\nu: \text{C}_{\bullet}(\Aa((\hb_1))) \rar \underline{\c}((\hb_1))[2n]$ in $\text{D}(\text{PrSh}_{\c((\hb_1))}(M))$. One clearly has the following commutative diagram in $\text{D}(\text{PrSh}_{\c((\hb_1))}(M))$.
$$\begin{CD}
\text{C}_{\bullet}(\Aad((\hb_1))[[\hb_2]]) @>{\hb_2 \mapsto 0}>> \text{C}_{\bullet}(\Aa((\hb_1)))\\
@VV{\mu}V    @V{\nu}VV\\
 \underline{\c}((\hb_1))[[\hb_2]][2n] @>{\hb_2 \mapsto 0}>> \underline{\c}((\hb_1))[2n]\\
 \end{CD}$$
Further, one also has the following commutative diagram in $\text{D}(\text{Sh}_{\c((\hb_1))}(M))$.
$$\begin{CD}
\text{C}_{\bullet}(\Aa((\hb_1))) @>\text{id}>> \text{C}_{\bullet}(\Aa((\hb_1)))\\
@VV{\chi_{FFS}}V     @V{\nu^{sh}}VV\\
\underline{\c}((\hb_1))[2n] @>>> Rj_*\underline{\c}((\hb_1))[2n]\\
\end{CD}$$
The bottom arrow in the above diagram is the natural adjunction map. The desired proposition follows from the two commutative diagrams described above once we observe that the map $\text{H}^{\bullet}(M,\c) \rar \mathbb{H}^{\bullet}(M,Rj_*\underline{\c}) \cong \text{H}^{\bullet}(M,\c)$ induced via the adjunction $\underline{\c} \rar Rj_*\underline{\c} \cong \underline{\c}$ is the identity.
\end{proof}

We remark that with Proposition~\ref{trqt}, one can see without difficulty that $\chi_0$ and $\chi^{\gamma}$ are linearly independent over $\c((\hb_1))((\hb_2))$. Further, note that the argument in the proof of Proposition~\ref{qism2} can be easily modified to show that $\mu \oplus \lambda$ yields a quasiisomorphism between $\text{C}_{\bullet}(\Aad((\hb_1))[[\hb_2]])$ and $\underline{\c}((\hb_1))[[\hb_2]][2n] \oplus i_*\underline{\c}_{M^{\gamma}}((\hb_1))[[\hb_2]][2n-2]$ in
$\text{D}(\text{PrSh}_{\c((\hb_1))[[\hb_2]]}(M))$. Together with Proposition~\ref{p8}, this implies that
the trace map $\chi_0: \text{HH}_{\bullet}(\mathfrak{A}_{M/\Z_2}((\hb_1))[[\hb_2]]) \rar \text{H}^{2n-\bullet}(M,\c((\hb_1))[[\hb_2]])^{\Z_2}$ is a surjection.
By Proposition~\ref{trqt}, one immediately has the first part of the following proposition (the second part being obvious). The reader is reminded that $\theta$ and $\theta_0$ have the same meaning as in the proof of Theorem~\ref{four}.

\begin{prop} \la{vdbf}
There is a $2n$-cycle $\theta$ of $\mathfrak{A}_{M/\Z_2}((\hb_1))((\hb_2))$ such that $[\theta_0]=(\hb_2 \mapsto 0)[\theta] \neq 0$. Further, the following diagram commutes:
$$\begin{CD}
\text{HH}_{\bullet}(\mathfrak{A}_{M/\Z_2}((\hb_1))[[\hb_2]]) @>>> \text{HH}_{\bullet}(\mathfrak{A}_{M/\Z_2}((\hb_1))((\hb_2)))\\
@VV{\chi_0}V  @V{\chi_0}VV\\
\text{H}^{2n-\bullet}(M,\c((\hb_1))[[\hb_2]])^{\Z_2} @>>> \text{H}^{2n-\bullet}(M,\c((\hb_1))((\hb_2)))^{\Z_2}\\
\end{CD}$$

\end{prop}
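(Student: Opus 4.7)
The first part of the statement can be obtained by combining two ingredients already in hand: the surjectivity of the restricted trace map
\[
\chi_0: \text{HH}_{\bullet}(\mathfrak{A}_{M/\Z_2}((\hb_1))[[\hb_2]]) \rar \text{H}^{2n-\bullet}(M,\c((\hb_1))[[\hb_2]])^{\Z_2},
\]
established just before the proposition by combining Proposition~\ref{p8} with the $[[\hb_2]]$-analogue of Proposition~\ref{qism2}, together with the compatibility statement of Proposition~\ref{trqt}. The plan is as follows. In degree $2n$, the target simplifies to $\text{H}^0(M,\c((\hb_1))[[\hb_2]])^{\Z_2}$, which contains the nonzero class $[1]$. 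By surjectivity I would choose a $2n$-cycle $\theta \in \text{C}_{2n}(\mathfrak{A}_{M/\Z_2}((\hb_1))[[\hb_2]])$ with $\chi_0([\theta]) = [1]$, then view $\theta$ as a cycle of $\mathfrak{A}_{M/\Z_2}((\hb_1))((\hb_2))$ via the obvious inclusion, and let $\theta_0$ denote its image under the reduction map $\hb_2 \mapsto 0$.

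To verify that $[\theta_0] \neq 0$ in $\text{HH}_{2n}(\A_{M/\Z_2}((\hb_1)))$, I would apply Proposition~\ref{trqt}: chasing $[\theta]$ around the square gives
\[
\chi_{FFS}([\theta_0]) = (\hb_2 \mapsto 0)(\chi_0([\theta])) = (\hb_2 \mapsto 0)[1] = [1] \neq 0,
\]
so $[\theta_0]$ cannot vanish, since $\chi_{FFS}$ factors through $\text{HH}_{2n}(\A_{M/\Z_2}((\hb_1)))$. This produces the required cycle and proves the first assertion.

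For the commutativity of the displayed diagram, the plan is to observe that both instances of $\chi_0$ arise by applying $\mathbb{H}^{-\bullet}(M,-)^{\Z_2}$ to the same underlying morphism $\mu$ in the derived category of presheaves of modules, taken over the respective base rings $\c((\hb_1))[[\hb_2]]$ and $\c((\hb_1))((\hb_2))$. The horizontal arrows are induced by $-\otimes_{\c((\hb_1))[[\hb_2]]}\c((\hb_1))((\hb_2))$, i.e.\ by inverting $\hb_2$. Each ingredient in the construction of $\mu$ --- formation of the Hochschild complex of $\Aad$, sheafification, the pushforward $Rj_*$ from $M^-$, the trace density $\chi_{FFS}$ of \cite{EF}, and the identification of $Rj_*\underline{\K}_{M^-}$ with $\underline{\K}$ --- is manifestly functorial in the scalar ring and commutes with localization at $\hb_2$, since $\c((\hb_1))((\hb_2))$ is obtained from $\c((\hb_1))[[\hb_2]]$ by flat base change. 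The main (though essentially routine) obstacle is to verify that this naturality passes cleanly through the derived-category identifications, particularly through the quasi-isomorphism $Rj_*\underline{\K}_{M^-} \cong \underline{\K}$; once this is done, the square of morphisms commutes already at the level of presheaf complexes, and applying $\mathbb{H}^{-\bullet}(M,-)^{\Z_2}$ yields the required commutativity on Hochschild homology.
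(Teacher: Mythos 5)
Your proposal is correct and takes essentially the same route as the paper. The paper's own argument for Proposition~\ref{vdbf} is compressed into the three sentences immediately preceding it: the $[[\hb_2]]$-analogue of Proposition~\ref{qism2} together with Proposition~\ref{p8} gives surjectivity of the restricted $\chi_0$; Proposition~\ref{trqt} then yields the first assertion; and the commutativity of the square is declared ``obvious.'' Your write-up simply unpacks this: choosing $\theta$ in degree $2n$ with $\chi_0([\theta])=[1]$ via surjectivity, and then chasing around the square of Proposition~\ref{trqt} to conclude $\chi_{FFS}([\theta_0])=[1]\neq 0$, hence $[\theta_0]\neq 0$. Your remarks on the second part --- that both vertical arrows come from the same morphism $\mu$ in the derived category of presheaves, and that each stage of the construction (Hochschild chains of $\Aad$, sheafification, $Rj_*$, $\chi_{FFS}$, and the identification of $Rj_*\underline{\K}_{M^-}$ with $\underline{\K}$) commutes with inverting $\hb_2$, using in particular the identification $\text{C}_{\bullet}(\Aad((\hb_1))((\hb_2)))\cong\varinjlim_p\hb_2^{-p}\text{C}_{\bullet}(\Aad((\hb_1))[[\hb_2]])$ already recorded in the proof of Proposition~\ref{p7} --- are exactly the kind of routine naturality check the authors elide with ``obvious.'' One small point worth making explicit: for $\theta_0=(\hb_2\mapsto 0)(\theta)$ to be defined at the level of chains, the cycle $\theta$ must be taken in $\text{C}_{2n}(\mathfrak{A}_{M/\Z_2}((\hb_1))[[\hb_2]])$ (as in the proof of Theorem~\ref{four}), which your construction indeed does, even though the statement's phrase ``a $2n$-cycle of $\mathfrak{A}_{M/\Z_2}((\hb_1))((\hb_2))$'' reads more loosely.
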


\end{document}